\newcommand{\R}{\mathbb{R}}
\newcommand{\N}{\mathbb{N}}
\newcommand{\Z}{\mathbb{Z}}
\numberwithin{equation}{section}
\newcommand{\Car}[0]{\operatorname{Car}}%{\mathcal{C}ar}
\newcommand{\prob}{\mathbb{P}}
\newcommand{\D}[0]{\mathcal{D}}
\newcommand{\eps}[0]{\varepsilon}
\newcommand{\car}[0]{\operatorname{Car}}
\newcommand{\mcar}[0]{\widetilde{\operatorname{Car}}}
\theoremstyle{plain}
\newtheorem{thm}[equation]{Theorem}
\newtheorem{lem}[equation]{Lemma}
\newtheorem{prop}[equation]{Proposition}
\theoremstyle{definition}
\theoremstyle{remark}
\newtheorem{rem}[equation]{Remark}
\author{Henri Martikainen}
\address[H.M.]{Department of Mathematics and Statistics, University of Helsinki, P.O.B. 68, FI-00014 Helsinki, Finland}
\email{henri.martikainen@helsinki.fi}
\thanks{Research of H.M. is supported by the Academy of Finland through the grant
Multiparameter dyadic harmonic analysis and probabilistic methods. The hospitality of M. Lacey and the School of Mathematics of Georgia Tech, where part of this research was carried,
is gratefully acknowledged by H.M}
\author{Mihalis Mourgoglou}
\address[M.M.]{Institut des Hautes Etudes Scientifiques, 91440 Bures-sur-Yvette, France}
\email{mourgoglou@ihes.fr}
\thanks{Research of M.M. is supported by the Institut des Hautes \'{E}tudes Scientifiques and was partially supported by the ANR project "Harmonic analysis at its boundaries" ANR-12-BS01-0013-01. The hospitality of T. Hyt\"{o}nen and the Department of Mathematics and Statistics, University of Helsinki, where part of this research was carried,
is gratefully acknowledged by M.M}
\subjclass[2010]{42B20}
\keywords{Square function, non-homogeneous analysis, local $Tb$, $L^q$ test functions}
\title[Boundedness of non-homogeneous square functions]{Boundedness of non-homogeneous square functions and $L^q$ type testing conditions with $q \in (1,2)$}
\begin{document}
\maketitle
\begin{abstract}
We continue the study of local $Tb$ theorems for square functions defined in the upper half-space $(\R^{n+1}_+, \mu \times dt/t)$. Here $\mu$ is allowed to be a non-homogeneous measure in $\R^n$.
In this paper we prove a boundedness result assuming local $L^q$ type testing conditions in the difficult range $q \in (1,2)$. Our theorem is a non-homogeneous version of a result of S. Hofmann
valid for the Lebesgue measure.
It is also an extension of the recent results of M. Lacey and the first named author where non-homogeneous local $L^2$ testing conditions have been considered.
\end{abstract}

\section{Introduction}
We study the boundedness of the vertical square function
\begin{displaymath}
Vf(x) = \left( \int_{0}^{\infty} |\theta_{t}f(x)|^{2} \, \frac{dt}{t} \right)^{1/2}.
\end{displaymath} 
Here the linear operators $\theta_{t}$, $t > 0$, have the form
\begin{equation} \theta_{t}f(x) = \int_{\R^{n}} s_{t}(x,y)f(y) \, d\mu(y). 
\label{Intro:theta}
\end{equation}
The appearing measure $\mu$ is a Borel measure in $\R^n$ which is only assumed to satisfy, for some $m$, the upper bound 
\begin{equation*}\label{powerBound} \mu(B(x,r)) \lesssim r^{m}, \qquad x \in \R^{n}, \; r > 0.
\end{equation*}
Moreover, for some $\alpha > 0$, the kernels $s_{t}$ satisfy the size and continuity conditions
\begin{equation}\label{eq:size}
|s_t(x,y)| \lesssim \frac{t^{\alpha}}{(t+|x-y|)^{m+\alpha}}
\end{equation}
and
\begin{equation}\label{eq:yhol}
|s_t(x,y) - s_t(x,z)| \lesssim \frac{|y-z|^{\alpha}}{(t+|x-y|)^{m+\alpha}} \end{equation}
whenever $|y-z| < t/2$.

The following is our main theorem.
\begin{thm}\label{thm:main}
Let $q \in (1,2)$ be a fixed number. Assume that to every cube $Q \subset \R^n$ there is associated a test function $b_Q$
satisfying that
\begin{itemize}
\item[(1)] spt$\,b_Q \subset Q$;
\item[(2)] $\int_Q b_Q \,d\mu = \mu(Q)$;
\item[(3)] $\|b_Q\|_{L^q(\mu)}^q \lesssim\mu(Q)$;
\item[(4)] \begin{displaymath}
\int_Q \Big( \int_0^{\ell(Q)} |\theta_t b_Q(x)|^2 \frac{dt}{t} \Big)^{q/2} \,d\mu(x) \lesssim \mu(Q).
\end{displaymath}
\end{itemize}
Then we have that
\begin{displaymath}
\|V\|_{L^q(\mu) \to L^q(\mu)} \lesssim 1.
\end{displaymath}
\end{thm}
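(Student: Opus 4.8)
The plan is to run the probabilistic corona argument used for non-homogeneous local $Tb$ theorems for square functions (as in the work of M.\ Lacey and the first author) and to graft onto it the stopping-time refinement by which S.\ Hofmann reached exponents below~$2$. Thus we may assume $f\ge 0$ is bounded with compact support and $\Norm{f}{L^q(\mu)}=1$, and it suffices to bound $\int(Vf)^q\,d\mu$. We introduce the Nazarov--Treil--Volberg random dyadic grids $\D=\D(\omega)$ and discretise the $t$-average, $\int_0^\infty\frac{dt}{t}=\sum_{k\in\Z}\int_{2^{-k-1}}^{2^{-k}}\frac{dt}{t}$, attaching to a scale $t\sim 2^{-k}$ the generation $\D_k$ and, to a point $x$, the cube $Q_{x,t}\in\D_k$ containing it. A standard good/bad splitting, using the size and smoothness bounds \eqref{eq:size}--\eqref{eq:yhol}, shows that the part of the square function carried by $\bad$ cubes contributes a factor tending to $0$ with the goodness parameter and may be discarded after taking $\Exp_\omega$; this reduces the theorem to an estimate, uniform in $\omega$, for the discretised $\good$ square function on a fixed grid.

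Fix the grid. From a large cube $S_0\supset\supp f$ we build a stopping family $\mathcal S$: a cube $Q$ is a stopping child of $S\in\mathcal S$ if it is maximal subject to at least one of the conditions (a)~$\ave{f^q}_Q>\Lambda\,\ave{f^q}_S$; (b)~$\abs{\ave{b_S}_Q}<\delta$ or $\ave{\abs{b_S}^q}_Q>\Lambda$; (c)~$\Ave{\big(\int_0^{\ell(Q)}\abs{\theta_t b_S}^2\,\tfrac{dt}{t}\big)^{q/2}}_Q>\Lambda$. Hypothesis (3) and the weak type $(1,1)$ of the Hardy--Littlewood maximal function handle (a) and (b) in the usual accretivity manner, while (c) --- which is the extra device, in the spirit of Hofmann, that the range $q<2$ forces us to add --- is controlled by Chebyshev and testing condition (4) applied to $b_S$. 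For $\Lambda$ large and $\delta$ small this gives a Carleson packing $\sum_{S'\in\mathcal S,\ S'\subset R}\mu(S')\lesssim\mu(R)$. On each corona $\mathcal D_S=\{Q\subset S:\ Q\not\subset S'\text{ for any stopping child }S'\text{ of }S\}$ one then has $\ave{f^q}_Q\sim\ave{f^q}_S$, $\ave{b_S}_Q$ bounded below in modulus, and a local-square-function control on $b_S$.

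Next we decompose $f\mathbf 1_{S_0}=\sum_{S\in\mathcal S}f_S$, where $f_S$ is built from the martingale blocks $\Delta_Q f$ with $Q\in\mathcal D_S$ together with the stopping-cube averages, so that $f_S$ is essentially supported on $S$. We split $f_S=\pi_{b_S}+r_S$, where $\pi_{b_S}$ is the $b_S$-paraproduct --- morally $\lambda_S\,b_S$ with $\lambda_S$ locally equal to $\ave{f}_Q\,\ave{b_S}_Q^{-1}$ on $Q\in\mathcal D_S$, whose coefficients are \emph{uniformly} bounded by a multiple of $\ave{f^q}_S^{1/q}$ because of stopping condition (a) and Jensen --- and $r_S$ is $b_S$-cancellative at every $Q\in\mathcal D_S$. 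For $\pi_{b_S}$ one freezes $t$ to the generation of $Q_{x,t}$ and, up to tails controlled by \eqref{eq:size}--\eqref{eq:yhol}, dominates $V\pi_{b_S}(x)$ pointwise by $\ave{f^q}_S^{1/q}$ times a stopped local square function of $b_S$, which testing condition (4) places in $L^q(S)$ with mass $\lesssim\mu(S)$. For $r_S$ one uses the $b_S$-cancellation of each block together with the Hölder estimate \eqref{eq:yhol} to produce the familiar $2^{-\abs{j-k}\alpha}$ off-diagonal decay between the block scale $2^{-j}$ and the square-function scale $2^{-k}$; summing the Schur series gives an $L^2$ bound for $Vr_S$, which is turned into the required $L^q$ bound using the near-constancy of $\ave{f^q}$ on the corona and Hölder's inequality against $\mu(S)$. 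The upshot is $\int(Vf_S)^q\,d\mu\lesssim\ave{f^q}_S\,\mu(S)$. Finally, since only the corona containing $Q_{x,t}$ contributes non-negligibly to $\theta_t f(x)$ at scale $t$ --- the other coronas being governed by the off-diagonal decay above --- the pointwise inequality $(\sum_S a_S)^{q/2}\le\sum_S a_S^{q/2}$, valid because $q/2\le 1$, gives $\int(Vf)^q\,d\mu\lesssim\sum_{S\in\mathcal S}\int(Vf_S)^q\,d\mu\lesssim\sum_{S\in\mathcal S}\ave{f^q}_S\,\mu(S)$; the Carleson packing and the Carleson embedding theorem bound this by $\Norm{f}{L^q(\mu)}^q=1$, and averaging over $\omega$ finishes the proof.

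The genuinely delicate point is the per-corona square-function estimate in the range $q\in(1,2)$. Testing condition (4) is subcritical: it only controls an $L^{q/2}$-average --- not a norm --- of the local square function of $b_S$, so the clean $L^2$ Carleson-embedding argument available when $q\ge 2$ (where the per-cube square-function masses of $b_S$ automatically form a Carleson sequence) is not available. The remedy, following Hofmann, is to isolate through stopping condition (c) the part of each corona on which the truncated square function of $b_S$ is already large --- a part whose total Carleson mass is summable exactly by (4) --- and to estimate the remainder by a distributional, good-$\lambda$-type argument instead of by orthogonality. Keeping this new stopping time compatible with the accretivity stopping time and with the tails of $\theta_t$, while still closing the scheme with the Carleson packing, is the principal technical difficulty.
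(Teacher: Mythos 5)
Your plan transplants Hofmann's homogeneous scheme (three-condition stopping time including a cap on the truncated square function of $b_S$, corona decomposition of a general $f\in L^q$, paraproduct/cancellative split, good-$\lambda$) into the random-grid setting, but the steps you leave as ``standard'' are precisely the ones that break when $\mu$ is non-doubling and the only integrability of $b_S$ is $L^q$ with $q<2$. Concretely: for the cancellative part $r_S$ you propose a Schur-series $L^2$ bound for $Vr_S$ ``turned into the required $L^q$ bound by H\"older''. There is no $L^2$ information available anywhere -- $b_S$ is only in $L^q(\mu)$, the adapted martingale blocks $\Delta_Q f$ contain the factors $\langle f\rangle_Q/\langle b_S\rangle_Q\, b_S$, and with a non-doubling $\mu$ one cannot transfer the stopping-time control of $b_S$ on a cube to its children (this is exactly the obstruction the non-homogeneous theory has to fight); so neither the $L^2$ bound for $r_S$ nor the passage from $L^2$ to $L^q$ on a corona is justified. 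Likewise the good-$\lambda$/distributional remedy you invoke for the subcritical testing condition is a doubling-measure device; for general $\mu$ such arguments (and related aperture/localization tricks for square functions) are known to fail. Finally, your closing summation is wrong as stated: $\sum_{S\in\mathcal S}\langle f^q\rangle_S\,\mu(S)$ is \emph{not} $\lesssim\|f\|_{L^q(\mu)}^q$ for a Carleson stopping family (long chains of stopping cubes around a concentrated $f$ make it diverge logarithmically); Carleson embedding requires averages raised to an exponent $>1$, i.e. quantities like $\langle|f|\rangle_S^q\mu(S)$, and your per-corona bound for $r_S$ only produces $\langle f^q\rangle_S\mu(S)$.

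The paper avoids all of this by a different architecture, which is the missing idea in your proposal: one does not decompose a general $f\in L^q$ at all. After truncating to make $\|V\|_{L^q\to L^q}$ a priori finite, one proves an $L^q$ $T1$ theorem reducing everything to the Carleson-type quantity $\car_V(q,3)$, i.e. to estimating $V$ on functions with $|f|\le 1_{Q_0}$, and the bad-cube contribution is absorbed using the a priori bound. For bounded $f$ the twisted martingale square function estimate $\|\sum_{Q^a=F}\epsilon_Q\Delta_Qf\|_{L^q}^q\lesssim\mu(F)$ can be obtained by a non-homogeneous John--Nirenberg argument, the stopping time needs only two conditions (accretivity of $b_F$ and control of $\langle|b_F|^q\rangle$), no Hofmann-type stopping on the square function of $b_F$ is required, and the paraproduct closes by applying testing condition (4) to each $b_F$ together with the geometric decay $\sum_{F\in\mathcal F^j}\mu(F)\lesssim\tau^j\mu(Q^*)$ and the disjointness of each stopping generation. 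Without this reduction to bounded $f$ (or a proof of the full $L^q$ adapted square function bounds of Lacey--Martikainen type, which you do not supply), your scheme does not close.
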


\begin{rem}
Suppose we also have the $x$-continuity
\begin{equation}\label{eq:xhol}
|s_t(x,y) - s_t(z,y)| \lesssim \frac{|x-z|^{\alpha}}{(t+|x-y|)^{m+\alpha}} \end{equation}
whenever $|x-z| < t/2$. 
Then we have that
\begin{displaymath}
\|V\|_{L^2(\mu) \to L^2(\mu)} \lesssim 1.
\end{displaymath}
It should be noted that an example from \cite{MMO} shows that when dealing with the vertical square function (as we are here)
one cannot derive the $L^2(\mu)$ estimate from the $L^q(\mu)$ estimate without $x$-continuity. This fails even in the case that $\mu$ is the Lebesgue measure.
\end{rem}

Hofmann \cite{Ho} proved the $L^2$ boundedness of the square function
under these local $L^q$ testing conditions in the case that $\mu$ is the Lebesgue measure. In the non-homogeneous case Lacey and the first named author \cite{LM2}
proved the $L^2$ boundedness but only with local $L^2$ testing conditions. Our main theorem is an extension of these two state of the art results. Indeed, we consider general measures and general exponents
simultaneously. The aforementioned two references are the most obvious predecessors of our main theorem, but the whole story up to this point is rather long.

One can consider $Tb$ theorems at least for square functions and Calder\'on--Zygmund operators.
Then they can be global or local. And if they are local, they can be with the easier $L^{\infty}/\textup{BMO}/T^{2,\infty}$ type testing assumptions, or with the more general $L^s$, $s < \infty$, type assumptions.
Moreover, in the latter case the range of the exponents (in the Calder\'on--Zygmund world more than one set of testing functions appear) one can use is a very significant problem.
Lastly, the fact that whether one considers the homogeneous or
non-homogeneous theory is a major factor. All of these big story arcs are relevant for the context of the current paper. We now try to give at least some of the key references of local $Tb$ theorems.

The first local $Tb$ theorem, with $L^{\infty}$ control of the test functions and their images, is by Christ \cite{Ch}.
Nazarov, Treil and Volberg \cite{NTVa} proved a non-homogeneous version of this theorem. The point compared to the global $Tb$ theorems is as follows. The 
accretivity of a given test function $b_Q$ is only assumed on its supporting cube $Q$, i.e., $|\int_Q b_Q\,d\mu| \gtrsim \mu(Q)$. While in a global $Tb$ one needs
a function which is simultaneously accretive on all scales. But the remaining conditions are still completely scale invariant: $b_Q \in L^{\infty}(\mu)$ and $Tb_Q \in L^{\infty}(\mu)$.
This scale invariance of the testing conditions is the main thing one wants to get rid of.

The non-scale-invariant $L^s$ type testing conditions were introduced by Auscher, Hofmann, Muscalu, Tao and Thiele \cite{AHMTT}.
Their theorem is for perfect dyadic singular integral operators and the assumptions 
are of the form $\int_Q |b^1_Q|^p \lesssim |Q|$, $\int_Q |b^2_Q|^q \lesssim |Q|$, $\int_Q |Tb^1_Q|^{q'} \lesssim |Q|$ and $\int_Q |T^*b^2_Q|^{p'} \lesssim |Q|$, $1 < p, q \le \infty$.
Extending the result to general Calder\'on--Zygmund operators is complicated (it is almost done by now -- but not completely).
Hofmann \cite{Ho1} established the result for general operators but only assuming the existence of $L^{2+\epsilon}$ test functions mapping to $L^2$.
Auscher and Yang \cite{AY} removed the $\epsilon$ by proving the theorem in the sub-dual case $1/p + 1/q \le 1$. Auscher and Routin \cite{AR} considered the general case
under some additional assumptions. The full super-dual case $1/p + 1/q  > 1$ is by Hyt\"onen and Nazarov \cite{HN}, but  even then with the additional buffer assumption
$\int_{2Q} |Tb^1_Q|^{q'} \lesssim |Q|$ and $\int_{2Q} |T^*b^2_Q|^{p'} \lesssim |Q|$. 

This was the main story for the Calder\'on--Zygmund operators for doubling measures. For square functions the situation is a bit more clear with the need for only one exponent $q$.
The case $q = 2$ is implicit in the Kato square root papers \cite{HMc}, \cite{HLM}, \cite{AHLMT} and explicitly stated and proved in \cite{A} and \cite{Ho2}. The case $q > 2$ is weaker than this. The hardest case $q \in (1,2)$ is due to Hofmann \cite{Ho} as already mentioned.
Some key applications really need the fact that one can push the integrability of the test functions to $1+\epsilon$ (see again \cite{Ho}).

The non-homogeneous world is yet another story. The whole usage of these non-scale-invariant testing conditions is a huge source of problem in this context.
One reason lies in the fact that even if we have performed a stopping time argument which gives us that a fixed test function $b_F$ behaves nicely on a cube $Q$, for example that $\int_Q |b_F|^2\, d\mu \lesssim \mu(Q)$, we cannot say much what happens in the stopping children of $Q$. That is, in a stopping child $Q'$ of $Q$ we cannot use the simple argument
\begin{displaymath}
\int_{Q'} |b_F|^2 \,d\mu \le \int_{Q} |b_F|^2 \,d\mu \lesssim \mu(Q) \lesssim \mu(Q')
\end{displaymath}
which would only be available if $\mu$ would be doubling. The non-homogeneous case $q = 2$ for square functions is the very recent work of Lacey and the first named author \cite{LM2}.
The case $p=q=2$ for Calder\'on--Zygmund operators
is by the same authors \cite{LM1}. For relevant dyadic techniques see also the Lacey--V\"ah\"akangas papers \cite{LV} and \cite{LV1}, and Hyt\"onen--Martikainen \cite{HyM}. To recap the context, in this paper we consider non-homogeneous square functions and push $q$ to the range $q \in (1,2)$.

We still mention that the study of the boundedness of non-homogeneous square functions was initiated by the recent authors in \cite{MM}. This was a global $Tb$.
The key technique was the usage of good (in a probabilistic sense) Whitney regions.
A scale invariant local $Tb$ is by the current authors together with T. Orponen \cite{MMO}. In that paper we also study the end point theory, $L^p$ theory, and various
counter-examples (e.g. the failure of the change of aperture with general measures and the difference between conical and vertical square functions).

We conclude the introduction by a remark and setting up some notation.
\begin{rem}
If we define 
$$V_{\textup{loc}, q}= \Big[\mathop{\sup_{Q \subset \R^n}}_{Q\, \textup{cube}} \frac{1}{\mu(Q)}\int_Q \Big( \int_0^{\ell(Q)} |\theta_t b_Q(x)|^2 \frac{dt}{t} \Big)^{q/2}\,d\mu(x) \Big]^{1/q},$$
then $\|V\|_{L^q(\mu) \to L^q(\mu)} \lesssim 1 + V_{\textup{loc}, q}$, where the implicit constants depend on $n, m, \alpha$, the kernel constants and the constant in testing condition (3). In the proof we will not keep track of the dependence on anything else but $V_{\textup{loc}, q}$.

The local $Tb$ with $L^q$ testing conditions can be proved assuming only that $\mu(B(x,r)) \le \lambda(x,r)$ for some $\lambda\colon \R^n \times (0,\infty) \to (0,\infty)$ satisfying that
$r \mapsto \lambda(x,r)$ is non-decreasing and $\lambda(x, 2r) \le C_{\lambda}\lambda(x,r)$ for all $x \in \R^n$ and $r > 0$. In this case one only needs to replace the kernel estimates by
\begin{displaymath}
|s_t(x,y)| \lesssim \frac{t^{\alpha}}{t^{\alpha}\lambda(x,t) + |x-y|^{\alpha}\lambda(x, |x-y|)}
\end{displaymath}
and
\begin{displaymath}
|s_t(x,y) - s_t(x,z)| \lesssim \frac{|y-z|^{\alpha}}{t^{\alpha}\lambda(x,t) + |x-y|^{\alpha}\lambda(x, |x-y|)}
\end{displaymath}
whenever $|y-z| < t/2$. This is done in the global situation in \cite{MM}. Here we skip the required modifications.
Such formalism
lets one capture the doubling theory as a by-product, and allows some more general upper bounds than $r^m$.
\end{rem}

\subsection{Notation}
We write $A \lesssim B$, if there is a constant $C>0$ so that $A \leq C B$. We may also write $A \approx B$ if $B \lesssim A \lesssim B$. For a number $a$ we write
$a \sim 2^k$ if $2^k \leq a <2^{k+1}$.

We then set some dyadic notation. Consider a dyadic grid $\D$ in $\R^n$. For $Q, R \in \D$ we use the following notation:
\begin{itemize} 
\item $\ell(Q)$ is the side-length of $Q$;
\item $d(Q,R)$ denotes the distance between the cubes $Q$ and $R$;
\item $D(Q,R):=d(Q,R)+\ell(Q)+\ell(R)$ is the long distance;
\item $\widehat{Q}=Q\times (0, \ell(Q))$ is the Carleson box associated with $Q$;
\item $W_Q=Q\times (\ell(Q)/2, \ell(Q))$ is the Whitney region associated with $Q$;
\item $\text{ch}(Q)=\{Q' \in \D: Q' \subset Q, \ell(Q') = \ell(Q)/2\}$;
\item $\text{gen}(Q)$ is determined by $\ell(Q)=2^{\text{gen}(Q)}$;
\item $Q^{(k)} \in\D$ is the unique cube for which $\ell(Q^{(k)})=2^k \ell(Q)$ and $Q \subset Q^{(k)}$;
\item $\langle f \rangle_Q = \mu(Q)^{-1}\int_Q f\,d\mu$.
\end{itemize}

\section{Structure of the proof and basic reductions}
\subsection{Reduction to a priori bounded operators $V$}\label{reduction.apriori.bounded}
In this subsection we say the following. Suppose we have proved the $L^q(\mu)$ bound of Theorem \ref{thm:main}, i.e., the quantitative bound $\|V\|_{L^q(\mu) \to L^q(\mu)} \lesssim 1+V_{\textup{loc}, q}$,
under the additional a priori finiteness assumption $\|V\|_{L^q(\mu) \to L^q(\mu)} < \infty$. Then the $L^q(\mu)$ bound of Theorem \ref{thm:main} automatically follows without the a priori assumption.

To this end,
define $s^i_t(x,y) = s_t(x,y)$ if $1/i \le t \le i$, and $s_t^i(x,y) = 0$ otherwise. These kernels are clearly in our original class -- they satisfy \eqref{eq:size} and \eqref{eq:yhol} with kernel constants bounded
by those of $V$. Define
\begin{displaymath}
V_if(x) := \left( \int_{1/i}^{i} |\theta_{t}f(x)|^{2} \, \frac{dt}{t} \right)^{1/2} = \left( \int_{0}^{\infty} |\theta_{t}^if(x)|^{2} \, \frac{dt}{t} \right)^{1/2},
\end{displaymath}
where
\begin{displaymath} \theta_{t}^if(x) = \int_{\R^{n}} s_{t}^i(x,y)f(y) \, d\mu(y). 
\end{displaymath}
Let us note that the $V_i$ are bounded operators on $L^q(\mu)$. Let
\begin{displaymath}
M_{\mu}f(x) = \sup_{r>0} \frac{1}{\mu(B(x,r))} \int_{B(x,r)} |f|\,d\mu(y).
\end{displaymath}
This centred maximal function is a bounded operator on $L^p(\mu)$ for
every $p \in (1,\infty)$. Notice that $|\theta_t f(x)| \lesssim M_{\mu} f(x)$ for every $t > 0$ and $x \in \R^n$. Using this we see that
$\|V_i\|_{L^q(\mu) \to L^q(\mu)} \le [2 \log i]^{1/2} \|M_{\mu}\|_{L^q(\mu) \to L^q(\mu)} < \infty$. 

By monotone convergence we have that
\begin{align*}
\|Vf\|_{L^q(\mu)} &= \lim_{i \to \infty} \|V_if\|_{L^q(\mu)} \\ &
\le \limsup_{i \to \infty} \|V_i\|_{L^q(\mu) \to L^q(\mu)} \|f\|_{L^q(\mu)} \\
&\lesssim \limsup_{i \to \infty} (1+V_{\textup{loc}, q}^i) \|f\|_{L^q(\mu)} \\
&\le (1 + V_{\textup{loc}, q}) \|f\|_{L^q(\mu)}.
\end{align*}
So it suffices to prove Theorem \ref{thm:main} under the assumption $\|V\|_{L^q(\mu) \to L^q(\mu)} < \infty$ -- a piece of information that will be used purely in a qualitative way.

\subsection{Reduction to a q-Carleson estimate}
We begin by stating a $T1$ in $L^q(\mu)$ (the case $q = 2$ is in \cite{MM}). The proof of this $T1$ is indicated in Appendix \ref{App:T1}.
Define, say for $\lambda \geq 3$,
\begin{displaymath}
\car_V(q, \lambda) := \mathop{\sup_{Q \subset \R^n}}_{\textup{cube}} \Big[ \frac{1}{\mu(\lambda Q)} \int_{Q} \Big( \int_0^{\ell(Q)} |\theta_t 1_{Q}(x)|^2 \frac{dt}{t} \Big)^{q/2} \,d\mu(x)\Big]^{1/q}
\end{displaymath}
and
\begin{displaymath}
\mcar_V(q, \lambda) :=  \mathop{\sup_{Q \subset \R^n}}_{\textup{cube}} \Big[ \frac{1}{\mu(\lambda Q)} \int_{Q} \Big( \int_0^{\ell(Q)} |\theta_t 1(x)|^2 \frac{dt}{t} \Big)^{q/2} \,d\mu(x)\Big]^{1/q}.
\end{displaymath}
Then, for $q \in (1,2]$, we have that there holds that 
\begin{equation}\label{eq:Intro.qt1}
\|V\|_{L^q(\mu) \to L^q(\mu)} \le C_1(1+ \mcar_V(q,9)) \le C_2(1 + \car_V(q,3)).
\end{equation}
Assuming the existence of the $L^q$ test functions as in Theorem \ref{thm:main} we then prove that
\begin{equation}\label{Intro:Carleson}
\car_V(q,3) \le C_3 (1 +V_{\textup{loc}, q}) + C_2^{-1} \|V\|_{L^q(\mu) \to L^q(\mu)}/2.
\end{equation}
We call this the key inequality.
Combining \eqref{eq:Intro.qt1} and \eqref{Intro:Carleson} gives that
\begin{displaymath}
\|V\|_{L^q(\mu) \to L^q(\mu)} \le C (1 +V_{\textup{loc}, q})  +  \|V\|_{L^q(\mu) \to L^q(\mu)}/2
\end{displaymath}
ending the proof.

We will now start the proof of the key inequality \eqref{Intro:Carleson}.
This task is completed in Section \ref{section:nested}.
In Appendix \ref{App:T1} we indicate the proof of the $T1$ theorem in $L^q(\mu)$, i.e., the first estimate of \eqref{eq:Intro.qt1}.

\section{Random and stopping cubes/ Martingale difference operators}

\subsection{Random dyadic grids}
At this point we need to set up the basic notation for
random dyadic grids (these facts are essentially presented in this way by Hyt\"onen \cite{Hy}).

Let $\D_0$ denote the standard dyadic grid, consisting of all the cubes of the form $2^k(\ell + [0,1)^n)$, where $k \in \Z$ and $\ell \in \Z^n$. We also denote $\D_{0, k}=\{ Q \in \D_0: \ell(Q)=2^k \}$. A generic dyadic grid, parametrized by $w \in (\{0,1\}^n)^\Z$, is of the form $\D(w)= \cup_{k \in \Z} \D_k(w)$, where $\D_k(w)=\{Q_0+x_k(w): Q_0 \in \D_{0, k}\}$ and $x_k(w)=\sum_{j<k} w_j 2^j$.
The notation $Q_0 + w := Q_0 + \sum_{j<k} w_j 2^j$, $Q_0 \in \mathcal{D}_{0,k}$, is convenient.
We get random dyadic grids by placing the natural product probability measure $P_w$ on $(\{0,1\}^n)^\Z$ (thus the coordinate functions $w_j$ are independent and $P_w(w_j=\eta)=2^{-n}$ if $\eta \in \{0,1\}^n$). 

We fix the constant $\gamma \in (0,1)$ to be so small that
$$\gamma \leq \alpha/(2m+2\alpha) \quad \text{and} \quad m \gamma/(1- \gamma) \leq \alpha/4,$$ 
where $\alpha > 0$ appears in the kernel estimates and $m$ appears in $\mu(B(x,r)) \lesssim r^m$. A cube $R \in \mathcal{D}$ is called $\D$-bad if there exists another cube $Q \in \mathcal{D}$ so that $\ell(Q) \ge 2^r \ell(R)$ and $d(R, \partial Q) \le \ell(R)^{\gamma}\ell(Q)^{1-\gamma}$. Otherwise it is good. We denote the collections of good and bad cubes by $\D_{\textrm{good}}$ and $\D_{\textrm{bad}}$ respectively. The following properties are known (see e.g. \cite{Hy}).
\begin{itemize}
\item For a fixed $Q_0 \in \mathcal{D}_0$ the set $Q_0 + w$ depends on $w_j$ with $2^{j} < \ell(Q_0)$, while the goodness (or badness) of $Q_0 + w$ depends on $w_j$ with $2^{j} \ge \ell(Q_0)$. In particular, these notions are independent (meaning that
for any fixed $Q_0 \in \mathcal{D}_0$ the random variable $w \mapsto 1_{\textup{good}}(Q_0+w)$ and any random variable that depends only on the cube $Q_0+w$ as a set, like $w \mapsto \int_{Q_0+w} f\,d\mu$, are independent).
\item The probability $\pi_{\textrm{good}} := P_{w}(Q _0+ w \textrm{ is good})$ is independent of $Q_0 \in \mathcal{D}_0$. 
\item $\pi_{\textrm{bad}} :=1-\pi_{\textrm{good}} \lesssim 2^{-r \gamma}$, with the implicit constant independent of $r$.
\end{itemize}
The parameter $r \lesssim 1$ is a fixed constant which is at least so large that $2^{r(1-\gamma)} \ge 10$. 

The following lemma is stated without proof since the first part was proved on page 25 of \cite{Hy1} and the second is lemma 2.10 of \cite{LV}.
\begin{lem}\label{lemma.R<S(Q)} Let $Q \in \D$ and $R \in \D_{\textup{good}}$, and set $\theta(u):= \Big\lceil\frac{\gamma u +r}{1-\gamma}\Big\rceil$, $u \in \N$.
\begin{itemize}
\item[(1)] Assume $\ell(Q)<\ell(R)$. Let $\ell(R)/\ell(Q)=2^\ell$ and $D(Q,R)/\ell(R) \sim 2^j $ for $\ell\geq 1$ and $j \geq 0$. Then, there holds that 
$$R \subset Q^{(\ell+j+\theta(j))}.$$
\item[(2)] Assume $\ell(R)\leq \ell(Q)$. Let $\ell(Q)/\ell(R)=2^\ell$ and $D(Q,R)/\ell(Q) \sim 2^j $ for $\ell, j \geq 0$. Then there holds that
$$R \subset Q^{(j+\theta(j+\ell))}.$$
\end{itemize}
\end{lem}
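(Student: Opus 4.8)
The plan is to prove both parts by the same device: for the value of $k$ asserted in the statement I set $P := Q^{(k)}$ and use the goodness of $R$ to rule out the possibility that $R$ is not contained in $P$. All I will need about the long distance is the crude bound coming from its definition: since $D(Q,R) = d(Q,R) + \ell(Q) + \ell(R) \ge d(Q,R)$, and since $a \sim 2^j$ means $2^j \le a < 2^{j+1}$, one gets $d(Q,R) < 2^{j+1}\ell(R)$ in case (1) and $d(Q,R) < 2^{j+1}\ell(Q)$ in case (2). I will also use two elementary facts twice: if $P \in \D$ has $\ell(P) \ge \ell(R)$, then by dyadic nesting either $R \subset P$ or $R \cap P = \emptyset$, and in the latter case $d(Q,R) \ge d(R,P) = d(R,\partial P)$ because $Q \subseteq P$ and the distance from $R$ to the (closed) cube $P$ is attained on $\partial P$; and if in addition $\ell(P) \ge 2^r\ell(R)$, then $R$ being good forces $d(R,\partial P) > \ell(R)^\gamma \ell(P)^{1-\gamma}$ straight from the definition of goodness.

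For part (1) take $k := \ell + j + \theta(j)$ and $P := Q^{(k)}$, so $\ell(P) = 2^k\ell(Q) = 2^{k-\ell}\ell(R)$. Since $j$ is an integer, $k - \ell = j + \lceil(\gamma j + r)/(1-\gamma)\rceil = \lceil(j+r)/(1-\gamma)\rceil$, hence $(k-\ell)(1-\gamma) \ge j + r$ and, in particular, $k - \ell \ge r$; thus $\ell(P) \ge 2^r\ell(R) \ge \ell(R)$ and the dichotomy applies. If $R \not\subset P$, then $R \cap P = \emptyset$, and the facts above chain together:
\begin{displaymath}
\ell(R)^\gamma \ell(P)^{1-\gamma} < d(R,\partial P) \le d(Q,R) < 2^{j+1}\ell(R),
\end{displaymath}
whereas $\ell(R)^\gamma\ell(P)^{1-\gamma} = 2^{(k-\ell)(1-\gamma)}\ell(R) \ge 2^{j+r}\ell(R) \ge 2^{j+1}\ell(R)$ since $r \ge 1$ — a contradiction. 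Hence $R \subset Q^{(k)}$. Part (2) runs identically with $k := j + \theta(j+\ell)$, $P := Q^{(k)}$: now $\ell(R) = 2^{-\ell}\ell(Q)$, $\ell(P) = 2^k\ell(Q)$, and $k = \lceil(j+\gamma\ell+r)/(1-\gamma)\rceil$, so $k(1-\gamma) \ge j + \gamma\ell + r$. This gives $k + \ell \ge k \ge r$, i.e. $\ell(P) \ge 2^r\ell(R) \ge \ell(R)$, and if $R$ were disjoint from $P$ then $\ell(R)^\gamma\ell(P)^{1-\gamma} = 2^{k(1-\gamma) - \gamma\ell}\ell(Q) \ge 2^{j+r}\ell(Q) \ge 2^{j+1}\ell(Q) > d(Q,R) \ge d(R,\partial P)$, again contradicting goodness; so $R \subset Q^{(k)}$.

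There is no real obstacle here: this is the standard "a good cube lies either deep inside or far outside every much larger cube" principle, dressed up with a specific bookkeeping of exponents. The only points needing care are converting the $\sim 2^j$ normalization of the long distance into an honest inequality (not losing the factor $2$), checking that $R$ lying outside $P$ genuinely yields $d(Q,R) \ge d(R,\partial P)$, and observing that the definition $\theta(u) = \lceil(\gamma u + r)/(1-\gamma)\rceil$ is calibrated precisely so that one and the same inequality on $k$ simultaneously ensures $\ell(P) \ge 2^r\ell(R)$ (so goodness is applicable) and $2^{(k-\ell)(1-\gamma)} \ge 2^{j+1}$ (respectively $2^{k(1-\gamma)-\gamma\ell} \ge 2^{j+1}$) so that the $(1-\gamma)$-power of $\ell(P)$ dominates the distance — which works exactly because $0 < \gamma < 1$ and $r \ge 1$.
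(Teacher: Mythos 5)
Your proof is correct: the ceiling arithmetic, the applicability of goodness (since $\ell(Q^{(k)})\ge 2^r\ell(R)$), and the chain $d(R,\partial P)\le d(Q,R)\le D(Q,R)<2^{j+1}\ell(R)$ (resp. $2^{j+1}\ell(Q)$) against $\ell(R)^{\gamma}\ell(P)^{1-\gamma}\ge 2^{j+r}\ell(R)$ (resp. $2^{j+r}\ell(Q)$) all check out. The paper itself states this lemma without proof, citing \cite{Hy1} and \cite{LV}, and your argument is essentially the standard one given there, so there is nothing further to compare.
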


\subsection{Collections of stopping cubes}
Let $\mathcal{D}$ be a dyadic grid in $\R^n$ and let $Q^* \in \mathcal{D}$ be a fixed dyadic cube with $\ell(Q^*) = 2^s$.
Set $\mathcal{F}_{Q^*}^0 = \{Q^*\}$ and let $\mathcal{F}_{Q^*}^1$ consist of the maximal cubes $Q\in \D$, $Q \subset Q^*$, for which at least one of the following two conditions holds:
\begin{enumerate}
\item $| \langle b_{Q^*}\rangle_Q | < 1/2$;
\item $\langle |b_{Q^*}|^q\rangle_Q >  2^{q'+1}A^{q'}$.
\end{enumerate}
Here $A$ is a constant such that $\|b_R\|_{L^q(\mu)}^q \le A\mu(R)$ for every cube $R \subset \R^n$.

Next, we repeat the previous procedure by replacing $Q^*$ with a fixed $Q \in \mathcal{F}_{Q^*}^1$. The combined collection of stopping cubes resulting from this is called $\mathcal{F}_{Q^*}^2$. This is continued and we set $\mathcal{F}_{Q^*}=\bigcup_{j=0}^\infty \mathcal{F}_{Q^*}^j$
Finally, for every $Q \in \D$, $Q \subset Q^*$, we let $Q^\alpha \in \mathcal{F}_{Q^*}$ be the minimal cube $R \in \mathcal{F}_{Q^*}$ for which $Q \subset R$. 

\begin{lem} If $F \in \mathcal{F}^{j}_{Q^*}$ for some $j\geq 0$, then there holds that
\begin{equation}
\mathop{\sum_{S \in \mathcal{F}^{j+1}_{Q^*}}}_{S \subset F} \mu(S) \leq \tau \mu(F), \,\, \tau := 1 - \frac{1}{2}\frac{A}{(2A)^{q'}} \in (0,1).
\label{StoppingFk}
\end{equation}
\end{lem}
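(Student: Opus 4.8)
The plan is a standard Carleson-type packing argument resting on the three defining properties of the test function $b_F$: $\supp b_F \subset F$, $\int_F b_F \,d\mu = \mu(F)$, and $\|b_F\|_{L^q(\mu)}^q \le A\mu(F)$. We may assume $\mu(F) > 0$. Fix $F \in \mathcal{F}^j_{Q^*}$ and let $\mathcal{S}$ denote the collection of those $S \in \mathcal{F}^{j+1}_{Q^*}$ with $S \subset F$; by construction these are exactly the maximal dyadic cubes $S \subsetneq F$ for which $|\langle b_F \rangle_S| < 1/2$ or $\langle |b_F|^q \rangle_S > 2^{q'+1}A^{q'}$, and they are pairwise disjoint. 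Split $\mathcal{S} = \mathcal{S}_1 \sqcup \mathcal{S}_2$, where $\mathcal{S}_1$ consists of those $S$ with $|\langle b_F \rangle_S| < 1/2$; then every $S \in \mathcal{S}_2$ satisfies the second condition, i.e.\ $\mu(S) < (2^{q'+1}A^{q'})^{-1}\int_S |b_F|^q\,d\mu$. Set $G = F \setminus \bigcup \mathcal{S}$, so that $G$, $\bigcup \mathcal{S}_1$ and $\bigcup \mathcal{S}_2$ partition $F$. Summing the last inequality over the disjoint cubes of $\mathcal{S}_2$ and using $\|b_F\|_{L^q(\mu)}^q \le A\mu(F)$ already gives $\mu\bigl( \bigcup \mathcal{S}_2 \bigr) < \delta \mu(F)$ with $\delta := A(2^{q'+1}A^{q'})^{-1} = \tfrac12 A(2A)^{-q'}$, so the cubes where stopping was triggered by the $L^q$ size are packed with precisely the constant we want.

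The remaining cubes are handled through the cancellation identity $\mu(F) = \int_F b_F\,d\mu$, which I would split along the partition $F = G \cup \bigcup \mathcal{S}_1 \cup \bigcup \mathcal{S}_2$. Over $\bigcup \mathcal{S}_1$ the bound $|\langle b_F \rangle_S| < 1/2$ gives $\bigl| \int_{\bigcup \mathcal{S}_1} b_F\,d\mu \bigr| \le \tfrac12 \mu\bigl( \bigcup \mathcal{S}_1 \bigr)$; over $G \cup \bigcup \mathcal{S}_2$, H\"older's inequality together with $\|b_F\|_{L^q(\mu)}^q \le A\mu(F)$ gives $\bigl| \int_{G \cup \bigcup \mathcal{S}_2} b_F\,d\mu \bigr| \le (A\mu(F))^{1/q}\bigl( \mu(G) + \mu(\bigcup \mathcal{S}_2) \bigr)^{1/q'}$. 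Writing $g = \mu(G)/\mu(F)$ and $s_i = \mu(\bigcup \mathcal{S}_i)/\mu(F)$, so that $g + s_1 + s_2 = 1$, combining these two bounds with the identity and dividing by $\mu(F)$ collapses everything to $1 \le A^{1/q}(g+s_2)^{1/q'} + \tfrac12 s_1$; substituting $s_1 = 1 - g - s_2$ and setting $u = g + s_2$, this reads $\tfrac{1+u}{2} \le A^{1/q}u^{1/q'}$.

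To finish, observe that the constant $2^{q'+1}$ and the threshold $1/2$ in the stopping conditions were calibrated so that $A^{1/q}(2\delta)^{1/q'} = \tfrac12$; indeed $2\delta = 2^{-q'}A^{1-q'}$, whence $(2\delta)^{1/q'} = (2A^{1/q})^{-1}$. Since $t \mapsto A^{1/q}t^{1/q'}$ is increasing, the inequality $\tfrac{1+u}{2} \le A^{1/q}u^{1/q'}$ cannot hold when $u \le 2\delta$: the right side would then be at most $\tfrac12$ while the left side is at least $\tfrac12$, forcing $u = 0$ and the absurdity $0 \ge \tfrac12$. Hence $u > 2\delta$, and since $s_2 < \delta$ by the first paragraph, $g = u - s_2 > \delta$, so that $\sum_{S \in \mathcal{F}^{j+1}_{Q^*},\, S \subset F} \mu(S) = (1-g)\mu(F) < (1-\delta)\mu(F) = \tau \mu(F)$, which is the claim. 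I do not anticipate a genuine obstacle: the only points needing care are to keep the two stopping reasons bookkept separately — the second kind is controlled directly by the $L^q$ size of $b_F$, the first only after invoking the testing identity and H\"older — and to notice that the numerology ($1/2$ in the stopping condition versus $A^{1/q}(2\delta)^{1/q'} = 1/2$) has been arranged so that the two occurrences of $1/2$ line up.
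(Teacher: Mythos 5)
Your proof is correct and is essentially the paper's own argument in normalized form: the Chebyshev bound for the $L^q$-stopping cubes and the accretivity-plus-H\"older step for the low-average cubes are the same, and your scalar inequality $\tfrac{1+u}{2}\le A^{1/q}u^{1/q'}$ together with the calibration $A^{1/q}(2\delta)^{1/q'}=\tfrac12$ is exactly the paper's estimate $\mu\bigl(\bigcup_i Q^1_i\bigr)\le\bigl(1-A(2A)^{-q'}\bigr)\mu(F)$ rewritten in the variables $g,s_1,s_2$. The final combination $1-g\le(1-2\delta)+\delta=1-\delta=\tau$ coincides with the paper's addition of the two separate bounds.
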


\begin{proof}
Let $F \in \mathcal{F}_{Q^*}$. Consider a disjoint collection $\{Q^1_i\}_i \subset \D$ for which $Q^1_i \subset F$ and $| \langle b_{F}\rangle_{Q^1_i} | < 1/2$. We have that 
\begin{align*}
\mu(F)= \int_F b_F \,d\mu &=  \int_{F\setminus \bigcup_i Q^1_i} b_F \,d\mu + \sum_i \int_{Q^1_i} b_F \,d\mu\\
&\leq \mu \Big(F \setminus \bigcup_i Q^1_i \Big)^{1/q'}  \Big( \int_F|b_F|^q \,d\mu \Big)^{1/q} + \frac{1}{2} \sum_i \mu(Q^1_i)\\
& \leq A^{1/q}  \mu \Big(F \setminus \bigcup_i Q^1_i \Big)^{1/q'}  \mu(F)^{1/q} +\frac{1}{2}  \mu(F),
\end{align*}
which implies that
$$\mu(F) \leq \big(2 A^{1/q} \big)^{q'} \cdot \mu \Big(F \setminus \bigcup_i Q^1_i \Big) = \frac{\big(2 A \big)^{q'}}{A}   \Big[\mu(F) - \Big( \bigcup_i Q^1_i \Big)\Big].$$
Therefore, we obtain
$$\mu \Big(\bigcup_i Q^1_i \Big) \leq \Big(1-\frac{A}{(2A)^{q'}}\Big) \mu(F).$$

Next, we consider a disjoint collection $\{Q^2_i\}_i \subset \D$ for which $Q^2_i \subset F$ and $\langle |b_{F}|^q\rangle_{Q^2_i} >  2^{q'+1}A^{q'}$. Then, one may notice that
$$\mu \Big(\bigcup_i Q^2_i \Big)   \leq  2^{-q'-1}A^{-q'}  \int_F |b_F|^q \,d\mu \leq   \frac{1}{2}\frac{A}{(2A)^{q'}} \mu(F).$$ 
Combining the analysis we conclude that \eqref{StoppingFk} holds.
\end{proof}
The next lemma follows.
\begin{lem}\label{lem:mescar}
The following is a Carleson sequence: $\alpha_Q = 0$ if $Q$ is not from $\bigcup_j \mathcal{F}_{Q^*}^j$, and it equals $\mu(Q)$ otherwise.
This means that $\sum_{Q \subset R} a_Q \lesssim \mu(R)$ for every dyadic $R$.
\end{lem}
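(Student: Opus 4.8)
The plan is to reduce the Carleson condition $\sum_{Q \subset R} \alpha_Q \lesssim \mu(R)$ to the geometric-decay statement \eqref{StoppingFk} just established, together with the nesting structure of the stopping collections. First I would observe that it suffices to bound $\sum_{Q \subset R,\, Q \in \mathcal{F}_{Q^*}} \mu(Q)$ for each fixed $R \in \D$ with $R \subset Q^*$, since $\alpha_Q$ vanishes off $\bigcup_j \mathcal{F}_{Q^*}^j$. Fix such an $R$ and let $F_0 = R^{\alpha}$ be the minimal stopping cube containing $R$; every stopping cube $Q \in \mathcal{F}_{Q^*}$ with $Q \subset R$ is then contained in $F_0$, and is either $F_0$ itself (only if $F_0 = R$) or lies in $\mathcal{F}_{Q^*}^{j}$ for some $j$ strictly larger than the generation of $F_0$ in the stopping tree.

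The key step is to organize the sum by generations of the stopping tree \emph{below} $F_0$. Writing $\mathcal{F}^{(0)} = \{F_0\}$ and letting $\mathcal{F}^{(k)}$ denote the stopping cubes that are $k$-th generation descendants of $F_0$ within $\mathcal{F}_{Q^*}$ (i.e. $S \in \mathcal{F}^{(k)}$ iff $S \in \mathcal{F}_{Q^*}$, $S \subsetneq F_0$, and there are exactly $k-1$ stopping cubes strictly between $S$ and $F_0$), an iteration of \eqref{StoppingFk} gives
\begin{displaymath}
\mathop{\sum_{S \in \mathcal{F}^{(k)}}}_{S \subset F_0} \mu(S) \leq \tau^{k}\,\mu(F_0).
\end{displaymath}
Indeed, \eqref{StoppingFk} applied to each $F \in \mathcal{F}^{(k-1)}$ and summed over $F$ yields $\sum_{\mathcal{F}^{(k)}} \mu(S) \le \tau \sum_{\mathcal{F}^{(k-1)}} \mu(F)$, and one induces down from $k=0$. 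Summing the geometric series,
\begin{displaymath}
\mathop{\sum_{Q \subset R}}_{Q \in \mathcal{F}_{Q^*}} \mu(Q) \;\le\; \sum_{k \ge 0} \mathop{\sum_{S \in \mathcal{F}^{(k)}}}_{S \subset F_0} \mu(S) \;\le\; \frac{1}{1-\tau}\,\mu(F_0) \;\le\; \frac{1}{1-\tau}\,\mu(R),
\end{displaymath}
using $F_0 \subset R$ in the last step (if $F_0 \subsetneq R$ there simply is no stopping cube equal to or containing $R$ inside $R$, and if $F_0 = R$ the bound $\mu(F_0) \le \mu(R)$ is an equality). This is the required Carleson estimate with implicit constant $(1-\tau)^{-1}$, which depends only on $q$ and the testing constant $A$ through $\tau$.

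The one genuinely delicate point is bookkeeping the stopping tree correctly: a single set-theoretic cube $Q$ could in principle be re-selected as a stopping cube in the construction rooted at different parents, but by the definition of $\mathcal{F}_{Q^*}$ as the disjoint union over $j$ of the collections $\mathcal{F}_{Q^*}^j$, and the maximality built into each step, each stopping cube has a well-defined stopping parent and hence a well-defined generation, so the decomposition $\mathcal{F}_{Q^*} \cap \{Q \subset F_0\} = \bigsqcup_k \mathcal{F}^{(k)}$ is genuinely a partition and the iteration of \eqref{StoppingFk} is legitimate. Everything else is the summation of a geometric series, so I expect no real obstacle beyond making this tree structure precise.
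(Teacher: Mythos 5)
Your core mechanism is right: iterating \eqref{StoppingFk} over the generations of the stopping tree below a fixed stopping cube $F$ gives $\sum_{S}\mu(S)\le\tau^{k}\mu(F)$ for the $k$-th generation descendants, and summing the geometric series bounds the total mass of all stopping cubes inside $F$ by $(1-\tau)^{-1}\mu(F)$. But your final reduction is flawed. You set $F_0=R^{\alpha}$, the \emph{minimal stopping cube containing} $R$, so by definition $R\subset F_0$, not $F_0\subset R$ as you invoke in the last step; consequently the chain $\sum_{Q\subset R,\,Q\in\mathcal{F}_{Q^*}}\mu(Q)\le (1-\tau)^{-1}\mu(F_0)\le(1-\tau)^{-1}\mu(R)$ breaks down precisely in the generic case $R\subsetneq F_0$, where $\mu(F_0)$ can be vastly larger than $\mu(R)$ (the measure is non-doubling, so no comparison between a cube and a much larger cube containing it is available -- this is exactly the pitfall the introduction of the paper warns about). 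Your parenthetical treats the impossible case $F_0\subsetneq R$ and the trivial case $F_0=R$, but never the case that actually needs an argument. As written, your proof only establishes the Carleson bound when $R$ is itself a stopping cube.

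The repair is small and standard: given an arbitrary dyadic $R$, let $\mathcal{M}(R)$ be the collection of \emph{maximal} stopping cubes contained in $R$. These are pairwise disjoint subsets of $R$, so $\sum_{S\in\mathcal{M}(R)}\mu(S)\le\mu(R)$, and every stopping cube contained in $R$ lies below a unique $S\in\mathcal{M}(R)$ in the stopping tree. Applying your geometric-series estimate rooted at each $S$ (which is where your generation bookkeeping is correct) gives
\begin{displaymath}
\mathop{\sum_{Q\subset R}}_{Q\in\mathcal{F}_{Q^*}}\mu(Q)\;\le\;\sum_{S\in\mathcal{M}(R)}\frac{1}{1-\tau}\,\mu(S)\;\le\;\frac{1}{1-\tau}\,\mu(R),
\end{displaymath}
which is the desired Carleson estimate; the case $Q^*\subsetneq R$ is then immediate since all stopping cubes lie in $Q^*\subset R$. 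The paper states the lemma without proof as a direct consequence of \eqref{StoppingFk}, and this maximal-cube decomposition is the intended one-line bridge that your write-up is missing.
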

We now state the classical Carleson embedding theorem.
\begin{prop}
Given a Carleson sequence $(A_Q)_{Q\in \D}$ we have for every $f \in L^p(\mu)$, $1 < p < \infty$, that
$$\sum_{Q\in \D}  |\langle f \rangle_Q|^p A_Q \le C \|f\|^p_{L^p(\mu)}.$$
\end{prop}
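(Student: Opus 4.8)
The plan is to deduce the embedding from the $L^p(\mu)$-boundedness of the dyadic maximal function
$$M_{\D}f(x) := \sup_{Q \in \D,\, Q \ni x} \langle |f| \rangle_Q,$$
together with the elementary fact that the Carleson packing condition controls the $\mu$-measure of level sets.

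First I would record the two ingredients. One: for every $Q \in \D$ and every $x \in Q$ we have $|\langle f \rangle_Q| \le \langle |f| \rangle_Q \le M_{\D}f(x)$, so that, setting $g_Q := \inf_{x \in Q} M_{\D}f(x)$, there holds $|\langle f \rangle_Q| \le g_Q$. Two: $M_{\D}$ is of weak type $(1,1)$ with constant $1$ for $\mu$, with no doubling needed — the set $\{M_{\D}f > \lambda\}$ is a union of dyadic cubes, in fact the union of the maximal dyadic cubes $R_j$ on which $\langle |f| \rangle_{R_j} > \lambda$, and these are pairwise disjoint, whence $\mu(\{M_{\D}f > \lambda\}) = \sum_j \mu(R_j) \le \lambda^{-1} \sum_j \int_{R_j} |f| \, d\mu \le \lambda^{-1} \|f\|_{L^1(\mu)}$; interpolating with the trivial $L^\infty$ bound gives $\|M_{\D}f\|_{L^p(\mu)} \lesssim_p \|f\|_{L^p(\mu)}$ for $1 < p < \infty$.

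Then I would run the layer-cake computation. Using $g_Q^p = p\int_0^\infty \lambda^{p-1} 1_{\{g_Q > \lambda\}} \, d\lambda$ and Tonelli,
$$\sum_{Q \in \D} |\langle f \rangle_Q|^p A_Q \le \sum_{Q \in \D} g_Q^p A_Q = p \int_0^\infty \lambda^{p-1} \Big( \sum_{Q :\, g_Q > \lambda} A_Q \Big) \, d\lambda.$$
The one point requiring care is the bound $\sum_{Q :\, g_Q > \lambda} A_Q \le C \mu(E_\lambda)$ with $E_\lambda := \{M_{\D}f > \lambda\}$: since $g_Q > \lambda$ forces $M_{\D}f > \lambda$ throughout $Q$, every such $Q$ satisfies $Q \subseteq E_\lambda$; decomposing all dyadic cubes contained in $E_\lambda$ according to which maximal dyadic subcube $P_i$ of $E_\lambda$ contains them (the $P_i$ are disjoint with $\bigcup_i P_i \subseteq E_\lambda$; if no maximal cube exists one passes to an exhausting chain), and applying the Carleson condition on each $P_i$, gives $\sum_{Q \subseteq E_\lambda} A_Q = \sum_i \sum_{Q \subseteq P_i} A_Q \le C \sum_i \mu(P_i) \le C \mu(E_\lambda)$. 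Feeding this back in,
$$\sum_{Q \in \D} |\langle f \rangle_Q|^p A_Q \le C p \int_0^\infty \lambda^{p-1} \mu(E_\lambda) \, d\lambda = C \|M_{\D}f\|_{L^p(\mu)}^p \lesssim_p \|f\|_{L^p(\mu)}^p,$$
which is the assertion. Everything here is classical; the only genuinely substantive step is the conversion of the packing condition into the level-set estimate, and I would expect that to be the part a careful reader wants spelled out in full.
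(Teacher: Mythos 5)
Your argument is correct. The paper states this proposition without proof, as classical, so there is no in-paper argument to match it against verbatim; the closest comparison is the paper's proof of the generalized embedding, Proposition \ref{CarlesonEmbeddingThm}, and your proof runs on essentially the same machinery: there the stratification is by the dyadic heights $2^j$ of the averages $|\langle f\rangle_Q|$ (via the maximal cubes $R^i_j$ with $|\langle f\rangle_{R^i_j}|>2^j$), whereas you stratify by the level sets $E_\lambda=\{M^{\D}_\mu f>\lambda\}$ through a layer-cake integral and convert the packing condition into $\sum_{Q\subseteq E_\lambda}A_Q\lesssim\mu(E_\lambda)$ by passing to maximal (or exhausting) dyadic cubes inside $E_\lambda$; both versions then reduce to the $L^p(\mu)$ boundedness of the dyadic maximal operator, which indeed requires no doubling. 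Your handling of the two genuine technicalities (absence of maximal cubes when $\mu(\R^n)<\infty$, and the disjointification inside $E_\lambda$) is adequate, the only implicit hypothesis being $A_Q\ge 0$, which is how Carleson sequences are used in the paper.
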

\begin{rem} Note that $q$ is always reserved to be the fixed index $q \in (1,2)$ appearing in the testing conditions.
\end{rem}
The next proposition is a Carleson embedding on $L^p(\mu)$, where the Carleson condition itself depends on $p$. This kind of Carleson is also well-known, of course, but we state and prove this general version here for
the convenience of the reader.
\begin{prop}\label{CarlesonEmbeddingThm}
Let $\mathcal{D}$ be a dyadic grid in $\R^n$ and $p \in (1,2]$ be a fixed number.
Suppose that for every $Q \in \mathcal{D}$ we have a function $A_Q$ satisfying that spt$\,A_Q \subset Q$ and
\begin{equation}
\car_p( (A_Q)_{Q \in \mathcal{D}}) := \Big( \sup_{R \in \mathcal{D}} \frac{1}{\mu(R)} \int_R \Big[ \mathop{\sum_{Q \in \mathcal{D}}}_{Q \subset R} |A_Q(x)|^2\Big]^{p/2}\,d\mu(x) \Big)^{1/p} < \infty.
\label{eq:qCarlesonCondition}\end{equation}
Then we have that
\begin{equation}
\int \Big[ \sum_{Q \in \mathcal{D}} |\langle f \rangle_Q|^2 |A_Q(x)|^2 \Big]^{p/2}\,d\mu(x) \lesssim \car_p( (A_Q)_{Q \in \mathcal{D}})^p \|f\|_{L^p(\mu)}^p.
\label{eq:Carlesonembedding}
\end{equation}
\end{prop}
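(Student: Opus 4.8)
The plan is to reduce the $L^p$ vector-valued Carleson embedding to the classical scalar Carleson embedding theorem (the Proposition stated just above) via a good-$\lambda$ / stopping-time decomposition of $f$, exactly the device that is standard when the "outer" exponent $p$ differs from the exponent governing the Carleson condition. Fix $f \in L^p(\mu)$; by density and homogeneity we may assume $f \geq 0$ is bounded with compact support, so only finitely many relevant scales occur. For each $k \in \Z$ let $\Omega_k = \{M_\mu^{\D} f > 2^k\}$, where $M_\mu^{\D}$ is the dyadic maximal function, and let $\{Q_j^k\}_j$ be the maximal dyadic cubes inside $\Omega_k$. Every cube $Q \in \D$ with $\langle f\rangle_Q \neq 0$ sits between two consecutive levels: there is a unique $k = k(Q)$ with $2^k \leq \langle f \rangle_Q < 2^{k+1}$, and then $Q$ is contained in exactly one stopping cube $Q_j^k$ (its "home cube"). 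Write $\mathcal{G}(Q_j^k)$ for the family of $Q$ whose home cube is $Q_j^k$.

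Next I would split the left-hand side of \eqref{eq:Carlesonembedding} according to home cubes. On $\mathcal{G}(Q_j^k)$ one has $|\langle f \rangle_Q| \le 2^{k+1} \approx \inf_{Q_j^k} M_\mu^{\D} f \approx \langle f \rangle_{Q_j^k}$ up to the doubling-free maximal estimates (here one should be slightly careful: $\langle f\rangle_{Q_j^k}$ need not be comparable to $2^k$ for non-doubling $\mu$, but $\langle f\rangle_{(Q_j^k)^{(1)}} \le 2^k$ because the dyadic parent of $Q_j^k$ is not in $\Omega_k$, and $M_\mu^{\D} f \approx 2^k$ on $Q_j^k$; it is the value $2^k$, not an average over $Q_j^k$, that one carries). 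Since the cubes with a given home cube $S := Q_j^k$ all satisfy $Q \subset S$, pulling out the constant $2^{k+1}$ and using the support condition $\supp A_Q \subset Q \subset S$ gives
\begin{align*}
\int \Big[ \sum_{Q \in \D} |\langle f \rangle_Q|^2 |A_Q|^2 \Big]^{p/2} d\mu
&\le \sum_{k,j} \int_{S} \Big[ \mathop{\sum_{Q \subset S}} |\langle f\rangle_Q|^2 |A_Q|^2 \Big]^{p/2} d\mu \\
&\lesssim \sum_{k,j} (2^k)^p \int_{S} \Big[ \mathop{\sum_{Q \subset S}} |A_Q|^2 \Big]^{p/2} d\mu \\
&\le \car_p((A_Q))^p \sum_{k,j} (2^k)^p \mu(S).
\end{align*}
Here the splitting of the outer sum into a sum of integrals over the $S$'s is legitimate because the $S = Q_j^k$ with the same $k$ are pairwise disjoint, and across different $k$ one uses that $[\cdot]^{p/2}$ is subadditive (as $p/2 \le 1$) so that $\big[\sum_k \sum_Q\big]^{p/2} \le \sum_k \big[\sum_Q\big]^{p/2}$ after first grouping the inner sum by home cube.

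It remains to see $\sum_{k,j} (2^k)^p \mu(Q_j^k) \lesssim \|f\|_{L^p(\mu)}^p$. This is the classical scalar Carleson embedding in disguise: the family $\{Q_j^k\}_{k,j}$ with weights $\mu(Q_j^k)$ is a Carleson sequence (the maximal cubes of the level sets $\Omega_k$ form a Carleson family with constant $\lesssim 1$, a standard consequence of $\mu(\Omega_{k+1} \cap R) \le \tfrac12 \mu(R)$-type stopping — or one can simply invoke the weak-$(1,1)$ bound for $M_\mu^{\D}$ to get $\sum_{k,j}(2^k)^p\mu(Q_j^k) \approx \sum_k (2^k)^p \mu(\Omega_k) \approx \|M_\mu^{\D}f\|_{L^p(\mu)}^p \lesssim \|f\|_{L^p(\mu)}^p$ by the layer-cake formula and the maximal theorem). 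Either route closes the estimate with implicit constant depending only on $p$ and the dimension.

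The main obstacle is the non-homogeneity of $\mu$: one cannot pass freely between the value $2^k$ defining the level set, the average $\langle f\rangle_{Q_j^k}$, and the average over a parent, since none of these are comparable in general. The clean way around it, and the step I would be most careful about, is to use the $\D$-dyadic maximal function and organize the stopping cubes so that it is always the {\it level value} $2^k$ (equivalently $M_\mu^{\D}f$ restricted to $Q_j^k$, which genuinely is $\approx 2^k$) that gets carried through the computation, and to invoke the $L^p(\mu)$-boundedness of $M_\mu^{\D}$ — valid for all measures — only at the very end. Everything else (subadditivity of $t\mapsto t^{p/2}$, the support condition to localize $A_Q$, disjointness of same-level stopping cubes) is routine.
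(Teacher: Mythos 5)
Your proposal is correct and follows essentially the same route as the paper's proof: group the cubes $Q$ according to the size of $\langle f\rangle_Q$, dominate within each group by stopping/maximal cubes (your maximal cubes of $\{M_\mu^{\D}f>2^k\}$ coincide with the paper's maximal cubes where the average exceeds $2^k$), use subadditivity of $t\mapsto t^{p/2}$ (this is exactly where $p\le 2$ enters, as in the paper), apply the Carleson condition \eqref{eq:qCarlesonCondition} localized to these cubes, and finish with the $L^p(\mu)$ bound for the dyadic maximal operator. The extra care you take about non-doubling averages is sound but not needed beyond what you already do, since only the level value $2^k$ is ever carried through.
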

\begin{proof}
For each fixed $j \in \Z$ let $(R^i_j)_i$ denote the maximal $R \in \mathcal{D}$ for which $|\langle f \rangle_R| > 2^j$.
We have that
\begin{align*}
\int &\Big[ \sum_{Q \in \mathcal{D}} |\langle f \rangle_Q|^2 |A_Q(x)|^2 \Big]^{p/2}\,d\mu(x) \\
&= \int \Big[ \sum_{j\in \Z} \mathop{\sum_{Q \in \D}}_{|\langle f \rangle_Q| \sim 2^j} |\langle f \rangle_Q|^2 |A_Q(x)|^2 \Big]^{p/2}\,d\mu(x)\\
 &\lesssim \int \Big[ \sum_{j \in \Z} 2^{2j} \sum_i \mathop{\sum_{Q \in \mathcal{D}}}_{Q \subset R^i_j} |A_Q(x)|^2  \Big]^{p/2}\,d\mu(x) \\
&\le  \sum_{j \in \Z} 2^{pj} \sum_i \int_{R^i_j} \Big[ \mathop{\sum_{Q \in \mathcal{D}}}_{Q \subset R^i_j} |A_Q(x)|^2  \Big]^{p/2}\,d\mu(x) \\
&\le\car_p( (A_Q)_{Q \in \mathcal{D}})^p \sum_{j \in \Z} 2^{pj} \mu\Big( \bigcup_i R^i_j \Big) \\
&\le \car_p( (A_Q)_{Q \in \mathcal{D}})^p \sum_{j \in \Z} 2^{pj}  \mu( \{ M_{\mu}^{\mathcal{D}} f > 2^j \}) \\
&\approx \car_p( (A_Q)_{Q \in \mathcal{D}})^p \| M_{\mu}^{\mathcal{D}} f \|_{L^p(\mu)}^p \lesssim \car_p( (A_Q)_{Q \in \mathcal{D}})^p \|f\|_{L^p(\mu)}^p,
\end{align*}
where $M_{\mu}^{\mathcal{D}}$ stands for the dyadic Hardy-Littlewood maximal operator. Here we used the assumption $p \in (1,2]$ simply via the fact that $(a+b)^{\gamma} \le a^{\gamma} + b^{\gamma}$ for $a,b \ge 0$ and $\gamma \in (0, 1]$.
\end{proof}

\subsection{Twisted martingale difference operators and square function estimates}
If $Q \in \D$, $Q\subset Q^*$, and $f \in L^1_{\textup{loc}}(\mu)$, we define the twisted martingale difference operators
\begin{equation*}
\Delta_Q f = \sum_{Q' \in \, \textrm{ch}(Q)} \Big[\frac{\langle f \rangle_{Q'}}{\langle b_{(Q')^a}\rangle_{Q'}}b_{(Q')^a} - \frac{\langle f \rangle_Q}{\langle b_{Q^a}\rangle_Q}b_{Q^a}\Big]1_{Q'}.
\end{equation*}
Note that on the largest $Q^*$ level we agree (by abuse of notation) that $\Delta_{Q^*} = E^b_{Q^*} + \Delta_{Q^*}$, where
$E^b_{Q^*}f = \langle f \rangle_{Q^*} b_{Q^*}$. Therefore, we have that $\int \Delta_Q f \,d\mu = 0$ if $Q \subsetneq Q^*$. We also define 
$$\Delta_k f= \Delta^{Q^*}_k f := \sum_{Q \in \D_k : Q \subset Q^*} \Delta_Q f.$$
Notice that if $\ell(Q^*)=2^s$, then $k \leq s$, that is, only cubes inside the fixed $Q^*$ are considered. 

We now state some lemmata which contain the square function estimates we need in our proof. The first one was proved by Stein on page 103 of \cite{St}: 
\begin{lem}\label{lemma-Stein}
Let $(M, \nu)$ be a $\sigma$-finite measure space and let $\mathfrak{M}$ denote the family of measurable subsets of $M$. Suppose that $\mathcal{F}_1\subseteq \mathcal{F}_2\subseteq \dots$ is an infinite increasing sequence of ($\sigma$-finite) $\sigma$-subalgebras of $\mathfrak{M}$. Let $E_k=E(\cdot | \mathcal{F}_k)$ denote the conditional expectation operator with respect to $\mathcal{F}_k$. Assume that $\{f_k\}_k$ is any sequence of functions on $(M, \nu)$, where $f_k$ is not assumed to be $\mathcal{F}_k$-measurable, and let $(n_k)_k$ be any sequence of positive integers. Then there holds that
\begin{equation}\label{Stein}
\Big\| \Big( \sum_{k \ge 1} |E_{n_k} f_k|^2 \Big)^{1/2} \Big\|_{L^p(\nu)}  \leq A_p \Big\| \Big( \sum_{k \ge 1} |f_k|^2 \Big)^{1/2} \Big\|_{L^p(\nu)}, \quad 1<p<\infty,
\end{equation}
where $A_p$ depends only on $p$. 
\end{lem}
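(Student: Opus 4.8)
The plan is to prove the vector-valued bound \eqref{Stein} first at the endpoint $p=2$, then in the range $2<p<\infty$, and finally to reach $1<p<2$ by duality. The only structural facts I will use are that each $E_j$ is a self-adjoint positive contraction on every $L^r(\nu)$ for which it makes sense, and that conditional Jensen gives $|E_jf|\le E_j|f|$ and $(E_ju)^2\le E_j(u^2)$. The case $p=2$ is immediate: since $E_{n_k}$ is an $L^2(\nu)$-contraction,
\[
\Big\|\Big(\sum_k|E_{n_k}f_k|^2\Big)^{1/2}\Big\|_{L^2(\nu)}^2=\sum_k\|E_{n_k}f_k\|_{L^2(\nu)}^2\le\sum_k\|f_k\|_{L^2(\nu)}^2,
\]
so $A_2=1$.

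For the main step, fix $2<p<\infty$, set $s=p/2>1$ and let $s'=p/(p-2)$ be its conjugate. I would choose, by $L^s$--$L^{s'}$ duality, a function $h\ge0$ with $\|h\|_{L^{s'}(\nu)}\le1$ attaining $\big\|\sum_k|E_{n_k}f_k|^2\big\|_{L^s(\nu)}$. For each $k$, using $|E_{n_k}f_k|^2\le(E_{n_k}|f_k|)^2$, the $\mathcal F_{n_k}$-measurability of $(E_{n_k}|f_k|)^2$ (which allows replacing $h$ by $E_{n_k}h$ in the integral against it), then conditional Jensen, and then the identity $\int E_{n_k}(|f_k|^2)\,E_{n_k}h\,d\nu=\int|f_k|^2\,E_{n_k}h\,d\nu$, I get the termwise bound
\[
\int|E_{n_k}f_k|^2\,h\,d\nu\le\int|f_k|^2\,(M^{E}h)\,d\nu,\qquad M^{E}h:=\sup_{j\ge1}E_jh.
\]
Summing over $k$ by monotone convergence and applying H\"older yields $\big\|\sum_k|E_{n_k}f_k|^2\big\|_{L^s(\nu)}\le\big\|\sum_k|f_k|^2\big\|_{L^s(\nu)}\,\|M^{E}h\|_{L^{s'}(\nu)}$. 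Since $(E_jh)_{j\ge1}$ is a martingale and $s'>1$, Doob's $L^{s'}$ maximal inequality gives $\|M^{E}h\|_{L^{s'}(\nu)}\le (s'/(s'-1))\|h\|_{L^{s'}(\nu)}=s$, and taking square roots proves \eqref{Stein} for this $p$ with $A_p\le(p/2)^{1/2}$.

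Finally, for $1<p<2$ we have $p'\in(2,\infty)$, so I would argue by duality on $L^p(\nu;\ell^2)$. Writing $T\colon\{f_k\}\mapsto\{E_{n_k}f_k\}$ and using that each $E_{n_k}$ is self-adjoint, one has $\int(E_{n_k}f_k)\overline{g_k}\,d\nu=\int f_k\,\overline{E_{n_k}g_k}\,d\nu$ for $f_k\in L^p(\nu)$, $g_k\in L^{p'}(\nu)$ (H\"older makes all products integrable). Hence, testing against $\{g_k\}$ with $\|\{g_k\}\|_{L^{p'}(\nu;\ell^2)}\le1$ and invoking the already-proven case $p'>2$ for the sequence $\{g_k\}$,
\[
\Big\|\{E_{n_k}f_k\}\Big\|_{L^p(\nu;\ell^2)}=\sup_{\{g_k\}}\Big|\sum_k\int f_k\,\overline{E_{n_k}g_k}\,d\nu\Big|\le A_{p'}\Big\|\{f_k\}\Big\|_{L^p(\nu;\ell^2)},
\]
so $A_p\le A_{p'}$ and the proof is complete.

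The one genuinely delicate point will be the $p>2$ step. Conditional expectation is self-adjoint only against its own sub-$\sigma$-algebra, so the argument must push all the different operators $E_{n_k}$ onto the single dual function $h$ and then dominate everything by the one martingale maximal function $M^Eh$, rather than attempting to handle each $E_{n_k}$ separately; the interchange of the infinite sum with the integral is harmless because every quantity in sight is nonnegative, and the $\sigma$-finiteness of the $\mathcal F_k$ is exactly what keeps conditional expectation and Doob's inequality available in the (possibly infinite-measure) generality of the statement.
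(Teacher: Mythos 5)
Your proof is correct. Note, however, that the paper does not prove this lemma at all: it is stated without proof, with a pointer to Stein's book \cite{St} (page 103), and is then only used as a black box. What you have written out is essentially the classical argument behind that citation: the case $p=2$ by orthogonality/contractivity, the case $p>2$ by dualizing $L^{p/2}$ against $L^{(p/2)'}$, pushing each $E_{n_k}$ onto the single dual function $h$ via the tower property and conditional Cauchy--Schwarz, and dominating by the martingale maximal function $M^Eh$, which Doob's inequality controls in $L^{(p/2)'}$; then $1<p<2$ by $L^p(\nu;\ell^2)$--$L^{p'}(\nu;\ell^2)$ duality, using that the diagonal operator $\{f_k\}\mapsto\{E_{n_k}f_k\}$ is formally self-adjoint. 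All the individual steps are legitimate: the identities $\int (E_jf)\,g\,d\nu=\int f\,(E_jg)\,d\nu$ and $\int E_j(|f_k|^2)E_jh\,d\nu=\int |f_k|^2E_jh\,d\nu$ are justified because the relevant products are in $L^1$ by H\"older and because the $\sigma$-finiteness of the $\mathcal{F}_j$ makes the conditional expectations and Doob's inequality available, exactly as you say. Two cosmetic points: rather than asserting that the supremum in the $L^{s}$--$L^{s'}$ duality is \emph{attained} (which presupposes the left-hand side is finite), it is cleaner to either take a supremum over nonnegative $h$ in the unit ball, or first truncate to finitely many indices $k$ and pass to the limit by monotone convergence; and in the final duality step one should test against, say, finitely supported simple $\{g_k\}$, which norm $L^p(\nu;\ell^2)$ in the $\sigma$-finite setting without assuming a priori that $\{E_{n_k}f_k\}$ lies in that space. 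With these routine adjustments your argument is a complete, self-contained proof, with the explicit constant $A_p\le (p/2)^{1/2}$ for $p\ge 2$ and $A_p\le A_{p'}$ for $1<p<2$.
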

The proof of the next lemma is quite hard. It was proved by Lacey and the first named author \cite{LM1} (but only stated in $L^2(\mu)$). But we will not need the full strength of this, since our function
is bounded. Therefore, instead of using the next lemma, we will indicate a somewhat simpler proof in the $|f| \le 1$ case, which is the only thing we will need. This is not that easy either but
we include the key details for the convenience of the reader.
\begin{lem}\label{lemma.twisted-SF}
Suppose $F \in \mathcal{F}_{Q^*}$ and $f \in L^q(\mu)$. Suppose also that we have constants $\epsilon_Q$, $Q \in \D$, which satisfy $|\epsilon_Q|\leq 1$. Then there holds that 
\begin{equation*}\label{twisted-SF}
\Big\| \mathop{\sum_{Q \in \D}}_{Q^\alpha = F} \epsilon_Q \Delta_Q f \Big\|^q_{L^q(\mu)} \lesssim \|f\|^q_{L^q(\mu)}.
\end{equation*}
\end{lem}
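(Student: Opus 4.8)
The plan is to prove the estimate $\big\|\sum_{Q^\alpha = F}\epsilon_Q\Delta_Q f\big\|_{L^q(\mu)}^q \lesssim \|f\|_{L^q(\mu)}^q$ by decomposing each twisted difference $\Delta_Q f$ into a "paraproduct-like" term carrying the averages $\langle f\rangle_{Q'}$ and $\langle f\rangle_Q$ against the test functions $b_{(Q')^a}, b_{Q^a}$, and then estimating the resulting square function by a combination of the Carleson embedding theorems (Proposition \ref{CarlesonEmbeddingThm} and the classical one) together with Stein's inequality (Lemma \ref{lemma-Stein}). Write
\[
\Delta_Q f = \sum_{Q'\in\children(Q)}\Big[\frac{\langle f\rangle_{Q'}}{\langle b_{(Q')^a}\rangle_{Q'}}b_{(Q')^a} - \frac{\langle f\rangle_Q}{\langle b_{Q^a}\rangle_Q}b_{Q^a}\Big]1_{Q'}.
\]
Because we only range over $Q$ with $Q^\alpha = F$, for all such $Q$ (and their children) the relevant test function is $b_F$: by construction of the stopping collection $\mathcal F_{Q^*}$, on every $Q$ with $Q^\alpha = F$ we have $|\langle b_F\rangle_Q|\ge 1/2$ and $\langle|b_F|^q\rangle_Q\le 2^{q'+1}A^{q'}$, so the denominators are bounded below and the "local $L^q$ control" of $b_F$ holds uniformly on this family. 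This is exactly the information the stopping-time construction was designed to give, and it replaces the doubling-based estimate that fails in the non-homogeneous setting.

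The main step is the square-function bound. By the triangle inequality in $\ell^2$ it suffices to control the two pieces separately:
\[
S_1 f := \Big[\sum_{Q^\alpha = F}\Big|\sum_{Q'\in\children(Q)}\frac{\langle f\rangle_{Q'}}{\langle b_F\rangle_{Q'}}b_F 1_{Q'}\Big|^2\Big]^{1/2},\qquad
S_2 f := \Big[\sum_{Q^\alpha = F}\Big|\frac{\langle f\rangle_Q}{\langle b_F\rangle_Q}b_F 1_Q\Big|^2\Big]^{1/2}.
\]
For $S_2 f$, since $|\langle b_F\rangle_Q|\ge 1/2$ we can write it as $|b_F(x)|\big(\sum_Q |\langle f\rangle_Q|^2 1_Q(x)\big)^{1/2}$ up to a constant, where the sum is over $Q$ with $Q^\alpha = F$; setting $A_Q := b_F 1_Q/\langle b_F\rangle_Q$ one checks the $q$-Carleson condition \eqref{eq:qCarlesonCondition}: for $R$ with $Q^\alpha = F$ the inner sum $\sum_{Q\subset R, Q^\alpha=F}|A_Q|^2 \lesssim |b_F|^2 \cdot (\text{number of generations})$ — and here one must be slightly careful, because the naive count is logarithmic. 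The honest way is to use that $\sum_{Q\subset R}|\langle f\rangle_Q|^2 1_Q$ with the $b_F$-weight is controlled via the stopping structure: one re-expresses $\langle f\rangle_{Q'} - \langle f\rangle_Q$ as a genuine martingale difference plus lower-order terms, so that the "diagonal" part telescopes and only the differences remain, and these satisfy a clean Carleson estimate because $\{\alpha_Q\}$ from Lemma \ref{lem:mescar} is Carleson. Concretely, one should split $\Delta_Q f$ further as a piece involving $b_F(\langle f\rangle_{Q'}-\langle f\rangle_Q)/\langle b_F\rangle_Q$ on $Q'$ (the "difference" part) plus a piece involving $b_F\langle f\rangle_{Q'}(1/\langle b_F\rangle_{Q'} - 1/\langle b_F\rangle_Q)$ (controlled since the two averages of $b_F$ are comparable and both bounded below, giving a factor that behaves like a bounded martingale difference of $b_F$ itself). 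For the difference part, apply Proposition \ref{CarlesonEmbeddingThm} with $p=q$, $A_Q := b_F\,1_Q$: the $q$-Carleson condition reduces to $\frac{1}{\mu(R)}\int_R |b_F|^q\Big(\sum_{Q\subset R, Q^\alpha=F}1_Q\Big)^{q/2}\,d\mu$, and since for a \emph{fixed} $x$ the cubes $Q\ni x$ with $Q^\alpha=F$ form a nested chain whose lengths form a geometric sequence down to the stopping cube $x^\alpha$ containing $x$... the count is again only logarithmic unless one is careful. The resolution — and this is the heart of the matter — is to insert the weight $2^{-\delta\,\mathrm{gen}}$ coming from a refined decomposition, or better: to observe that what one actually needs is not $\sum 1_Q$ but $\sum_Q |\langle f\rangle_Q - \langle f\rangle_{Q^{(1)}}|^2 1_Q$-type quantities, for which the honest martingale-difference square function of $f$ (weighted by $|b_F|^q$) does the job, and then one reduces to Stein's lemma to pass from $E^b$-type conditional expectations to ordinary ones.

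The cleanest route, which I would actually follow, is: (i) reduce to $|f|\le 1$ as the lemma's preamble suggests is the only case needed later — then $\langle f\rangle_Q$ is bounded, $S_2 f$ reduces to $\big(\sum_{Q^\alpha=F}|b_F 1_Q|^2\big)^{1/2}\lesssim |b_F|\cdot(\#\text{gens})^{1/2}$, which is \emph{not} enough by itself, so one instead keeps the cancellation; (ii) write $\sum_{Q^\alpha=F}\epsilon_Q\Delta_Q f = \sum_{Q^\alpha=F}\epsilon_Q b_F\,\sigma_Q$ where $\sigma_Q$ is supported on $Q$, has $\int\sigma_Q\,d\mu=0$ for $Q\subsetneq Q^*$ (using $\int b_F\cdot(\cdot)=$ the averages), and $\{\sigma_Q\}$ behaves like a (twisted) martingale difference sequence; (iii) estimate $\|b_F\big(\sum|\sigma_Q|^2\big)^{1/2}\|_{L^q}\le \|b_F\|_{L^q}^{?}\cdot\|\big(\sum|\sigma_Q|^2\big)^{1/2}\|_{?}$ — but $b_F\notin L^\infty$, so Hölder alone fails; instead one uses the \emph{localized} control $\langle|b_F|^q\rangle_Q\lesssim 1$ on the stopping family together with a good-$\lambda$ or Carleson argument, exactly as in \cite{LM1}, to absorb $b_F$. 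Steps (ii)–(iii) are where the real work is, and I expect the main obstacle to be precisely the absorption of the non-doubling, only-$L^q$-integrable factor $b_F$ into the square function estimate without losing the geometric gain across scales — this is the well-known difficulty that Lemma \ref{lemma.twisted-SF} (via \cite{LM1}) is designed to overcome, and in the $|f|\le1$ case one gets by with a more hands-on argument replacing the full strength of \cite{LM1}: bound $\sum_{Q^\alpha=F}|\Delta_Q f|$ pointwise using boundedness of $\langle f\rangle$, apply the $q$-Carleson embedding Proposition \ref{CarlesonEmbeddingThm} to the pieces $A_Q := b_F\,1_Q/\langle b_F\rangle_Q$ whose $q$-Carleson constant is finite because condition (3) plus the stopping inequality \eqref{StoppingFk} give $\sum_{Q\subset R,\,Q^\alpha=F}\mu(Q)\lesssim\mu(R)$ (a Carleson packing), and finally invoke Stein's Lemma \ref{lemma-Stein} to handle the conditional-expectation structure hidden in the $\langle f\rangle_{Q'}$ versus $\langle f\rangle_Q$ comparison. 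Assembling these bounds and summing the two pieces $S_1 f, S_2 f$ yields the claimed $L^q(\mu)$ estimate.
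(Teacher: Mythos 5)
Your proposal does not close the argument, and its one concrete mechanism rests on a false claim. You propose to apply Proposition \ref{CarlesonEmbeddingThm} with $A_Q := b_F 1_Q/\langle b_F\rangle_Q$ over the family $\{Q:\,Q^\alpha=F\}$, asserting that condition (3) plus \eqref{StoppingFk} give the packing $\sum_{Q\subset R,\,Q^\alpha=F}\mu(Q)\lesssim\mu(R)$. That packing is not true: \eqref{StoppingFk} and Lemma \ref{lem:mescar} give a Carleson property only for the \emph{stopping} cubes $\mathcal{F}_{Q^*}$, whereas the cubes with $Q^\alpha=F$ fill in \emph{every} generation between $F$ and the next stopping time, so for a point $x$ that is never stopped the chain $\{Q\ni x:\,Q^\alpha=F\}$ is infinite and $\sum_{Q\subset R,\,Q^\alpha=F}\mu(Q)$ is comparable to (number of generations)$\times\mu(R)$, i.e.\ unbounded. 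You notice this ("the naive count is logarithmic") but never resolve it; the same defect kills the split into $S_1f$ and $S_2f$, since separating the parent and child terms destroys exactly the cancellation $\langle f\rangle_{Q'}-\langle f\rangle_Q$ that makes a scale-by-scale Carleson/packing bound unnecessary. Two further gaps: for a child $Q'$ that is itself a stopping cube the test function in $\Delta_Q f$ is $b_{Q'}$, not $b_F$, so your claim that "the relevant test function is $b_F$" on all children is wrong and handling these stopped children is part of the real work; and your "cleanest route" both reduces to $|f|\le 1$ (which does not prove the statement as formulated, for general $f\in L^q(\mu)$) and ultimately defers the absorption of the merely $L^q$-integrable, non-doubling factor $b_F$ to "exactly as in \cite{LM1}" --- which is circular, since that absorption \emph{is} the content of the lemma.

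For comparison: the paper does not reprove this lemma either --- it cites \cite{LM1} for the full $f\in L^q(\mu)$ statement --- and what it actually proves and uses is only the weaker version with $|f|\le 1$ and bound $\mu(F)$. The route there is quite different from what you sketch: one first replaces $\Delta_Q$ by the operators $D_Q$ (omitting the stopped children $\mathcal{H}$ and the $b$-factors, via Proposition 2.4 of \cite{LM1}), then proves a uniform local estimate on every dyadic $P$ with the small exponent $s=1/2$ for $f=1$, using the algebraic expansion of $1/\langle b_F\rangle_{Q'}-1/\langle b_F\rangle_Q$, Corollary 2.10 of \cite{LM1} and the classical martingale square function of $1_Pb_F$; the non-homogeneous John--Nirenberg principle then upgrades this to all exponents, and a three-term decomposition (classical martingale differences of $f$ with bounded coefficients plus $D_Q1$-terms) transfers the case $f=1$ to $|f|\le1$. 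If you want a self-contained argument at the level of this paper, that John--Nirenberg scheme is the missing idea; a Carleson embedding over the non-packed family $\{Q:\,Q^\alpha=F\}$ cannot work.
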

But for us the following consequence is enough (and we will indicate the proof of this simpler statement):

\begin{lem}
Suppose $F \in \mathcal{F}_{Q^*}$ and $|f| \le 1$. Suppose also that we have constants $\epsilon_Q$, $Q \in \D$, which satisfy $|\epsilon_Q|\leq 1$. Then there holds that
\begin{equation}\label{twisted-SF-subset}
\Big\| \mathop{\sum_{Q \in \mathcal{D}}}_{Q^\alpha = F} \epsilon_Q \Delta_Q f \Big\|^q_{L^q(\mu)}  \lesssim \mu(F).
\end{equation}
\end{lem}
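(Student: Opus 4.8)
The plan is to deduce the bounded statement \eqref{twisted-SF-subset} from the harder Lemma \ref{lemma.twisted-SF} — or rather from a direct argument mimicking its proof — by exploiting that $|f|\le 1$ and that the operators $\Delta_Q$ are supported in $F$ and annihilate the ``wrong'' averages. First I would record the structural facts about $\Delta_Q$ for $Q$ with $Q^\alpha = F$: such $Q$ satisfy $Q\subset F$ but $Q$ is not a stopping cube strictly inside $F$ (unless $Q=F$ in the degenerate sense of the collection), so on every $Q'\in\children(Q)$ we have $|\langle b_F\rangle_{Q'}|\ge 1/2$ and $\langle |b_F|^q\rangle_{Q'}\le 2^{q'+1}A^{q'}$, i.e. the normalizing denominators $\langle b_{Q^a}\rangle_Q$ are bounded below and the test functions $b_{Q^a}=b_F$ are $L^q$-controlled on the relevant cubes. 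Thus $\Delta_Q f = \sum_{Q'\in\children(Q)} \big(\langle f\rangle_{Q'}/\langle b_F\rangle_{Q'}\big) b_F 1_{Q'} - \big(\langle f\rangle_Q/\langle b_F\rangle_Q\big) b_F 1_Q$, with all the scalar coefficients uniformly bounded by $2\|f\|_\infty \le 2$.

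Next I would split the square function into two pieces according to the two terms in $\Delta_Q$. For the ``martingale part'' one telescopes: writing $c_Q := \langle f\rangle_Q/\langle b_F\rangle_Q$, the sum $\sum_{Q^\alpha=F}\epsilon_Q\Delta_Q f$ is, pointwise at $x\in F$, a sum of the form $b_F(x)\sum_k (\epsilon\text{-weighted differences of }c_{Q})$ along the chain of dyadic cubes through $x$. The key estimate is a square-function bound
\begin{displaymath}
\Big\| b_F \Big(\sum_{Q^\alpha=F} |\epsilon_Q|^2\, |\widetilde\Delta_Q f|^2\Big)^{1/2}\Big\|_{L^q(\mu)}^q \lesssim \mu(F),
\end{displaymath}
where $\widetilde\Delta_Q f$ denotes the relevant normalized martingale-type difference of the bounded function $f$. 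To get this, pull $b_F$ out by Hölder with exponents $q'$ and ... actually by Hölder with exponents determined by $q$: estimate $\int_F |b_F|^q \big(\sum|\widetilde\Delta_Q f|^2\big)^{q/2}\,d\mu$ using that $\sum |\widetilde\Delta_Q f|^2$ is an $L^\infty$-type (or $\BMO$-type) object since $f$ and all the coefficients are bounded — more precisely, apply Stein's inequality (Lemma \ref{lemma-Stein}) to the bounded sequence to reduce to a genuine martingale square function, which for a bounded function is controlled in every $L^p$, and then combine with $\|b_F\|_{L^q(\mu)}^q\le A\mu(F)$ via the Carleson/stopping structure (Lemma \ref{lem:mescar} and the Carleson embedding Proposition). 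This is exactly the place where the uniform lower bound $|\langle b_F\rangle_{Q'}|\ge 1/2$ inside $F$ is essential.

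The main obstacle I expect is precisely the interaction between the unbounded factor $b_F$ (only in $L^q$, $q<2$) and the square function: one cannot simply say $|b_F|\lesssim 1$, and a naive Hölder loses too much. The resolution I would pursue is the standard one from \cite{LM1}: decompose $F$ according to the stopping cubes for $b_F$, on each stopping region $b_F$ is in $L^q$ with the right normalization, use the $q$-Carleson embedding of Proposition \ref{CarlesonEmbeddingThm} with the coefficient functions $A_Q := \epsilon_Q b_F 1_{(\cdot)}$ against the bounded ``data'' $f$, and invoke Lemma \ref{lem:mescar} to sum the $\mu(S)$ over stopping children using the geometric decay $\tau<1$ from \eqref{StoppingFk}. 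For the ``initial term'' $\langle f\rangle_Q/\langle b_F\rangle_Q\, b_F 1_Q$ one argues similarly but more simply since $|\langle f\rangle_Q|\le 1$, so it contributes $\lesssim \int_Q |b_F|^q\,d\mu$ summed in a Carleson fashion over $Q$ with $Q^\alpha = F$, again controlled by $A\mu(F)$. Assembling the two pieces and using $\|g_1+g_2\|_{L^q}^q \lesssim \|g_1\|_{L^q}^q + \|g_2\|_{L^q}^q$ gives \eqref{twisted-SF-subset}.
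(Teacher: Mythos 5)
There is a genuine gap. You correctly isolate the main obstacle (the factor $b_F$, only in $L^q(\mu)$ with $q<2$, multiplying a square-function--type object), but the resolution you sketch does not actually close it. First, passing from the fixed-coefficient sum $\sum_{Q^\alpha=F}\epsilon_Q\Delta_Q f$ to the pointwise square expression $b_F\bigl(\sum_Q|\widetilde\Delta_Q f|^2\bigr)^{1/2}$ is not justified: unconditionality of the \emph{twisted} martingale differences is precisely the hard content (Lemma \ref{lemma.twisted-SF}), so it cannot be assumed in its own proof. Second, your proposed use of Proposition \ref{CarlesonEmbeddingThm} with $A_Q:=\epsilon_Q b_F 1_Q$ cannot work: for a point $x$ the sum $\sum_{Q\subset R}|b_F(x)|^2 1_Q(x)$ has infinitely many identical terms, so the Carleson condition \eqref{eq:qCarlesonCondition} is not even finite; the Carleson embedding is the right tool for the paraproduct, not for this lemma. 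Third, a structural slip: if $Q'\in\mathrm{ch}(Q)$ is a stopping cube then $(Q')^a=Q'$, the relevant test function is $b_{Q'}$ (not $b_F$), and $|\langle b_F\rangle_{Q'}|\ge 1/2$ may fail there, so your formula for $\Delta_Q f$ with $b_F$ on every child is not correct; these children must be split off separately.

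The paper's proof runs along a different track whose key devices are absent from your sketch. One first reduces (as in Proposition 2.4 of \cite{LM1}) to the coefficient functions $D_Q f$, which contain no $b_F$ at all, at the price of a harmless $\|f1_F\|_{L^q(\mu)}^q\lesssim\mu(F)$ term and of having to control the \emph{maximal truncations} $\sup_{\epsilon>0}\bigl|\sum_{\ell(Q)>\epsilon}\epsilon_Q D_Q f\bigr|$. These are then bounded by the non-homogeneous John--Nirenberg principle: it suffices to prove a local estimate $\int_P(\cdots)^{s}\,d\mu\le C_1\mu(P)$ for a single small exponent (the paper uses $s=1/2$) and all $P\in\D$, which self-improves to every $L^p$. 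The local estimate is proved first for $f=1$ via the second-order expansion of $\langle b_F\rangle_{Q'}^{-1}-\langle b_F\rangle_{Q}^{-1}$, where the linear term is a (stopped) martingale transform of $1_Pb_F$ handled by Corollary 2.10 of \cite{LM1} and the quadratic term is tamed exactly because $s=1/2$ turns squares of differences into a classical martingale square function of $1_Pb_F$; then general $|f|\le1$ is bootstrapped from the $f=1$ case through the three-term decomposition \eqref{e:mt1}--\eqref{e:mt3}. Without the maximal truncations, the John--Nirenberg upgrade, and the $f=1$ bootstrap, the interaction between $b_F\in L^q$ and the square function that you flagged remains unresolved, so the proposal as written does not yield \eqref{twisted-SF-subset}.
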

\begin{proof}
For the fixed $F \in \mathcal{F}_{Q^*}$, we let $j \in \N$ be such that $F \in \mathcal{F}_{Q^*}^j$ and define $\mathcal{H} = \mathcal{H}_F = \{H \in \mathcal{F}_{Q^*}^{j+1}:\, H \subset F\}$.
For a cube $Q \in \mathcal{D}$ for which $Q^a = F$ we set
\begin{displaymath}
D_Q f := \sum_{Q' \in \textup{ch}(Q) \setminus \mathcal{H}}  \Big[\frac{\langle f \rangle_{Q'}}{\langle b_{F}\rangle_{Q'}}- \frac{\langle f \rangle_Q}{\langle b_{F}\rangle_Q}\Big]1_{Q'}.
\end{displaymath}
The initial step is that
\begin{displaymath}
\Big\| \mathop{\sum_{Q \in \D}}_{Q^\alpha = F} \epsilon_Q \Delta_Q f \Big\|^q_{L^q(\mu)} \lesssim \|f1_F\|_{L^q(\mu)}^q 
+ \Big\| \sup_{\epsilon > 0} \Big| \mathop{\mathop{\sum_{Q \in \mathcal{D}}}_{Q^a = F}}_{\ell(Q) > \epsilon} \epsilon_Q D_Q f \Big|\, \Big\|_{L^q(\mu)}^q.
\end{displaymath}
This works exactly as in \cite{LM1}, proof of Proposition 2.4.

The second step is to show that
\begin{equation}\label{eq:D}
\Big\| \sup_{\epsilon > 0} \Big| \mathop{\mathop{\sum_{Q \in \mathcal{D}}}_{Q^a = F}}_{\ell(Q) > \epsilon} \epsilon_Q D_Q f \Big|\, \Big\|_{L^p(\mu)}^p \lesssim \mu(F), \qquad |f| \le 1,\, p \in (0, \infty).
\end{equation}
The argument we will next give shows that for \eqref{eq:D} it is enough to show that for a fixed $s \in (0, \infty)$ but for all $P \in \mathcal{D}$ there holds that
\begin{equation}\label{eq:JN}
\Big\| \sup_{\epsilon > 0} \Big| \mathop{\mathop{\sum_{Q \in \mathcal{D}:\, Q \subset P}}_{Q^a = F}}_{\ell(Q) > \epsilon} \epsilon_Q D_Q f \Big|\, \Big\|_{L^s(\mu)}^s \le C_1\mu(P).
\end{equation}

Consider a fixed function $f$ for which $|f| \le 1$.
Let us define $\varphi_Q = C_2^{-1}\epsilon_Q D_Q f$, if $Q^a = F$, and $\varphi_Q = 0$ otherwise. Notice that $\|\varphi_Q\|_{L^{\infty}(\mu)} \le 1$ if $C_2 \ge 4$.
Notice also that $\varphi_Q$ is supported on $Q$ and constant on the children $Q' \in \textup{ch}(Q)$. For $P \in \mathcal{D}$ we define
\begin{displaymath}
\Phi_P := \sup_{\epsilon > 0} \Big| \mathop{\mathop{\sum_{Q \in \mathcal{D}}}_{Q \subset P}}_{\ell(Q) > \epsilon} \varphi_Q \Big|
 = C_2^{-1} \sup_{\epsilon > 0} \Big| \mathop{\mathop{\sum_{Q \in \mathcal{D}: \, Q \subset P}}_{Q^a = F}}_{\ell(Q) > \epsilon} \epsilon_Q D_Q f \Big|. 
\end{displaymath}
Suppose we have \eqref{eq:JN} with some $s$ and for all $P$. Then for all $P \in \D$ we have that
\begin{align*}
\mu(\{x \in P: \, \Phi_P(x) > 1\}) \le \int_P \Phi_P^s\,d\mu 
&= C_2^{-s} \Big\| \sup_{\epsilon > 0} \Big| \mathop{\mathop{\sum_{Q \in \mathcal{D}:\, Q \subset P}}_{Q^a = F}}_{\ell(Q) > \epsilon} \epsilon_Q D_Q f \Big|\, \Big\|_{L^s(\mu)}^s \\
&\le C_2^{-s}C_1\mu(P) \\ &\le \mu(P)/2,
\end{align*}
if $C_2 \ge C_1^{1/s}2^{1/s}$. So let us fix $C_2$ large enough.

The non-homogeneous John--Nirenberg principle (see e.g. Lemma 2.8 of \cite{LM1}) now tells us that
for every $P \in \mathcal{D}$ and $t > 1$ there holds that
\begin{displaymath}
\mu(\{x \in P: \Phi_P(x) > t\}) \le 2^{-(t-1)/2} \mu(P).
\end{displaymath}
But then we have for every $p \in (0, \infty)$ and $P \in D$ that
\begin{equation}\label{eq:JN2}
\Big\| \sup_{\epsilon > 0} \Big| \mathop{\mathop{\sum_{Q \in \mathcal{D}:\, Q \subset P}}_{Q^a = F}}_{\ell(Q) > \epsilon} \epsilon_Q D_Q f \Big|\, \Big\|_{L^p(\mu)}^p
\lesssim \int_P \Phi_P^p\,d\mu \lesssim \mu(P).
\end{equation}
With the choice $P = F$ we have \eqref{eq:D}.

So we have reduced to showing \eqref{eq:JN} with some exponent $s \in (0, \infty)$ and for all dyadic cubes $P \in \D$. We will first do this with $f = 1$ and $s = 1/2$, i.e.,
we will prove that for every $P \in \mathcal{D}$ there holds that
\begin{displaymath}
\int_P \Big[\sup_{\epsilon > 0} \Big| \mathop{\mathop{\sum_{Q \in \mathcal{D}}}_{Q^a = F, \, Q \subset P}}_{\ell(Q) > \epsilon} \epsilon_Q D_Q 1 \Big|\Big]^{1/2} \,d\mu \lesssim \mu(P).
\end{displaymath}
Let us write
\begin{displaymath}
\frac {1} {\langle b_F \rangle _{Q'}  } - \frac {1} {\langle b_F \rangle_Q}  =
	\frac {{\langle b_F \rangle_Q}  -  {\langle b_F \rangle _{Q'}  }} {\langle b_F \rangle _{Q} ^2     } +
	\frac { {[\langle b_F \rangle_Q}  -  {\langle b_F \rangle _{Q'}  }]^2 } {{\langle b_F \rangle_Q}^2\langle b_F \rangle _{Q'}     }.
\end{displaymath}
Define $\tilde \epsilon_Q := \epsilon_Q / \langle b_F \rangle _{Q} ^2$, $Q^a  = F$. Note that $|\tilde \epsilon_Q| \lesssim 1$, and then that
\begin{align*}
\int_P&\Big[ \sup_{\epsilon > 0} \Big| \mathop{\mathop{\sum_{Q \in \mathcal{D}}}_{Q^a = F, \, Q \subset P}}_{\ell(Q) > \epsilon} \tilde \epsilon_Q \sum_{Q' \in \textup{ch}(Q) \setminus \mathcal{H}}
[\langle b_F \rangle _{Q'}   - {\langle b_F \rangle_Q}]1_{Q'} \Big| \,d\mu\Big]^{1/2} \\
&\le \mu(P)^{1-1/(2q)} \Big( \int_P \Big[ \sup_{\epsilon > 0} \Big| \mathop{\mathop{\sum_{Q \in \mathcal{D}}}_{Q^a = F, \, Q \subset P}}_{\ell(Q) > \epsilon} \tilde \epsilon_Q \sum_{Q' \in \textup{ch}(Q) \setminus \mathcal{H}}
[\langle b_F \rangle _{Q'}   - {\langle b_F \rangle_Q}]1_{Q'} \Big| \,d\mu\Big]^{q} \Big)^{1/2q} \\
&\le \mu(P)^{1-1/(2q)} \|1_P b_F\|_{L^q(\mu)}^{1/2} \lesssim \mu(P).
\end{align*}
The penultimate estimate follows from Corollary 2.10 of \cite{LM1} (with $p = q$).
For the last inequality we have the following explanation. It is trivial if $F \cap P = \emptyset$ or $F \subset P$. Otherwise,
we may assume that there is a $Q$ for which $Q^a = F$ and $Q \subset P \subset F$. But then $P^a = F$.

The exponent $s = 1/2$ is more useful now when we are dealing with the second term:
\begin{align*}
\int_P& \Big[ \sup_{\epsilon > 0} \Big| \mathop{\mathop{\sum_{Q \in \mathcal{D}}}_{Q^a = F, \, Q \subset P}}_{\ell(Q) > \epsilon} \epsilon_Q \sum_{Q' \in \textup{ch}(Q) \setminus \mathcal{H}}
\frac { {[\langle b_F \rangle_Q}  -  {\langle b_F \rangle _{Q'}  }]^2 } {{\langle b_F \rangle_Q}^2\langle b_F \rangle _{Q'}} 1_{Q'} \Big|\Big]^{1/2} \,d\mu \\
&\lesssim \int_P \Big[ \sum_{Q \in \D} |\Delta_Q^c (1_P b_F) |^2\Big]^{1/2}\,d\mu \\
& \le \mu(P)^{1-1/q} \Big\| \Big[ \sum_{Q \in \D} |\Delta_Q^c (1_P b_F) |^2\Big]^{1/2} \Big\|_{L^q(\mu)} \\ &\lesssim \mu(P)^{1-1/q}\|1_P b_F\|_{L^q(\mu)} \lesssim \mu(P).
\end{align*}
Here
\begin{displaymath}
\Delta_Q^c f = \sum_{Q' \in \textup{ch}(Q)} [\langle f \rangle_{Q'} - \langle f \rangle_Q]1_{Q'}
\end{displaymath}
is the classical martingale difference.
So we have proved \eqref{eq:JN} with $s = 1/2$ and $f=1$ for every $P \in \D$. That means that for $f = 1$ we have \eqref{eq:JN2} with every $p \in (0, \infty)$ and $P \in \D$.

Consider now a function $f$ for which $|f| \le 1$.
Using the above special case we will now prove \eqref{eq:JN} for every $P \in \D$ with $s = 1$.
Let us write
\begin{align} \notag 
	\frac {\langle f \rangle _{Q'}} {\langle b_F \rangle _{Q'}  } 
	-
	\frac {\langle f \rangle _{Q}} {\langle b_F \rangle_Q}  
	&=   
	 \Bigl\{
	\frac {\langle f \rangle _{Q'}} {\langle b_F \rangle _{Q}  } 
	-	\frac {\langle f \rangle _{Q}} {\langle b_F \rangle_Q}  
	\Bigr\}+ 
	\Bigl\{
		\frac {\langle f \rangle _{Q'}} {\langle b_F \rangle _{Q'}  } - 
			\frac {\langle f \rangle _{Q'}} {\langle b_F \rangle _{Q}  } 
	\Bigr\} 
	\\ \label{e:mt1} 
	&= \frac 1 {\langle b_F \rangle _{Q}  } 
	\bigl\{ {\langle f \rangle _{Q'}}  -  {\langle f \rangle _{Q}} 
	\} 
	\\ \label{e:mt2}
	& \qquad +	
	\bigl\{ {\langle f \rangle _{Q'}}  -  {\langle f \rangle _{Q}} \bigr\} 
	\Bigl\{
	\frac 1 { \langle b_F \rangle_{Q'}} - 
	\frac 1 {\langle  b_F\rangle_{Q} } 
	\Bigr\}
	\\ \label{e:mt3}& \qquad + 
	\langle f\rangle_Q 
	\Bigl\{
	\frac 1 { \langle b_F \rangle_{Q'}} - 
	\frac 1 {\langle  b_F\rangle_{Q} } 
	\Bigr\}.
\end{align}
The terms \eqref{e:mt1}-\eqref{e:mt3} give us the corresponding decomposition
\begin{align*}
\epsilon_Q D_Q f = \epsilon_Q^1 \dot \Delta_Q^c f + \epsilon_Q \Delta^{c}_Q f \cdot D_Q 1 + \epsilon_Q^2 D_Q 1,
\end{align*}
where $\Delta_Q^c$ is the classical martingale defined above, $\dot \Delta_Q^c$ is the stopped classical martingale
\begin{displaymath}
\dot \Delta_Q^c f = \sum_{Q' \in \textup{ch}(Q) \setminus \mathcal{H}} [\langle f \rangle_{Q'} - \langle f \rangle_Q]1_{Q'},
\end{displaymath}
and the bounded constants $\epsilon_Q^1$ and $\epsilon_Q^2$ are defined by
\begin{displaymath}
\epsilon_Q^1 =  \frac{\epsilon_Q}{\langle b_F \rangle _{Q}  }, \qquad \epsilon_Q^2 = \epsilon_Q \langle f \rangle_Q.
\end{displaymath}

The first term can be bounded by H\"older (say with $p=2$) and using Corollary 2.10 of \cite{LM1} (with $p = 2$).
The rest exploit the special case $f = 1$. The second term can be bounded by bringing the absolute values in, using H\"older to the sums with $p=2$, and then using
H\"older in the integral with $p=2$. Here one needs \eqref{eq:JN2} with $f=1$ and $p=2$. The last term is just  \eqref{eq:JN2} with $f=1$ and $p=1$. We are done.
\end{proof}

In the $|f| \le 1$ case we can get rid of the assumption $Q^a = F$ as follows:
\begin{prop}\label{MD-square.function.estimate}
Let $|f| \leq 1$. Then there holds that
\begin{equation}\label{eq:MD-square.function.estimate}
\Big\| \Big( \sum_{k} |\Delta_{k} f|^2 \Big)^{1/2} \Big\|^q_{L^q(\mu)} \lesssim \mu(Q^*).
\end{equation}
\end{prop}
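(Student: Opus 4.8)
The plan is to peel the generational square function apart into the single-stopping-cube estimate \eqref{twisted-SF-subset} and then to sum over the stopping cubes $F\in\mathcal F_{Q^*}$. First I would record the pointwise identity for $|\Delta_k f|^2$: the cubes $Q\in\D_k$ with $Q\subset Q^*$ are pairwise disjoint and $\supp\Delta_Q f\subset Q$, so $|\Delta_k f|^2=\sum_{Q\in\D_k,\,Q\subset Q^*}|\Delta_Q f|^2$ pointwise; summing over $k$ and then grouping the cubes by their minimal stopping parent $Q^a\in\mathcal F_{Q^*}$,
$$\sum_{k}|\Delta_k f(x)|^2=\mathop{\sum_{Q\in\D}}_{Q\subset Q^*}|\Delta_Q f(x)|^2=\sum_{F\in\mathcal F_{Q^*}}S_F(x)^2,\qquad S_F(x)^2:=\mathop{\sum_{Q\in\D}}_{Q^a=F}|\Delta_Q f(x)|^2.$$
(Every $Q\subset Q^*$ has a well-defined minimal stopping parent, so this is a genuine partition of $\{Q\subset Q^*\}$.)

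Next, since $q\in(1,2)$ we have $q/2\le 1$, and subadditivity of $t\mapsto t^{q/2}$ gives
$$\Big\|\Big(\sum_k|\Delta_k f|^2\Big)^{1/2}\Big\|_{L^q(\mu)}^q=\int\Big(\sum_{F\in\mathcal F_{Q^*}}S_F^2\Big)^{q/2}\,d\mu\le\sum_{F\in\mathcal F_{Q^*}}\int S_F^q\,d\mu=\sum_{F\in\mathcal F_{Q^*}}\|S_F\|_{L^q(\mu)}^q.$$
Hence it is enough to prove the single-scale bound $\|S_F\|_{L^q(\mu)}^q\lesssim\mu(F)$ for each $F\in\mathcal F_{Q^*}$, because $\sum_{F\in\mathcal F_{Q^*}}\mu(F)\lesssim\mu(Q^*)$ — this is the Carleson property of Lemma \ref{lem:mescar} applied to $R=Q^*$ (equivalently, sum the geometric decay \eqref{StoppingFk} over the generations $\mathcal F_{Q^*}^j$).

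For the single-scale bound I would replace $S_F$ by a randomised linear combination to which \eqref{twisted-SF-subset} directly applies. Let $(\epsilon_Q)_{Q\in\D}$ be independent random signs, with expectation $\Exp_\epsilon$. Khintchine's inequality (valid for every exponent in $(0,\infty)$), applied pointwise in $x$ and followed by Tonelli's theorem, gives
$$\|S_F\|_{L^q(\mu)}^q\approx\int\Exp_\epsilon\Big|\mathop{\sum_{Q\in\D}}_{Q^a=F}\epsilon_Q\Delta_Q f\Big|^q\,d\mu=\Exp_\epsilon\Big\|\mathop{\sum_{Q\in\D}}_{Q^a=F}\epsilon_Q\Delta_Q f\Big\|_{L^q(\mu)}^q.$$
For each fixed realisation of the signs the constants satisfy $|\epsilon_Q|\le 1$ and $|f|\le 1$, so \eqref{twisted-SF-subset} bounds the integrand by $C\mu(F)$ with $C$ independent of the signs; taking $\Exp_\epsilon$ yields $\|S_F\|_{L^q(\mu)}^q\lesssim\mu(F)$, and the displays above combine to give \eqref{eq:MD-square.function.estimate}.

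The one step that is not pure bookkeeping is this last passage: \eqref{twisted-SF-subset} controls the \emph{linear combination} $\sum_{Q^a=F}\epsilon_Q\Delta_Q f$, whereas what we need is the \emph{square function} $S_F$; because the twisted differences $\Delta_Q f$ with $Q^a=F$ live on nested cubes and therefore overlap, disjointness is unavailable, and the Khintchine randomisation is exactly what bridges this gap. The remaining ingredients — the pointwise formula for $|\Delta_k f|^2$, the elementary inequality $q/2\le 1$, and the packing $\sum_{F\in\mathcal F_{Q^*}}\mu(F)\lesssim\mu(Q^*)$ — are routine.
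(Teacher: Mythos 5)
Your proof is correct and takes essentially the same route as the paper: both arguments rest on Khintchine's inequality, the single-stopping-cube bound \eqref{twisted-SF-subset}, and the packing $\sum_{F\in\mathcal{F}_{Q^*}}\mu(F)\lesssim\mu(Q^*)$ coming from \eqref{StoppingFk}. The only difference is bookkeeping: the paper linearises the whole square function with signs $\eps_k$ attached to generations and then splits over stopping generations inside $L^q(\mu\times\prob)$, whereas you first split pointwise by stopping parent using the subadditivity of $t\mapsto t^{q/2}$ (valid since $q\le 2$) and then apply Khintchine separately for each $F$ --- a harmless reordering.
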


\begin{proof}
By Khinchine's inequality there holds that
$$\Big\| \Big( \sum_{k} |\Delta_{k} f|^2 \Big)^{1/2} \Big\|_{L^q(\mu)} \lesssim \Big\|\sum_{k } \eps_k \Delta_{k} f \Big\|_{L^q(\mu \times \prob)},$$
where $(\eps_k)_{k \in \Z}$ is a random sequence of Rademacher functions, i.e.,  a sequence of independent random variables attaining values $\pm 1$ with an equal probability $\prob(\eps_k=1)=\prob(\eps_k=-1)=1/2$. If we  set $\epsilon_Q=\eps_k$, when $Q \in \D_k$, we have that
\begin{align*}
&\Big\| \sum_{k} \eps_k \sum_{Q\in \D_k: Q \subset Q^*} \Delta_Q f\Big\|_{L^q(\mu \times \prob)}=  \Big\|\sum_{Q\in \D: Q \subset Q^*} \epsilon_Q \Delta_Q f\Big\|_{L^q(\mu \times \prob)}\\
&\leq \sum_{j\geq 0}\Big( \sum_{F \in \mathcal{F}^j_{Q^*}} \Big\| \mathop{\sum_{Q\in \D: Q \subset Q^*}}_{Q^\alpha=F}  \epsilon_Q \Delta_Q f \Big\|^q_{L^q(\mu \times \prob)} \Big)^{1/q} \\
&\lesssim   \sum_{j \geq 0} \Big(\sum_{F \in \mathcal{F}^j_{Q^*}} \mu(F) \Big)^{1/q}\lesssim \mu(Q^*)^{1/q},
\end{align*}
where the second-to-last inequality follows from \eqref{twisted-SF-subset} and $\int d\prob=1$, and the last one from \eqref{StoppingFk}.
\end{proof}

\section{Reductions towards the proof of the key inequality}
We will estimate the quantity
\begin{align*}
\Big[ \int_{Q_0} \Big( \int_0^{\ell(Q_0)} |\theta_t f(x)|^2 \frac{dt}{t} \Big)^{q/2} \,d\mu(x)\Big]^{1/q}
\end{align*}
for an arbitrary fixed cube $Q_0 \subset \R^n$ and for an arbitrary fixed function $f$ satisfying that $|f| \le 1_{Q_0}$ (the choice $f = 1_{Q_0}$ would suffice).
Let $s$ be defined by $2^{s-1} \leq \ell(Q_0) < 2^{s}$. 

\subsection{Reduction to a dyadic setting of good geometric data}\label{reduction.good.R}
For a fixed $w \in (\{0,1\}^n)^\Z$ and $x \in Q_0$ we have that
\begin{align*}
 \int_{0}^{\ell(Q_0)} |\theta_{t}f(x)|^{2} \, \frac{dt}{t} \leq \mathop{\sum_{R \in \mathcal{D}(w)}}_{\ell(R) \leq 2^s} 1_R(x) \int_{\ell(R)/2}^{\ell(R)} |\theta_{t}f(x)|^{2} \, \frac{dt}{t}.
\end{align*}
Recall the constants from \eqref{eq:Intro.qt1}. To prove \eqref{Intro:Carleson} 
we note that by above it is enough to prove that
\begin{equation}\label{eq:red.good1}
E_w \Big[ \int_{\R^n} 1_{Q_0}(x)\Big(  \mathop{\sum_{R \in \mathcal{D}(w)}}_{\ell(R) \le 2^s} 1_R(x) \int_{\ell(R)/2}^{\ell(R)} |\theta_{t}f(x)|^{2} \, \frac{dt}{t} \Big)^{q/2}\,d\mu(x)\Big]^{1/q}
\end{equation}
can be bounded by
\begin{displaymath}
[C_3 (1 +V_{\textup{loc}, q}) + C_2^{-1} \|V\|_{L^q(\mu) \to L^q(\mu)}/2]\mu(3Q_0)^{1/q}.
\end{displaymath}

We can estimate the quantity in \eqref{eq:red.good1} by
\begin{align*}
&E_w \Big[ \int_{\R^n} 1_{Q_0}(x)\Big(  \mathop{\sum_{R \in \mathcal{D}(w)_{\textup{good}}}}_{\ell(R) \le 2^s} 1_R(x) \int_{\ell(R)/2}^{\ell(R)} |\theta_{t}f(x)|^{2} \, \frac{dt}{t} \Big)^{q/2}\,d\mu(x)\Big]^{1/q} \\
&+ E_w \Big[ \int_{\R^n} 1_{Q_0}(x) \Big(  \mathop{\sum_{R \in \mathcal{D}(w)_{\textup{bad}}}}_{\ell(R) \le 2^s} 1_R(x) \int_{\ell(R)/2}^{\ell(R)} |\theta_{t}f(x)|^{2} \, \frac{dt}{t} \Big)^{q/2}\,d\mu(x)\Big]^{1/q}.
\end{align*}
Using $E g^{\alpha} \le (Eg)^{\alpha}$ for $\alpha \in (0,1]$, we see (with $\alpha = 1/q$ and $\alpha = q/2$) that
\begin{align*}
&E_w \Big[ \int_{\R^n} 1_{Q_0}(x)\Big(  \mathop{\sum_{R \in \mathcal{D}(w)_{\textup{bad}}}}_{\ell(R) \le 2^s} 1_R(x) \int_{\ell(R)/2}^{\ell(R)} |\theta_{t}f(x)|^{2} \, \frac{dt}{t} \Big)^{q/2}\,d\mu(x)\Big]^{1/q} \\
&\le \Big[ \int_{\R^n} 1_{Q_0}(x) \Big(  E_w \mathop{\sum_{R \in \mathcal{D}(w)_{\textup{bad}}}}_{\ell(R) \le 2^s} 1_R(x) \int_{\ell(R)/2}^{\ell(R)} |\theta_{t}f(x)|^{2} \, \frac{dt}{t} \Big)^{q/2}\,d\mu(x)\Big]^{1/q}.
\end{align*}
Using the fact that $w \mapsto 1_{\textup{bad}}(R_0 + w)$ is independent of $w \mapsto 1_{R_0 + w}(x)$ for every $R_0 \in \mathcal{D}_0$, and that $E_w 1_{\textup{bad}}(R_0 + w) \le c(r) \to 0$ when $r \to \infty$, we have
\begin{align*}
E_w \Big[ \int_{\R^n} 1_{Q_0}(x)& \Big(  \mathop{\sum_{R \in \mathcal{D}(w)_{\textup{bad}}}}_{\ell(R) \le 2^s} 1_R(x) \int_{\ell(R)/2}^{\ell(R)} |\theta_{t}f(x)|^{2} \, \frac{dt}{t} \Big)^{q/2}\,d\mu(x)\Big]^{1/q} \l  \\ &\le c(r)^{1/2}\|Vf\|_{L^q(\mu)} \le 
(2C_2)^{-1} \|V\|_{L^q(\mu) \to L^q(\mu)} \mu(3Q_0)^{1/q}
\end{align*}
fixing $r \lesssim 1$ large enough (note that $c(r) = C(n, \alpha, m)2^{-r\gamma}$).

We have reduced to showing that uniformly on $w \in (\{0,1\}^n)^\Z$ the quantity
\begin{align*}
\Big[ \int_{\R^n} 1_{Q_0}(x) \Big(  \mathop{\sum_{R \in \mathcal{D}(w)_{\textup{good}}}}_{\ell(R) \le 2^s} 1_R(x) \int_{\ell(R)/2}^{\ell(R)} |\theta_{t}f(x)|^{2} \, \frac{dt}{t} \Big)^{q/2}\,d\mu(x)\Big]^{1/q} 
\end{align*}
can be dominated by $C_3(1 + V_{\textup{loc},q}) \mu(3Q_0)^{1/q}$. We fix one $w$ and write $\D = \D(w)$.

\subsection{Decomposition of $f$}\label{reduction.begin}
Since $f \in L^q(\mu)$ is supported in $Q_0$ we may expand
\begin{equation}
f = \mathop{\mathop{\sum_{Q^* \in \mathcal{D}}}_{\ell(Q^*) = 2^s}}_{Q_0 \cap Q^* \ne \emptyset} \mathop{\sum_{Q \in \mathcal{D}}}_{Q \subset Q^*} \Delta_Q f.
\label{eq:md-decomp}\end{equation}
Notice that there are only finitely many such $Q^*$ and always $Q^* \subset 3 Q_0$.
Define
\begin{displaymath}
A_\kappa f(x) :=  \Big(\mathop{\sum_{R \in \mathcal{D}_{\textup{good}}}}_{2^{-\kappa} < \ell(R) \le 2^s} 1_R(x) \int_{\ell(R)/2}^{\ell(R)} |\theta_{t}f(x)|^{2} \, \frac{dt}{t} \Big)^{1/2}
\end{displaymath}
and
\begin{displaymath}
A f(x) :=  \Big(\mathop{\sum_{R \in \mathcal{D}_{\textup{good}}}}_{\ell(R) \le 2^s} 1_R(x) \int_{\ell(R)/2}^{\ell(R)} |\theta_{t}f(x)|^{2} \, \frac{dt}{t} \Big)^{1/2}.
\end{displaymath}
Notice that for $x \in Q_0$ there holds that
\begin{align*}
&\Big|Af(x) - A_\kappa\Big( \sum_{Q^*} \mathop{\sum_{Q \subset Q^*}}_{\ell(Q) > 2^{-\kappa}} \Delta_Q f\Big)(x)\Big| \\
&\le |Af(x) - A_{\kappa}f(x)| +  \Big|A_{\kappa}f(x) - A_{\kappa}\Big( \sum_{Q^*} \mathop{\sum_{Q \subset Q^*}}_{\ell(Q) > 2^{-\kappa}} \Delta_Q f\Big)(x)\Big| \\
&\le  \Big(\mathop{\sum_{R \in \mathcal{D}_{\textup{good}}}}_{\ell(R) \le 2^{-\kappa}} 1_R(x) \int_{\ell(R)/2}^{\ell(R)} |\theta_{t}f(x)|^{2} \, \frac{dt}{t} \Big)^{1/2} + A_{\kappa}\Big( f - \sum_{Q^*} \mathop{\sum_{Q \subset Q^*}}_{\ell(Q) > 2^{-\kappa}} \Delta_Q f\Big)(x) \\
&\le \Big( \int_0^{2^{-\kappa}}  |\theta_{t}f(x)|^{2} \, \frac{dt}{t} \Big)^{1/2} + V\Big( f - \sum_{Q^*} \mathop{\sum_{Q \subset Q^*}}_{\ell(Q) > 2^{-\kappa}} \Delta_Q f\Big)(x).
\end{align*}
It follows by dominated convergence and the fact that $V$ is bounded on $L^q(\mu)$ that
\begin{align*}
\lim_{\kappa \to \infty} \Big\| 1_{Q_0}\Big( Af -  A_{\kappa} \Big( \sum_{Q^*} \mathop{\sum_{Q \subset Q^*}}_{\ell(Q) > 2^{-\kappa}} \Delta_Q f\Big) \Big)\Big\|_{L^q(\mu)} = 0.
\end{align*}
We have reduced to showing that
\begin{equation}\label{heart.estimate}
\Big[ \int_{\R^n} 1_{Q_0}(x) \Big(  \mathop{\sum_{R \in \mathcal{D}_{\textup{good}}}}_{2^{-\kappa} < \ell(R) \le 2^s} 1_R(x) \int_{\ell(R)/2}^{\ell(R)} \Big| \mathop{\mathop{\sum_{Q \in \mathcal{D}}}_{Q \subset Q^*}}_{\ell(Q) > 2^{-\kappa}} \theta_{t} \Delta_Q f(x)\Big|^{2} \, \frac{dt}{t}\Big)^{q/2}\,d\mu(x)\Big]^{1/q}
\end{equation}
can be dominated by $C_3(1 + V_{\textup{loc},q}) \mu(Q^*)^{1/q}$ for every fixed $\kappa$ and for every fixed $Q^*$. We used the fact that 
\begin{displaymath}
\theta_t\Big( \sum_{Q^*} \mathop{\sum_{Q \subset Q^*}}_{\ell(Q) > 2^{-\kappa}} \Delta_Q f \Big) =  \sum_{Q^*} \mathop{\sum_{Q \subset Q^*}}_{\ell(Q) > 2^{-\kappa}} \theta_t \Delta_Q f,
\end{displaymath}
since the sum is finite for every $\kappa$. To fix only one $Q^* \subset 3Q_0$ we used the fact that $\#\{Q^* \in \mathcal{D}:\, \ell(Q^*) = 2^s \textup{ and } Q^* \cap Q_0 \ne \emptyset\} \lesssim 1$.

\subsection{Splitting the summation}
We will split the sum \eqref{heart.estimate} in to the following four pieces:
\begin{itemize}
\item[$Q$:] $\ell(Q)< \ell(R)$;
\item[$Q$:] $\ell(Q)\geq \ell(R)$ and $d(Q,R)> \ell(R)^\gamma \ell(Q)^{1-\gamma}$;
\item[$Q$:] $\ell(R) \leq \ell(Q) \leq 2^r \ell(R)$ and $d(Q,R) \leq \ell(R)^\gamma \ell(Q)^{1-\gamma}$;
\item[$Q$:] $\ell(Q)>2^r\ell(R)$ and $d(Q,R) \leq \ell(R)^\gamma \ell(Q)^{1-\gamma}$.
\end{itemize}
We call the second sum the separated sum, the third sum the diagonal sum and the last sum the nested sum. Thus, \eqref{heart.estimate} is bounded by
$$I_{\ell(Q)<\ell(R)} + I_{\textup{sep}} + I_{\textup{diag}} + I_{\textup{nested}}.$$
We bound these four pieces in the four subsequent chapters.
\begin{rem}
The $\kappa$ and the $s$ are fixed and sometimes such implicit conditions on the generations of the cubes are not written down.
\end{rem}
\section{The case $\ell(Q)< \ell(R)$}\label{section:Q<R}
We start by proving the following lemma.
\begin{lem}\label{lemma.kernel.est.Q<R}
Let $Q, R \in \D$ be such that $\ell(R)/\ell(Q)=2^\ell$ and $D(Q,R)/\ell(R) \sim 2^j$ for $\ell \ge 1$ and $j \ge 0$. Then, if $S_0=Q^{(\ell+j+\theta(j))}$, $ x\in R$ and $y\in Q$, there holds that
\begin{equation}\label{kernel.est.Q<R}
|s_t(x,y)-s_t(x,c_Q)| \lesssim 2^{-\alpha \ell} 2^{-3\alpha j/4} \ell(S_0)^{-m},  \quad  t \in (\ell(R)/2, \ell(R)).
\end{equation}
Here $c_Q$ denotes the centre of $Q$.
\end{lem}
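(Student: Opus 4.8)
The plan is to prove the kernel estimate \eqref{kernel.est.Q<R} by combining the $y$-H\"older continuity \eqref{eq:yhol} of $s_t$ with the geometric containment from Lemma \ref{lemma.R<S(Q)}(1). The starting point is that since $t \in (\ell(R)/2, \ell(R))$ and $\ell(Q) = 2^{-\ell}\ell(R) \le \ell(R)/2 \le t$ (using $\ell \ge 1$), we certainly have $|y - c_Q| \le \diam(Q) \lesssim \ell(Q) \le t/2$ after possibly adjusting constants (one may need to pass to a comparable radius, or invoke the continuity estimate finitely many times along a chain from $y$ to $c_Q$ of steps of length $< t/2$; this is a routine device). Hence \eqref{eq:yhol} applies and gives
\begin{displaymath}
|s_t(x,y) - s_t(x,c_Q)| \lesssim \frac{|y - c_Q|^{\alpha}}{(t + |x-y|)^{m+\alpha}} \lesssim \frac{\ell(Q)^{\alpha}}{(t+|x-y|)^{m+\alpha}}.
\end{displaymath}

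Next I would estimate the denominator from below. We have $x \in R$ and $y \in Q$, so $|x-y| \ge d(Q,R)$, and also $t + |x-y| \gtrsim t \approx \ell(R)$. Recalling $D(Q,R) = d(Q,R) + \ell(Q) + \ell(R) \sim 2^j \ell(R)$, we get $t + |x-y| \gtrsim \ell(R) + d(Q,R) \gtrsim 2^j \ell(R) \cdot c$ for an appropriate constant — more precisely $t + |x-y| \gtrsim \max(\ell(R), d(Q,R)) \gtrsim D(Q,R)/3 \sim 2^j\ell(R)$. Therefore
\begin{displaymath}
|s_t(x,y) - s_t(x,c_Q)| \lesssim \frac{\ell(Q)^{\alpha}}{(2^j\ell(R))^{m+\alpha}} = 2^{-\alpha \ell}\, 2^{-(m+\alpha)j}\, \ell(R)^{-m},
\end{displaymath}
using $\ell(Q) = 2^{-\ell}\ell(R)$.

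The final step is to convert the bound into the stated form involving $\ell(S_0)^{-m}$ with $S_0 = Q^{(\ell + j + \theta(j))}$. Since $R \in \D_{\textup{good}}$, Lemma \ref{lemma.R<S(Q)}(1) gives $R \subset S_0$, and by definition $\ell(S_0) = 2^{\ell + j + \theta(j)}\ell(Q) = 2^{j + \theta(j)}\ell(R)$. Recalling $\theta(j) = \lceil (\gamma j + r)/(1-\gamma)\rceil \lesssim \gamma j/(1-\gamma) + 1$, and that $\gamma$ was chosen so that $m\gamma/(1-\gamma) \le \alpha/4$, we have
\begin{displaymath}
\ell(S_0)^{-m} = 2^{-m(j+\theta(j))}\ell(R)^{-m} \gtrsim 2^{-mj}\, 2^{-m\gamma j/(1-\gamma)}\ell(R)^{-m} \gtrsim 2^{-mj - \alpha j/4}\ell(R)^{-m},
\end{displaymath}
so that $\ell(R)^{-m} \lesssim 2^{mj + \alpha j/4}\ell(S_0)^{-m}$. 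Plugging this into the previous display yields
\begin{displaymath}
|s_t(x,y) - s_t(x,c_Q)| \lesssim 2^{-\alpha\ell}\, 2^{-(m+\alpha)j}\, 2^{mj + \alpha j/4}\ell(S_0)^{-m} = 2^{-\alpha\ell}\, 2^{-3\alpha j/4}\ell(S_0)^{-m},
\end{displaymath}
which is exactly \eqref{kernel.est.Q<R}. I expect the only mild subtlety to be the justification of applying \eqref{eq:yhol} when $|y - c_Q|$ is merely comparable to, rather than strictly less than, $t/2$; this is handled by the standard chaining trick and loses only a constant, so it is not a real obstacle. The bookkeeping with $\theta(j)$ and the choice of $\gamma$ is the one place where the specific normalization of constants matters, but it is precisely engineered to produce the exponent $3\alpha/4$.
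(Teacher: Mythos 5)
Your proof is correct and follows essentially the same route as the paper: apply the $y$-H\"older estimate \eqref{eq:yhol} (noting $|y-c_Q|\lesssim \ell(Q)\le \ell(R)/2<t$, where the paper simply observes $|y-c_Q|\le \ell(Q)/2<t/2$ so no chaining is really needed), bound the denominator below by $D(Q,R)\sim 2^j\ell(R)$, and then trade $\ell(R)^{-m}$ for $\ell(S_0)^{-m}$ at the cost of $2^{mj+\alpha j/4}$ using the definition of $\theta(j)$ and $m\gamma/(1-\gamma)\le\alpha/4$. The invocation of goodness of $R$ is superfluous here (it is only needed later to conclude $R\subset S_0$), but this does not affect the argument.
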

\begin{proof}
First, notice that for every $y\in Q$ we have that $|y-c_Q| \leq \ell(Q)/2 \leq \ell(R)/4 < t/2$. Therefore, we may use \eqref{eq:yhol} to obtain
$$|s_t(x,y)-s_t(x,c_Q)| \lesssim \frac{\ell(Q)^\alpha}{(\ell(R) +d(Q,R))^{m+\alpha}} \lesssim  \frac{\ell(Q)^\alpha}{D(Q,R)^{m+\alpha}},$$
where we used that obviously $D(Q,R) \lesssim \ell(R) +d(Q,R)$ in our situation.
Next, observe that
\begin{displaymath}
\frac{\ell(Q)^\alpha}{D(Q,R)^{m+\alpha}} \approx 2^{-\alpha \ell} 2^{-(m+\alpha)j} \ell(R)^{-m}.
\end{displaymath}
Using the estimate $m \gamma/ (1-\gamma) < \alpha/4$ and the definition of $S_0$ we see that
\begin{displaymath}
\ell(S_0)^{-m} \gtrsim 2^{-mj - \alpha j /4} \ell(R)^{-m}.
\end{displaymath}
Combining we get \eqref{kernel.est.Q<R}.
\end{proof}
Let $Q \in \D$ and $R \in \D_\textup{good}$ be such that $\ell(R)/\ell(Q)=2^\ell$ and $D(Q,R)/\ell(R) \sim 2^j$ for $\ell \ge 1$ and $j \ge 0$.
Assume also that $(x,t) \in W_R$. Since $\ell(Q) < \ell(R) \le 2^s$, we have $\int \Delta_Q f\,d\mu= 0$. Using this we write
$$|\theta_t \Delta_Q f (x)|=\Big| \int_Q [s_t(x,y)-s_t(x,c_Q)] \Delta_Q  f(y)\, d\mu(y)\Big|.$$ 
Using the estimate \eqref{kernel.est.Q<R} we now see that
\begin{displaymath}
|\theta_t \Delta_Q f (x)| \lesssim 2^{-\alpha \ell} 2^{-3\alpha j/4} \ell(S_0)^{-m} \int_Q |\Delta_Q f(y)| d\mu(y),
\end{displaymath}
where $Q, R \subset S_0 :=Q^{(\ell+j+\theta(j))}$ (by (1) of Lemma \ref{lemma.R<S(Q)}).

We can now see that $I_{\ell(Q) < \ell(R)}$ can be dominated by
\begin{equation*}
\sum_{j, \ell}2^{-\frac{\alpha}{2}(\ell+\frac{3}{4}j)} \Big\|  \Big(\sum_{k \le s} \sum_{R \in \mathcal{D}_{k, \textup{good}}}1_{R} \Big( \mathop{\sum_{Q \in \mathcal{D}_{k-\ell}:\, Q \subset Q^*}}_{D(Q,R)/\ell(R)\sim 2^j }  \ell(S_0)^{-m} \int |\Delta_Q f| d\mu \Big)^2\Big)^{1/2} \Big\|_{L^q(\mu)}.
\end{equation*}
Let us fix $j, \ell, k$. Set $\tau_j(k):= j + \theta(j) + k = \textup{gen}(S_0)$.
We have by disjointness considerations and the fact that $Q, R \subset S_0$ that
\begin{align*}
\sum_{R \in \mathcal{D}_{k, \textup{good}}} &1_{R} \Big( \mathop{\sum_{Q \in \mathcal{D}_{k-\ell}:\, Q \subset Q^*}}_{D(Q,R)/\ell(R)\sim 2^j } \ell(S_0)^{-m} \int |\Delta_Q f|\, d\mu \Big)^2\\
&=\Big( \sum_{R \in \mathcal{D}_{k, \textup{good}}}1_{R} \mathop{\sum_{Q \in \mathcal{D}_{k-\ell}:\, Q \subset Q^*}}_{D(Q,R)/\ell(R)\sim 2^j } 2^{-m \tau_j(k)} \int |\Delta_Q f|\, d\mu \Big)^2\\
&=\Big( \sum_{S \in \D_{\tau_j(k)}}\mathop{\sum_{R \in \mathcal{D}_{k, \textup{good}}}}_{R\subset S}1_{R} \mathop{\sum_{Q \in \mathcal{D}_{k-\ell}:\, Q \subset Q^*}}_{D(Q,R)/\ell(R)\sim 2^j } 2^{-m \tau_j(k)}\int |\Delta_Q f|\, d\mu \Big)^2 \\
&\lesssim \Big( \sum_{S \in \D_{\tau_j(k)}} \frac{1_S}{\mu(S)} \int_S |\Delta_{k-\ell} f| d\mu \Big)^2 \\ &= [E_{\tau_j(k)}(|\Delta_{k-\ell} f|)]^2.
\end{align*}

Note that for fixed $j, \ell$ there holds by Stein's inequality (Lemma \ref{lemma-Stein}) and estimate \eqref{eq:MD-square.function.estimate} that
\begin{align*}
\Big\|  \Big(\sum_{k \le s} [E_{\tau_j(k)}(|\Delta_{k-\ell} f|)]^2 \Big)^{1/2}\Big\|_{L^q(\mu)}
%&= \Big\|  \Big(\sum_{k \le s-\ell} [E_{\tau_j(k+\ell)}(|\Delta_k f|)]^2 \Big)^{1/2}\Big\|_{L^q(\mu)} \\
&\lesssim \Big\|  \Big(\sum_{k \le s} |\Delta_k f|^2 \Big)^{1/2}\Big\|_{L^q(\mu)} 
\lesssim \mu(Q^*)^{1/q}.
\end{align*}
We may now conclude that $I_{\ell(Q) < \ell(R)} \lesssim \mu(Q^*)^{1/q}$.

\section{The separated sum}\label{section:separated}
We first prove the following lemma.
\begin{lem}\label{lemma.kernel.est.separ}
Let $Q, R \in \D$ be such that $d(Q,R) > \ell(R)^{\gamma}\ell(Q)^{1-\gamma}$,
$\ell(Q)/\ell(R)=2^\ell$ and $D(Q,R)/\ell(Q) \sim 2^j $ for $\ell, j \geq 0$.
Then, if $S_0=Q^{(j+\theta(j+\ell))}$, $x\in R$ and $y\in Q$, there holds that
\begin{equation}\label{kernel.est.separ}
|s_t(x,y)| \lesssim 2^{-\alpha \ell/4} 2^{- 3\alpha j/4} \ell(S_0)^{-m},\quad  t \in (\ell(R)/2, \ell(R)).
\end{equation}
\end{lem}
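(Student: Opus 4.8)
The plan is to rely solely on the size estimate \eqref{eq:size}, because in the separated regime $\ell(Q)\ge\ell(R)\approx t$, so one cannot usefully replace $s_t(x,y)$ by $s_t(x,y)-s_t(x,c_Q)$ and invoke \eqref{eq:yhol} (that would require $|y-c_Q|<t/2$, which generally fails). First I would observe that for $x\in R$, $y\in Q$ and $t\in(\ell(R)/2,\ell(R))$ we have $t^\alpha\le\ell(R)^\alpha$ and $t+|x-y|\ge|x-y|\ge d(Q,R)$, so
\begin{displaymath}
|s_t(x,y)|\lesssim\frac{\ell(R)^\alpha}{d(Q,R)^{m+\alpha}}.
\end{displaymath}
Then I would bring in two elementary facts: the separation hypothesis gives $d(Q,R)>\ell(R)^\gamma\ell(Q)^{1-\gamma}=2^{\ell(1-\gamma)}\ell(R)$, and the long-distance hypothesis gives $D(Q,R)\ge 2^j\ell(Q)$, whence for $j\ge 2$
\begin{displaymath}
d(Q,R)=D(Q,R)-\ell(Q)-\ell(R)\ge(2^j-2)\ell(Q)\ge 2^{j-1+\ell}\ell(R).
\end{displaymath}
Also, unwinding $\theta$ one has $\ell(S_0)=2^{\,j+\ell+\theta(j+\ell)}\ell(R)\approx 2^{(j+\ell)/(1-\gamma)}\ell(R)$. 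Combining, it suffices to show that
\begin{displaymath}
\Big(\frac{d(Q,R)}{\ell(R)}\Big)^{m+\alpha}\gtrsim 2^{\,\alpha\ell/4}\,2^{\,3\alpha j/4}\,2^{\,m(j+\ell)/(1-\gamma)}.
\end{displaymath}

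I would then split into two regimes. For $j\le 1$, the factors $2^{3\alpha j/4}$ and $2^{mj/(1-\gamma)}$ are bounded, so plugging in $d(Q,R)/\ell(R)>2^{\ell(1-\gamma)}$ the estimate comes down to the exponent inequality $(1-\gamma)(m+\alpha)\ge\alpha/4+m/(1-\gamma)$; rearranging, this is $\gamma(m+\alpha)+\frac{m\gamma}{1-\gamma}\le\frac{3\alpha}{4}$, which is exactly the sum of the two defining smallness bounds on $\gamma$, namely $\gamma(m+\alpha)\le\alpha/2$ (from $\gamma\le\alpha/(2m+2\alpha)$) and $\frac{m\gamma}{1-\gamma}\le\alpha/4$. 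For $j\ge 2$, I would instead use $d(Q,R)/\ell(R)\ge 2^{j-1+\ell}$, so the estimate reduces to $(j+\ell)\big((m+\alpha)-\frac{m}{1-\gamma}\big)\ge\frac{\alpha\ell}{4}+\frac{3\alpha j}{4}$; since $(m+\alpha)-\frac{m}{1-\gamma}=\alpha-\frac{m\gamma}{1-\gamma}\ge\frac{3\alpha}{4}$, the left side is at least $\frac{3\alpha}{4}(j+\ell)\ge\frac{\alpha\ell}{4}+\frac{3\alpha j}{4}$, finishing the argument.

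The one place that needs care — the main obstacle — is the exponent bookkeeping. The ancestor $S_0$ sits about $(j+\ell)/(1-\gamma)$ generations above $Q$, so the target factor $\ell(S_0)^{-m}$ already consumes strictly more than $m$ powers of $d(Q,R)$, and one must verify that the leftover $\alpha$ powers of $d(Q,R)$ still pay both for that deficit $m(\frac{1}{1-\gamma}-1)$ and for the claimed gains $2^{-\alpha\ell/4}$ and $2^{-3\alpha j/4}$. This balances precisely because of the quantitative smallness of $\gamma$ fixed at the outset; and it is what dictates the case split into $j\le 1$ (where the separation inequality alone has to deliver the $\ell$-decay) versus $j\ge 2$ (where the long-distance relation delivers both decays). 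All implicit constants depend only on $n,m,\alpha$ (through $\gamma$ and $r$).
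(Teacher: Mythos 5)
Your proof is correct: the size estimate, the identification $\ell(S_0)=2^{j+\ell+\theta(j+\ell)}\ell(R)\approx 2^{(j+\ell)/(1-\gamma)}\ell(R)$ (where only the upper bound on $\ell(S_0)$, coming from the ceiling in $\theta$ and $r\lesssim 1$, is actually needed, and that is the direction you use), and the exponent arithmetic in both regimes check out, with the two smallness conditions on $\gamma$ invoked exactly where they must be. The only divergence from the paper is organizational: instead of your case split on $j$, the paper quotes the single standard estimate
\begin{displaymath}
\frac{\ell(R)^{\alpha}}{d(Q,R)^{m+\alpha}} \lesssim \frac{\ell(Q)^{\alpha/2}\ell(R)^{\alpha/2}}{D(Q,R)^{m+\alpha}},
\end{displaymath}
valid since $(m+\alpha)\gamma\le\alpha/2$, $\ell(R)\le\ell(Q)$ and $d(Q,R)>\ell(R)^{\gamma}\ell(Q)^{1-\gamma}$; this converts half of the $\alpha$-gain into $2^{-\alpha\ell/2}$ and replaces $d(Q,R)$ by the long distance $D(Q,R)\approx 2^{j}\ell(Q)$, so that small and large $j$ are handled simultaneously, and then compares $\ell(Q)^{-m}$ with $\ell(S_0)^{-m}$ via $m\gamma/(1-\gamma)\le\alpha/4$, exactly as you do. Your two-case argument ($j\le 1$: separation alone; $j\ge 2$: $d(Q,R)\ge(2^j-2)\ell(Q)$) is in effect a self-contained proof of that standard fact, so the two proofs are equivalent in substance; the paper's version is shorter, while yours makes explicit that the separation hypothesis is only genuinely needed when $j$ is bounded.
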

\begin{proof}
We begin by noting that
\begin{align*}
|s_t(x,y)| \lesssim \frac{\ell(R)^{\alpha}}{d(Q,R)^{m+\alpha}} \lesssim \frac{\ell(Q)^{\alpha/2}\ell(R)^{\alpha/2}}{D(Q,R)^{m+\alpha}} 
= 2^{-\alpha \ell/2} 2^{-(m+\alpha)j} \ell(Q)^{-m}.
\end{align*}
The second estimate is a standard fact and follows since $(m+\alpha)\gamma \leq \alpha/2$, $\ell(R) \le \ell(Q)$ and $d(Q,R) > \ell(R)^{\gamma}\ell(Q)^{1-\gamma}$.

On the other hand it is easy to see that
\begin{displaymath}
\ell(S_0)^{-m} \gtrsim 2^{-mj - (\ell+j)\alpha/4}\ell(Q)^{-m}.
\end{displaymath}
This uses just the definition of $S_0$ and the bound  $m \gamma/ (1-\gamma) < \alpha/4$.
Combining the estimates we have \eqref{kernel.est.separ}.
\end{proof}

Let $Q \in \D$, $R \in \D_{\textup{good}}$ be such that $d(Q,R) > \ell(R)^{\gamma}\ell(Q)^{1-\gamma}$,
$\ell(Q)/\ell(R)=2^\ell$ and $D(Q,R)/\ell(Q) \sim 2^j $ for $\ell, j \geq 0$. If $(x,t) \in W_R$ we have by \eqref{kernel.est.separ} that
$$|\theta_t \Delta_Q f (x)|\lesssim 2^{-(\ell+j)\alpha/4}\ell(S_0)^{-m} \int |\Delta_Q f(y)|\,d\mu(y),$$ 
where $Q, R \subset S_0 :=Q^{(j+\theta(j+\ell))}$ (by (2) of Lemma \ref{lemma.R<S(Q)}).

We may deduce that $I_{\textup{sep}}$ can be dominated by
\begin{equation*}
\sum_{j, \ell}2^{-\alpha(\ell+j)/4} \Big\| \Big(\sum_{k \le s} \sum_{R \in \mathcal{D}_{k, \textup{good}}} 1_{R} \Big( \mathop{\sum_{Q \in \mathcal{D}_{k+\ell}: Q \subset Q^*}}_{D(Q,R)\sim 2^j \ell(Q)}  \ell(S_0)^{-m}\int |\Delta_Q f|\,d\mu \Big)^2\Big)^{1/2} \Big\|_{L^q(\mu)}.
\end{equation*}
A completely analogous estimate to that of the previous section shows that $I_{\textup{sep}} \lesssim \mu(Q^*)^{1/q}$.

\section{The diagonal sum}\label{section:diagonal}
Let $Q \in \D$, $R \in \D_{\textup{good}}$ be such that $\ell(Q) / \ell(R) = 2^{\ell}$ and $D(Q,R) / \ell(Q) \sim 2^j$. Since we are in the diagonal summation $I_{\textup{diag}}$ we have that
$\ell, j \lesssim 1$. If $(x,t) \in W_R$ we have that
\begin{displaymath}
|s_t(x,y)| \lesssim t^{-m} \approx \ell(R)^{-m} \approx \ell(S_0)^{-m},
\end{displaymath}
where $Q, R \subset S_0 :=Q^{(j+\theta(j+\ell))}$ (by (2) of Lemma \ref{lemma.R<S(Q)}). It is now clear by the previous arguments that
$I_{\textup{diag}} \lesssim \mu(Q^*)^{1/q}$.

\section{The nested sum}\label{section:nested}
In this case one uses the goodness of $R$ to conclude that one must actually have that $R \subset Q$.
Therefore, things reduce to proving that
\begin{equation}\label{Inested}
 \Big\| 1_{Q_0} \Big( \mathop{\sum_{R \in \mathcal{D}_{\textup{good}}:\, R \subset Q^*}}_{2^{-\kappa} < \ell(R) < 2^{s-r}} 
 1_{R} \int_{\ell(R)/2}^{\ell(R)} \Big| \mathop{\sum_{\ell=r+1}^{s-\textup{gen(R)}}} \theta_t \Delta_{R^{(\ell)}}  f \Big|^2\,
 \frac{dt}{t}\Big)^{1/2} \Big\|_{L^q(\mu)} \lesssim \mu(Q^*)^{1/q}.
\end{equation}
We bound the right hand side of \eqref{Inested} by $I_{\textup{nested}, 1} + I_{\textup{nested}, 2}$, where
\begin{align*}
I_{\textup{nested}, 1} = \Big\| 1_{Q_0} \Big( \mathop{\sum_{R \in \mathcal{D}_{\textup{good}}:\, R \subset Q^*}}_{2^{-\kappa} < \ell(R) < 2^{s-r}} 
 1_{R} \int_{\ell(R)/2}^{\ell(R)} \Big| \mathop{\sum_{\ell=r+1}^{s-\textup{gen(R)}}} \theta_t (1_{R^{(\ell)} \setminus R^{(\ell-1)}}\Delta_{R^{(\ell)}}  f) \Big|^2\,
 \frac{dt}{t}\Big)^{1/2} \Big\|_{L^q(\mu)} 
\end{align*}
and
\begin{align*}
I_{\textup{nested}, 2} = \Big\| 1_{Q_0} \Big( \mathop{\sum_{R \in \mathcal{D}_{\textup{good}}:\, R \subset Q^*}}_{2^{-\kappa} < \ell(R) < 2^{s-r}} 
 1_{R} \int_{\ell(R)/2}^{\ell(R)} \Big| \mathop{\sum_{\ell=r+1}^{s-\textup{gen(R)}}} \theta_t (1_{R^{(\ell-1)}}\Delta_{R^{(\ell)}}  f) \Big|^2\,
 \frac{dt}{t}\Big)^{1/2} \Big\|_{L^q(\mu)}.
\end{align*}

\subsection{The sum $I_{\textup{nested}, 1}$}
The following lemma is the key to handling this sum.
\begin{lem}
For $\ell \ge r+1$ and $R \in \mathcal{D}_{k, \textup{good}}$ we have for $(x,t) \in W_R$ that there holds that
\begin{displaymath}
|\theta_t(1_{R^{(\ell)} \setminus R^{(\ell-1)}} \Delta_{R^{(\ell)}}f)(x)| \lesssim 2^{-\alpha \ell/2} 2^{-(k+\ell)m} \int |\Delta_{R^{(\ell)}} f(y)| \,d\mu(y).
\end{displaymath}
\end{lem}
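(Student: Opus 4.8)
The plan is to exploit two facts in combination: first, that $1_{R^{(\ell)}\setminus R^{(\ell-1)}}\Delta_{R^{(\ell)}}f$ is supported away from $R$ at scale comparable to $\ell(R^{(\ell-1)})=2^{k+\ell-1}$, so that the kernel size estimate \eqref{eq:size} gives decay; and second, that $\Delta_{R^{(\ell)}}f$ has a good $L^1(\mu)$ bound on its support. Fix $R\in\mathcal{D}_{k,\textup{good}}$ and $(x,t)\in W_R$, so $t\approx\ell(R)=2^k$ and $x\in R$. Since $R$ is good and $\ell\ge r+1$ with $2^{r(1-\gamma)}\ge 10$, the cube $R$ sits well inside $R^{(\ell)}$; in particular, for $y$ in the support of $1_{R^{(\ell)}\setminus R^{(\ell-1)}}\Delta_{R^{(\ell)}}f$, i.e.\ $y\in R^{(\ell)}\setminus R^{(\ell-1)}$, the goodness of $R$ relative to $\partial R^{(\ell-1)}$ forces $|x-y|\gtrsim d(R,\partial R^{(\ell-1)})\gtrsim \ell(R)^{\gamma}\ell(R^{(\ell-1)})^{1-\gamma}=2^{k\gamma}2^{(k+\ell-1)(1-\gamma)}$.

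With that lower bound on $|x-y|$ in hand, I would write
\begin{displaymath}
|\theta_t(1_{R^{(\ell)}\setminus R^{(\ell-1)}}\Delta_{R^{(\ell)}}f)(x)|
\le \int_{R^{(\ell)}\setminus R^{(\ell-1)}} |s_t(x,y)|\,|\Delta_{R^{(\ell)}}f(y)|\,d\mu(y),
\end{displaymath}
and bound $|s_t(x,y)|$ using \eqref{eq:size}. Since $t\approx 2^k$ and $|x-y|\gtrsim 2^{k\gamma+(k+\ell-1)(1-\gamma)}\ge 2^{k+(\ell-1)(1-\gamma)}$ (using $k\gamma+(k+\ell-1)(1-\gamma)=k+(\ell-1)(1-\gamma)$), one gets
\begin{displaymath}
|s_t(x,y)|\lesssim \frac{t^\alpha}{(t+|x-y|)^{m+\alpha}}\lesssim \frac{2^{k\alpha}}{2^{(k+(\ell-1)(1-\gamma))(m+\alpha)}}
= 2^{-(\ell-1)(1-\gamma)(m+\alpha)}\,2^{-k m}.
\end{displaymath}
Since $\gamma\le\alpha/(2m+2\alpha)<1/2$ we have $(1-\gamma)(m+\alpha)\ge (m+\alpha)/2\ge m+\alpha/2$, so that $2^{-(\ell-1)(1-\gamma)(m+\alpha)}\lesssim 2^{-\ell m}2^{-\alpha\ell/2}$ after absorbing the harmless shift from $\ell-1$ to $\ell$ into the implicit constant. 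Combining, $|s_t(x,y)|\lesssim 2^{-\alpha\ell/2}2^{-(k+\ell)m}$ uniformly in $y\in R^{(\ell)}\setminus R^{(\ell-1)}$ and in $(x,t)\in W_R$. Pulling this constant out of the integral and extending the domain of integration back to all of $\mathbb{R}^n$ (legitimate since the integrand is nonnegative and $\Delta_{R^{(\ell)}}f$ is supported in $R^{(\ell)}$) yields exactly the claimed inequality.

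The main (and essentially only) obstacle is the careful bookkeeping around the goodness condition: one must check that $\ell\ge r+1$ together with the defining inequality $2^{r(1-\gamma)}\ge 10$ guarantees that $R$ is \emph{not} within distance $\ell(R)^\gamma\ell(R^{(\ell-1)})^{1-\gamma}$ of $\partial R^{(\ell-1)}$, so that any $y\notin R^{(\ell-1)}$ is genuinely separated from $x\in R$ at the stated scale; this is where the goodness of $R$ is used, and it is the reason the summation over $R\subset Q^*$ in \eqref{Inested} only runs over good $R$. Everything else is a routine application of the size estimate \eqref{eq:size} and the exponent arithmetic dictated by the choice of $\gamma$ in Section~3. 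The bound is clean because we only need the \emph{size} kernel estimate here, not the smoothness estimate \eqref{eq:yhol}; the smoothness is saved for the companion term $I_{\textup{nested},2}$, where cancellation of $\Delta_{R^{(\ell)}}f$ must be exploited.
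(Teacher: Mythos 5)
Your argument is in substance the same as the paper's: the paper decomposes $R^{(\ell)}\setminus R^{(\ell-1)}$ into the siblings $S\in\textup{ch}(R^{(\ell)})$, $S\neq R^{(\ell-1)}$, and uses goodness in the form $d(R,S)\ge \ell(R)^{\gamma}\ell(S)^{1-\gamma}$ together with the size estimate \eqref{eq:size}, while you use goodness with respect to $\partial R^{(\ell-1)}$ to lower bound $|x-y|$ for all $y\notin R^{(\ell-1)}$ at once; with $\ell(R^{(\ell-1)})=2^{\ell-1}\ell(R)\ge 2^{r}\ell(R)$ this is the same mechanism, and both routes reduce to the exponent condition $\gamma(m+\alpha)\le\alpha/2$.

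One step of your exponent arithmetic is, however, wrong as written: the chain $(1-\gamma)(m+\alpha)\ge (m+\alpha)/2\ge m+\alpha/2$ fails at the second inequality, since $(m+\alpha)/2=m/2+\alpha/2<m+\alpha/2$ for $m>0$; the fact that $\gamma<1/2$ is not enough here. The repair is immediate and uses exactly the hypothesis you already quote: $\gamma\le\alpha/(2m+2\alpha)$ means $\gamma(m+\alpha)\le\alpha/2$, hence
\begin{displaymath}
(1-\gamma)(m+\alpha)=m+\alpha-\gamma(m+\alpha)\ge m+\tfrac{\alpha}{2},
\end{displaymath}
which is what you need to conclude $2^{-(\ell-1)(1-\gamma)(m+\alpha)}\lesssim 2^{-\ell m}2^{-\alpha\ell/2}$ and hence the claimed bound. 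With that one-line correction your proof is complete and matches the paper's. (A side remark: your closing comment that the smoothness estimate \eqref{eq:yhol} is ``saved'' for $I_{\textup{nested},2}$ is not accurate --- that term is handled via the stopping time, accretivity and the testing condition/paraproduct, not via kernel smoothness, which in this paper is used in the $\ell(Q)<\ell(R)$ sum --- but this does not affect the lemma at hand.)
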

\begin{proof}
If $S \in \textup{ch}(R^{(\ell)})$, $S \ne R^{(\ell-1)}$, and $(x,t) \in W_R$, we have by the size estimate \eqref{eq:size} that
\begin{align*}
|\theta_t(1_S \Delta_{R^{(\ell)}}f)(x)| &\lesssim \int_S \frac{\ell(R)^{\alpha}}{d(S,R)^{m+\alpha}} |\Delta_{R^{(\ell)}} f(y)|\,d\mu(y) \\
&\lesssim \int_S \Big(\frac{\ell(R)}{\ell(S)}\Big)^{\alpha/2}\frac{1}{\ell(S)^m} |\Delta_{R^{(\ell)}} f(y)|\,d\mu(y)
\end{align*}
Here we used that by goodness $d(R, S) \ge \ell(R)^{\gamma}\ell(S)^{1-\gamma}$, and that we have $\gamma \leq \alpha/(2m+2\alpha)$.
\end{proof}
We now see using this lemma that $I_{\textup{nested}, 1}$ can dominated by
\begin{align*}
& \sum_{\ell\geq r+1} 2^{-\alpha \ell/2} \Big\| 1_{Q_0} \Big( \sum_{k\le s-\ell} \Big(\sum_{R \in \mathcal{D}_{k,\textup{good}}:\, R \subset Q^*}
1_{R}  2^{-(k+\ell)m} \int |\Delta_{R^{(\ell)}} f(y)| \,d\mu(y)\Big)^2\Big)^{1/2} \Big\|_{L^q(\mu)} \\
&\lesssim \sum_{\ell\geq r+1} 2^{-\alpha \ell/2} \Big\| 1_{Q_0} \Big( \sum_{k\le s-\ell} \Big(\sum_{S \in \mathcal{D}_{k+\ell}:\, S \subset Q^*} \frac{1_S}{\mu(S)}
 \int |\Delta_{S} f(y)| \,d\mu(y)\Big)^2\Big)^{1/2} \Big\|_{L^q(\mu)} \\
 &= \sum_{\ell\geq r+1} 2^{-\alpha \ell/2} \Big\| 1_{Q_0} \Big( \sum_{k\le s-\ell} \Big(\sum_{S \in \mathcal{D}_{k+\ell}} \frac{1_S}{\mu(S)}
 \int_S |\Delta_{k+\ell} f(y)| \,d\mu(y)\Big)^2\Big)^{1/2} \Big\|_{L^q(\mu)} \\
 &\lesssim  \Big\|  \Big( \sum_{k \le s} [ E_{k} (|\Delta_{k}f|)]^2\Big)^{1/2} \Big\|_{L^q(\mu)} \lesssim \mu(Q^*)^{1/q}.
\end{align*}
The last inequality follows from Stein's inequality \eqref{Stein} and \eqref{eq:MD-square.function.estimate}.

\subsection{The sum $I_{\textup{nested}, 2}$}
We begin by recording the following bound:
\begin{lem}\label{lem:kk}
For $\ell \ge r+1$ and $R \in \mathcal{D}_{\textup{good}}$ we have for $(x,t) \in W_R$ that there holds that
\begin{displaymath}
|\theta_t(1_{(R^{(\ell-1)})^c}b_{(R^{(\ell)})^a} )(x)| \lesssim 2^{-\alpha \ell/2}.
\end{displaymath}
\end{lem}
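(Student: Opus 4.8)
The plan is to exploit the goodness of $R$ together with the size estimate \eqref{eq:size} and the fact that $b_{(R^{(\ell)})^a}$ is an $L^q(\mu)$-normalized test function supported in $(R^{(\ell)})^a$. First I would fix $(x,t) \in W_R$, so $x \in R$ and $\ell(R)/2 < t \le \ell(R)$, and write $P = (R^{(\ell)})^a$, noting $\supp b_P \subset P$. The integration is then over $P \cap (R^{(\ell-1)})^c$, so every point $y$ in the domain of integration lies outside $R^{(\ell-1)}$. The first key step is the geometric observation that, by goodness of $R$ and $\ell \geq r+1$, the distance from $x \in R$ to the complement of $R^{(\ell-1)}$ is at least $\ell(R)^\gamma \ell(R^{(\ell-1)})^{1-\gamma} = \ell(R)^\gamma (2^{\ell-1}\ell(R))^{1-\gamma}$; hence for $y \notin R^{(\ell-1)}$ we have $|x-y| + t \gtrsim |x-y| \gtrsim 2^{(\ell-1)(1-\gamma)}\ell(R)$, so $(t+|x-y|)^{-(m+\alpha)} \lesssim 2^{-(\ell-1)(1-\gamma)(m+\alpha)}\ell(R)^{-(m+\alpha)}$.

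The second step is to insert this into \eqref{eq:size} and integrate. Since $t^\alpha \lesssim \ell(R)^\alpha$, we get
\begin{displaymath}
|\theta_t(1_{(R^{(\ell-1)})^c} b_P)(x)| \lesssim \ell(R)^{\alpha} 2^{-(\ell-1)(1-\gamma)(m+\alpha)} \ell(R)^{-(m+\alpha)} \int_{P} |b_P|\,d\mu.
\end{displaymath}
By H\"older's inequality and testing condition (3), $\int_P |b_P|\,d\mu \le \mu(P)^{1/q'}\|b_P\|_{L^q(\mu)} \lesssim \mu(P)^{1/q'}\mu(P)^{1/q} = \mu(P)$, and $\mu(P) \lesssim \ell(P)^m = (2^\ell \ell(R))^m$. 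Collecting powers of $2^\ell$ gives a total exponent $-(\ell-1)(1-\gamma)(m+\alpha) + \ell m$, and a total power of $\ell(R)$ equal to $\alpha - (m+\alpha) + m = 0$, so the $\ell(R)$-dependence cancels as it must. Choosing the goodness parameter $\gamma$ small enough that $(1-\gamma)(m+\alpha) \ge m + \alpha/2$ — which is implied by the standing requirement $m\gamma/(1-\gamma) \le \alpha/4$, since that forces $\gamma(m+\alpha) \le \alpha/4 + \gamma\alpha \le \alpha/2$ for $\gamma$ small — makes the exponent of $2^\ell$ at most $-\alpha\ell/2$ (up to the harmless shift from $\ell-1$ to $\ell$, absorbed into the implicit constant). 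This yields $|\theta_t(1_{(R^{(\ell-1)})^c}b_{(R^{(\ell)})^a})(x)| \lesssim 2^{-\alpha\ell/2}$.

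The main obstacle is really just bookkeeping the exponents of $\gamma$ correctly so that the gain $2^{-\alpha\ell/2}$ genuinely survives: one must check that the loss $2^{\ell m}$ coming from $\mu((R^{(\ell)})^a) \lesssim (2^\ell\ell(R))^m$ is strictly beaten by the decay $2^{-(\ell-1)(1-\gamma)(m+\alpha)}$ from the kernel, which is exactly where the smallness of $\gamma$ (via $m\gamma/(1-\gamma)\le\alpha/4$) is used. No cancellation or vanishing-integral property of $b_P$ is needed here — unlike in $I_{\textup{nested},1}$, here we only use the size bound and the $L^q$ normalization — so the argument is a clean one-line kernel estimate once the goodness-based lower bound on $|x-y|$ is in hand.
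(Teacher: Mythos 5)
There is a genuine gap: you have implicitly identified the support of the test function with $R^{(\ell)}$, but the lemma concerns $b_{(R^{(\ell)})^a}$, and the stopping ancestor $P=(R^{(\ell)})^a=R^{(\ell+N_0)}$ can be \emph{much} larger than $R^{(\ell)}$ (its sidelength is $2^{\ell+N_0}\ell(R)$ with $N_0$ as large as $s-\operatorname{gen}(R)-\ell$, not comparable to $2^{\ell}\ell(R)$). Consequently your step $\mu(P)\lesssim \ell(P)^m=(2^{\ell}\ell(R))^m$ is false in general; the correct bound is $(2^{\ell+N_0}\ell(R))^m$, and then the single worst-case kernel estimate coming from $d(R,(R^{(\ell-1)})^c)\gtrsim 2^{(\ell-1)(1-\gamma)}\ell(R)$ produces the exponent $-(\ell-1)(1-\gamma)(m+\alpha)+(\ell+N_0)m$, which is not $\le -\alpha\ell/2$ uniformly in $N_0$. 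In other words, bounding the kernel by its value at the nearest point of $(R^{(\ell-1)})^c$ and then integrating $|b_P|$ over all of $P$ throws away exactly the extra decay needed to beat the mass of $b_P$ living far from $R$.

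The paper's proof repairs this with a shell decomposition that you are missing: write $(R^{(\ell-1)})^c\cap P=\bigcup_{j=0}^{N_0}\bigl(R^{(\ell+j)}\setminus R^{(\ell+j-1)}\bigr)$, and on the $j$-th shell use goodness of $R$ at the scale of $R^{(\ell+j-1)}$ (not just at scale $R^{(\ell-1)}$), which gives $d\bigl(R,\partial R^{(\ell+j-1)}\bigr)^{m+\alpha}\gtrsim 2^{\alpha\ell/2}2^{\alpha j/2}\ell(R)^{\alpha}\mu(R^{(\ell+j)})$ after invoking $\mu(B(x,r))\lesssim r^m$ and $\gamma(m+\alpha)\le\alpha/2$. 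One then needs $\int_{R^{(\ell+j)}}|b_{P}|\,d\mu\lesssim\mu(R^{(\ell+j)})$ for each intermediate cube; this is not just testing condition (3) applied to $P$ (which only controls the average over $P$ itself), but uses H\"older together with the stopping condition $\langle|b_{P}|^q\rangle_{R^{(\ell+j)}}\le 2^{q'+1}A^{q'}$, valid because the cubes $R^{(\ell+j)}$, $j<N_0$, lie strictly below the stopping ancestor $P$ (for $j=N_0$ one uses (3)). The measure factors then cancel shell by shell, and summing the geometric series $\sum_{j\ge0}2^{-\alpha j/2}$ yields the claimed bound $2^{-\alpha\ell/2}$. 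So the missing ideas are precisely the scale-by-scale use of goodness and the stopping-time control of the local $L^q$ averages of $b_{(R^{(\ell)})^a}$; your observation that no cancellation of $b_P$ is needed is correct, but the argument cannot be reduced to a single kernel estimate at the innermost scale.
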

\begin{proof}
Choose $N_0$ so that $(R^{(\ell)})^a=R^{(\ell+N_0)}$. Notice that since $R$ is good there holds that
\begin{align*}
d\big(R, \partial R^{(\ell+j-1)}\big)^{m+\alpha} &\gtrsim 2^{\alpha \ell/2}2^{\alpha j/2} \ell(R)^{\alpha} \mu(R^{(\ell+j)}).
\end{align*}
Here we used that $\gamma(\alpha+m)\leq \alpha/2$. 

Therefore, for $(x,t) \in W_R$, the above estimate, the size bound \eqref{eq:size} and the stopping conditions show that
\begin{align*}
|\theta_t(1_{(R^{(\ell-1)})^c} b_{(R^{(\ell)})^a }(x)| 
&\lesssim \sum_{j=0}^{N_0}  \int_{R^{(\ell+j)} \setminus (R^{(\ell+j-1)} } \frac{\ell(R)^\alpha}{|x-y|^{m+\alpha}} |b_{(R^{(\ell)})^a}(y)|\, d\mu(y)\\
&\lesssim  \sum_{j=0}^{N_0}  \frac{\ell(R)^{\alpha} \mu(R^{(\ell+j)})}{2^{\alpha \ell/2}2^{\alpha j/2} \ell(R)^{\alpha} \mu(R^{(\ell+j)})} \\ 
& \lesssim 2^{-\alpha \ell /2}.
\end{align*}
\end{proof}
We now have to do a case study.
\subsection{The case $(R^{(\ell-1)})^a = (R^{(\ell)})^a$}
In this case we may write
\begin{align}\label{eq:split1}
1_{R^{(\ell-1)}} \Delta_{R^{(\ell)}} f = - 1_{(R^{(\ell-1)})^c} B_{R^{(\ell-1)}} b_{(R^{(\ell)})^a} + B_{R^{(\ell-1)}}b_{(R^{(\ell)})^a},
\end{align}
where
\begin{displaymath}
B_{R^{(\ell -1)}} = \frac{\langle f \rangle_{R^{(\ell -1)}}}{\langle b_{(R^{(\ell -1)})^a}\rangle_{R^{(\ell -1)}}} - \frac{\langle f \rangle_{R^{(\ell)}} }{\langle b_{(R^{(\ell)})^a}\rangle_{R^{(\ell)} }}
\end{displaymath}
with the minus term missing if $\ell(R^{(\ell)}) = 2^s$.

Accretivity condition gives that
\begin{align*}
|B_{R^{(\ell-1)}}| \mu(R^{(\ell-1)}) \lesssim \Big| \int_{R^{(\ell-1)}} B_{R^{(\ell-1)}} b_{(R^{(\ell)})^a}\,d\mu\Big| &= \Big| \int_{R^{(\ell-1)}} \Delta_{R^{(\ell)}} f\,d\mu\Big|.
\end{align*}
Combining with Lemma \ref{lem:kk} we see that for $(x,t) \in W_R$ there holds that
\begin{displaymath}
|\theta_t(1_{(R^{(\ell-1)})^c} B_{R^{(\ell-1)}} b_{(R^{(\ell)})^a})(x)| \lesssim 2^{-\alpha \ell/2}   \frac{1}{\mu(R^{(\ell-1)})} \int_{R^{(\ell-1)}} |\Delta_{R^{(\ell)}} f|\,d\mu.
\end{displaymath}
So to control the sum with the first term of \eqref{eq:split1} it is enough to note that for a fixed $\ell \ge r+1$ there holds that
\begin{align*}
&\Big\| 1_{Q_0} \Big( \sum_{k \le s-\ell} \Big( \sum_{R \in \D_k:\, R \subset Q^*}  \frac{1_R}{\mu(R^{(\ell-1)})} \int_{R^{(\ell-1)}} |\Delta_{R^{(\ell)}} f(y)|\,d\mu(y) \Big)^2\Big)^{1/2} \Big\|_{L^q(\mu)} \\
& \le \Big\| 1_{Q_0} \Big( \sum_{k \le s-\ell} \Big( \sum_{S \in \D_{k+\ell-1}}  \frac{1_S}{\mu(S)} \int_{S} |\Delta_{k+\ell} f(y)|\,d\mu(y) \Big)^2\Big)^{1/2} \Big\|_{L^q(\mu)} \\
&= \Big\|  \Big( \sum_{k \le s} [ E_{k-1} (|\Delta_{k}f|)]^2\Big)^{1/2} \Big\|_{L^q(\mu)} \lesssim \mu(Q^*)^{1/q}.
\end{align*}
In the last step we again used Stein's inequality \eqref{Stein} and \eqref{eq:MD-square.function.estimate}. We will not touch the second term of \eqref{eq:split1} yet -- it will become
part of the paraproduct.

\subsection{The case $(R^{(\ell-1)})^a = R^{(\ell-1)}$} 
We decompose
\begin{align*}
 1_{R^{(\ell-1)}} \Delta_{R^{(\ell)}}f = \Big(\frac{\langle f \rangle_{R^{(\ell-1)}}}{\langle b_{R^{(\ell-1)}}\rangle_{R^{(\ell-1)}}}&b_{R^{(\ell-1)}} - \frac{\langle f \rangle_{R^{(\ell)}} }{\langle b_{(R^{(\ell)})^a}\rangle_{R^{(\ell)} }}b_{(R^{(\ell)})^a} \Big) \\
 &+ 1_{(R^{(\ell-1)})^c} \frac{\langle f \rangle_{R^{(\ell)}} }{\langle b_{(R^{(\ell)})^a}\rangle_{R^{(\ell)} }}b_{(R^{(\ell)})^a}.
\end{align*}
The term in the parenthesis will become part of the paraproduct, and we do not touch it further in this subsection.

For the second term, using the construction of the stopping time and Lemma \eqref{lem:kk}, we have for $(x,t) \in W_R$ that
\begin{displaymath}
\Big|\theta_t \Big( 1_{(R^{(\ell-1)})^c} \frac{\langle f \rangle_{R^{(\ell)}} }{\langle b_{(R^{(\ell)})^a}\rangle_{R^{(\ell)} }}b_{(R^{(\ell)})^a} \Big)(x)\Big| \lesssim 2^{-\alpha \ell/2} |\langle f \rangle_{R^{(\ell)}}|.
\end{displaymath}
We say that $R \in \mathcal{S}_\ell$, if $(R^{(\ell-1)})^a = R^{(\ell-1)}$. To control the corresponding sum we note that
\begin{align*}
 \Big\| 1_{Q_0} &\Big( \mathop{\sum_{R \in \mathcal{D}_{\textup{good}}:\, R \subset Q^*}}_{2^{-\kappa} <\ell(R)<2^{s-r}}1_{R}\int_{\ell(R)/2}^{\ell(R)} \Big| \mathop{\sum_{\ell=r+1}^{s-\textup{gen(R)}}}_{R \in \mathcal{S}_\ell}
 \frac{\langle f \rangle_{R^{(\ell)}} }{\langle b_{(R^{(\ell)})^a}\rangle_{R^{(\ell)} }} \theta_t( 1_{(R^{(\ell-1)})^c} b_{(R^{(\ell)})^a}) \Big|^2\frac{dt}{t}\Big)^{1/2} \Big\|_{L^q(\mu)}\\
&\lesssim \Big\| 1_{Q_0} \Big( \mathop{\sum_{R \in \mathcal{D}_{\textup{good}}:\, R \subset Q^*}}_{2^{-\kappa} <\ell(R)<2^{s-r}}1_{R}  \mathop{\sum_{\ell=r+1}^{s-\textup{gen(R)}}}_{R \in \mathcal{S}_\ell}2^{-\alpha \ell/2}  |\langle f \rangle_{R^{(\ell)}}|^2 \Big)^{1/2} \Big\|_{L^q(\mu)}\\
&\leq \Big\| 1_{Q_0} \Big( \sum_{\ell \ge r+1} 2^{-\alpha \ell /2} \sum_{S \in \D:\, S \subset Q^*}  |\langle f \rangle_{S}|^2 A_S \Big)^{1/2}
\Big\|_{L^q(\mu)} \\
&\lesssim \Big\| 1_{Q_0} \Big( \sum_{S \in \D:\, S \subset Q^*}  |\langle f \rangle_{S}|^2 A_S \Big)^{1/2}\Big\|_{L^q(\mu)} \\
&  \lesssim \mu(Q^*)^{1/q},
\end{align*}
where we denoted
\begin{displaymath}
A_S(x) := \mathop{\sum_{S' \in \textup{ch}(S)}}_{(S')^a = S'} 1_{S'}(x).
\end{displaymath}
For the final estimate one can use the fact that $|f| \le 1$ to throw away the averages, and then use H\"older with exponent $p:= 2/q > 1$ together with Lemma \ref{lem:mescar}:
\begin{align*}
\Big\|& \Big( \sum_{S \in \D:\, S \subset Q^*}  A_S \Big)^{1/2}\Big\|_{L^q(\mu)} \\
& \le \Big[ \mu(Q^*)^{1-1/p} \Big( \sum_{F \in \mathcal{F}_{Q^*}} \mu(F) \Big)^{1/p} \Big]^{1/q} \\
&\lesssim ( \mu(Q^*)^{1-1/p} \mu(Q^*)^{1/p} )^{1/q}\\
& = \mu(Q^*)^{1/q}.
\end{align*}
%Alternatively, and less crudely (to get a bound $\|f\|_{L^q(\mu)}$), one can invoke Carleson's embedding theorem \ref{CarlesonEmbeddingThm} (the control for $A_S$ follows from Lemma \ref{lem:mescar}).
\subsection{The Carleson estimate for the paraproduct}
Combining the above two cases and collapsing the remaining telescoping summation we are left with: 
\begin{align*}
\Big\| 1_{Q_0} \Big(& \mathop{\sum_{R \in \mathcal{D}_{\textup{good}}:\, R \subset Q^*}}_{2^{-\kappa} <\ell(R)<2^{s-r}}1_{R} \int_{\ell(R)/2}^{\ell(R)} \Big|  \frac{\langle f \rangle_{R^{(r)}}}{\langle b_{(R^{(r)})^a} \rangle_{R^{(r)}}} \theta_t b_{(R^{(r)})^a}\Big|^2\,\frac{dt}{t}\Big)^{1/2} \Big\|_{L^q(\mu)} \\
&\lesssim \Big\| 1_{Q_0} \Big( \mathop{\sum_{S\in \D}}_{S\subset Q^*} \mathop{\sum_{R \in \D }}_{R^{(r)}=S} 1_{R} \int_{\ell(R)/2}^{\ell(R)} \big| \theta_t b_{S^a}\big|^2\,\frac{dt}{t}\Big)^{1/2} \Big\|_{L^q(\mu)}\\
& = \Big\| 1_{Q_0} \Big( \sum_{F \in \mathcal{F}_{Q^*}} \sum_{S: S^\alpha=F} \mathop{\sum_{R \in \D }}_{R^{(r)}=S} 1_{R} \int_{\ell(R)/2}^{\ell(R)} \big| \theta_t b_{F}\big|^2\,\frac{dt}{t}\Big)^{1/2} \Big\|_{L^q(\mu)}\\
&  \leq \Big\| 1_{Q_0} \Big( \sum_{F \in \mathcal{F}_{Q^*}} \sum_{R: R \subset F}  1_{R} \int_{\ell(R)/2}^{\ell(R)} \big| \theta_t b_{F}\big|^2\,\frac{dt}{t}\Big)^{1/2} \Big\|_{L^q(\mu)}\\
&  \leq \Big\| 1_{Q_0} \Big( \sum_{F \in \mathcal{F}_{Q^*}} 1_{F} \int_{0}^{\ell(F)} \big| \theta_t b_{F}\big|^2\,\frac{dt}{t} \Big)^{1/2} \Big\|_{L^q(\mu)}\\
&  \leq \sum_{j\geq 0}  \Big( \sum_{F \in \mathcal{F}^j_{Q^*}} \Big\|  \Big(1_{F} \int_{0}^{\ell(F)} \big| \theta_t b_{F}\big|^2\,\frac{dt}{t}\Big)^{1/2} \Big\|^q_{L^q(\mu)} \Big)^{1/q}\\
& \le  V_{\textup{loc}, q} \sum_{j\geq 0} \Big(  \sum_{F \in \mathcal{F}^j_{Q^*}} \mu(F)\Big)^{1/q} \lesssim V_{\textup{loc}, q} \mu(Q^*)^{1/q}.
\end{align*}
In the first inequality we used the stopping time conditions and the fact that $|f|\leq 1$, while the penultimate inequality follows from assumption $(4)$ of theorem \ref{thm:main}.

\newpage
\appendix
\section{$T1$ theorem in $L^q(\mu)$} \label{App:T1}
Let us recall the definition of our Carleson constant:
\begin{displaymath}
\mcar_V(q, \lambda) :=  \mathop{\sup_{Q_0 \subset \R^n}}_{\textup{cube}} \Big[ \frac{1}{\mu(\lambda Q_0)} \int_{Q_0} \Big( \int_0^{\ell(Q_0)} |\theta_t 1(x)|^2 \frac{dt}{t} \Big)^{q/2} \,d\mu(x)\Big]^{1/q}.
\end{displaymath}
Recall also that $q \in (1,2]$.
We are interested in proving the following $T1$ theorem.
\begin{thm}\label{appendix:T1thm}
We have the quantitative bound
\begin{equation}\label{eq:qt1}
\|V\|_{L^q(\mu) \to L^q(\mu)} \lesssim 1 + \mcar_V(q, 9).
\end{equation}
\end{thm}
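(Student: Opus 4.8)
\medskip

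The plan is to transcribe the proof of the key inequality (Sections \ref{section:Q<R}--\ref{section:nested}) to the situation where the twisted martingale differences $\Delta_Q$ are replaced by the ordinary dyadic ones $\Delta^c_Q$ and the paraproduct is tested against the constant function $1$; this is the $L^q(\mu)$ incarnation of the $q=2$ argument of \cite{MM}, with Stein's inequality (Lemma \ref{lemma-Stein}) supplying all the required $L^q$ square-function estimates. First I would carry out the a priori reduction exactly as in Section \ref{reduction.apriori.bounded}: truncating $V$ to the operators $V_i$, which are bounded on $L^q(\mu)$ since $|\theta_t f|\lesssim M_\mu f$, and noting that truncation does not increase the Carleson constant (so $\mcar_{V_i}(q,9)\lesssim\mcar_V(q,9)$ uniformly in $i$), one may assume $\|V\|_{L^q(\mu)\to L^q(\mu)}<\infty$, to be used purely qualitatively.

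Next, since for every $w$ the Whitney regions $W_R$, $R\in\D(w)$, tile $\R^{n+1}_+$, one has $Vf(x)^2=\sum_{R\in\D(w)}1_R(x)\int_{\ell(R)/2}^{\ell(R)}|\theta_t f(x)|^2\,\frac{dt}{t}$; averaging $\|Vf\|^q_{L^q(\mu)}$ over $w$ and splitting each inner sum into good and bad cubes, the bad part is --- via $Eg^{q/2}\le(Eg)^{q/2}$ for $q/2\le1$, the independence of $w\mapsto 1_{\textup{bad}}(R_0+w)$ from $w\mapsto 1_{R_0+w}(x)$, and $E_w1_{\textup{bad}}(R_0+w)\le c(r)\to0$ --- dominated by $c(r)^{q/2}\|Vf\|^q_{L^q(\mu)}$, which (with $\|Vf\|_{L^q(\mu)}<\infty$ at our disposal) gets absorbed once $r$ is fixed large enough. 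It then suffices to bound the good contribution, uniformly in $w$, by $C(1+\mcar_V(q,9))^q\|f\|^q_{L^q(\mu)}$. Fixing $w$ and writing $\D=\D(w)$, one expands $f=\sum_{Q\in\D}\Delta^c_Q f$ in ordinary dyadic martingale differences (with the truncation device of Section \ref{reduction.begin} keeping the sums finite), records the elementary estimate $\|(\sum_k|\Delta^c_k f|^2)^{1/2}\|_{L^q(\mu)}\approx\|f\|_{L^q(\mu)}$ (standard martingale theory, valid for any measure, here replacing Proposition \ref{MD-square.function.estimate}), and splits the double sum over $(Q,R)$ into the same four ranges --- $\ell(Q)<\ell(R)$, separated, diagonal, nested --- used for the key inequality.

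For the first three ranges the estimates of Sections \ref{section:Q<R}--\ref{section:diagonal} go through verbatim with $\Delta_Q$ replaced by $\Delta^c_Q$: using $\int\Delta^c_Q f\,d\mu=0$, the kernel bounds of Lemmas \ref{lemma.kernel.est.Q<R}--\ref{lemma.kernel.est.separ}, Lemma \ref{lemma.R<S(Q)} and disjointness, one reduces --- after summing over the scale parameters against a geometric factor --- to quantities of the form $E_{\tau}(|\Delta^c_k f|)$ controlled by Stein's inequality and the square-function estimate above, and each range contributes $\lesssim\|f\|_{L^q(\mu)}$. In the nested range goodness forces $R\subset Q$, so $Q=R^{(\ell)}$ with $\ell>r$; decomposing $1_{R^{(\ell)}}\Delta^c_{R^{(\ell)}}f=1_{R^{(\ell)}\setminus R^{(\ell-1)}}\Delta^c_{R^{(\ell)}}f+(1-1_{(R^{(\ell-1)})^c})\Delta^c_{R^{(\ell)}}f$, the terms carrying a factor $1_{(\cdot)^c}$ acquire a factor $2^{-\alpha\ell/2}$ from \eqref{eq:size} and goodness and are handled as in the treatment of $I_{\textup{nested},1}$, while the remaining telescoping piece collapses to the paraproduct $\langle f\rangle_{R^{(r)}}\theta_t1$ on $W_R$.

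The paraproduct is the heart of the matter. Grouping the good cubes $R$ by their ancestor $S=R^{(r)}$, it equals $\|(\sum_{S\in\D}|\langle f\rangle_S|^2 A_S^2)^{1/2}\|_{L^q(\mu)}$ with $A_S(x)^2=\sum_{R^{(r)}=S,\,R\in\D_{\textup{good}}}1_R(x)\int_{\ell(R)/2}^{\ell(R)}|\theta_t1(x)|^2\,\frac{dt}{t}$, which is supported in $S$ and, because the Whitney regions of the cubes inside $R_0$ tile $\widehat{R_0}$, satisfies $\sum_{S\subset R_0}A_S(x)^2\le\int_0^{\ell(R_0)}|\theta_t1(x)|^2\,\frac{dt}{t}$ for $x\in R_0$; hence the $q$-Carleson constant of $(A_S)$, measured with the $\mu(9R_0)$ normalization, is $\le\mcar_V(q,9)$, and a variant of Proposition \ref{CarlesonEmbeddingThm} gives the bound $\lesssim(1+\mcar_V(q,9))\|f\|_{L^q(\mu)}$. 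Putting the four ranges together one obtains $\|Vf\|^q_{L^q(\mu)}\le C(1+\mcar_V(q,9))^q\|f\|^q_{L^q(\mu)}+\tfrac12\|Vf\|^q_{L^q(\mu)}$, and since $\|Vf\|_{L^q(\mu)}<\infty$ the theorem follows. The main obstacle is precisely this last Carleson step: since $\theta_t1$ is not supported in any cube, the testing must be performed on a fixed dilate $9Q$, so one needs the version of Proposition \ref{CarlesonEmbeddingThm} with the Carleson constant normalized by $\mu(9R)$ rather than $\mu(R)$. Establishing it requires replacing the dyadic maximal operator in the proof of that proposition by the dilated operator $f\mapsto\sup_{Q\ni x}\mu(9Q)^{-1}\int_Q|f|\,d\mu$, which remains of weak type $(1,1)$ and bounded on $L^p(\mu)$ for $p>1$ for an arbitrary measure; tracking the dilation factors consistently through this argument, and through the passage between dyadic and arbitrary cubes in the chain \eqref{eq:Intro.qt1}, is exactly where the constants $9$ in $\mcar_V$ and $3$ in $\car_V$ originate, and is the only genuinely non-homogeneous difficulty --- everything else being a mechanical transcription of the $q=2$ scheme.
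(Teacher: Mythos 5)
Your outline of the reductions (a priori boundedness, the good/bad decomposition, classical martingale differences, the four-way splitting handled by Stein's inequality and the elementary $L^q$ martingale square function bound) coincides with the appendix's argument, and your identification of the paraproduct as the only genuinely new step is also correct. The gap is in how you treat that step. You verify, correctly, that $(A_S)_S$ satisfies the Carleson condition normalized by $\mu(9R_0)$, and then invoke a ``variant of Proposition \ref{CarlesonEmbeddingThm}'' in which \eqref{eq:qCarlesonCondition} is normalized by $\mu(9R)$ instead of $\mu(R)$. No such variant holds for general non-doubling $\mu$ as long as the conclusion involves the plain averages $\langle f\rangle_Q$: with $q<2$, put unit mass inside a dyadic cube $R_0$ and a large mass $M$ in $9R_0\setminus R_0$, and take the single function $A_{R_0}=M^{1/2}1_{R_0}$; then $\mu(9R)^{-1}\int_R\big(\sum_{Q\subset R}|A_Q|^2\big)^{q/2}d\mu\lesssim M^{q/2-1}\le 1$ for every $R$, yet for $f=1_{R_0}$ the left-hand side of \eqref{eq:Carlesonembedding} is $\approx M^{q/2}$ while $\|f\|_{L^q(\mu)}^q\approx 1$. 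Your proposed proof breaks at exactly this point: after stopping at the maximal cubes $R^i_j$ with $|\langle f\rangle_{R^i_j}|>2^j$ (plain averages), one is left with $\sum_i\mu(9R^i_j)$, and neither the disjointness of the $R^i_j$ nor the $L^p(\mu)$-boundedness of $\sup_{Q\ni x}\mu(9Q)^{-1}\int_Q|f|\,d\mu$ controls this sum, because the stopping was not performed with the dilated normalization.

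The paper avoids any dilated embedding: it upgrades the Carleson condition for $(A_S)$ to the standard $\mu(Q)$-normalized one and then applies Proposition \ref{CarlesonEmbeddingThm} unchanged. The mechanism is goodness: every $R\in\D_{\textup{good}}$ contributing to some $A_S$ with $S\subset Q$ has $\ell(R)\le 2^{-r}\ell(Q)$, hence $d(R,Q^c)\ge\ell(R)^{\gamma}\ell(Q)^{1-\gamma}\ge 2^{r(1-\gamma)}\ell(R)\ge 9\ell(R)$. Grouping these $R$ under the maximal cubes $\mathcal{R}(Q)$ with $d(R,Q^c)\ge 9\ell(R)$, applying the testing quantity $\mcar_V(q,9)$ on each $R\in\mathcal{R}(Q)$ (this is where the $\mu(9R)$ normalization is paid for), and using the bounded overlap $\sum_{R\in\mathcal{R}(Q)}1_{9R}\lesssim 1_Q$, one gets $\int_Q\big(\sum_{S\subset Q}A_S^2\big)^{q/2}d\mu\le\mcar_V(q,9)^q\sum_{R\in\mathcal{R}(Q)}\mu(9R)\lesssim\mcar_V(q,9)^q\mu(Q)$. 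In other words, the dilation is absorbed by the goodness of the cubes $R$ and the overlap of the enlarged cubes inside $Q$, not by a dilated maximal operator; this is the one non-mechanical ingredient missing from your proposal, the remainder of which follows the paper's proof.
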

We now indicate the proof of this theorem. We can again, without loss of generality, assume that $\|V\|_{L^q(\mu) \to L^q(\mu)} \ < \infty$.

\subsection{Reduction to a dyadic setting of good geometric data}
Since we are not so well localised yet this part of the argument has a few more steps than that of the main theorem.
We write
\begin{align*}
 \int_{0}^{\infty} |\theta_{t}f(x)|^{2} \, \frac{dt}{t} &= \sum_{R \in \mathcal{D}(w)} 1_R(x) \int_{\ell(R)/2}^{\ell(R)} |\theta_{t}f(x)|^{2} \, \frac{dt}{t} \\
 &= \lim_{s \to \infty} \mathop{\sum_{R \in \mathcal{D}(w)}}_{\ell(R) \le 2^s} 1_R(x) \int_{\ell(R)/2}^{\ell(R)} |\theta_{t}f(x)|^{2} \, \frac{dt}{t}.
\end{align*}

By monotone convergence we have that
\begin{displaymath}
\|Vf\|_{L^q(\mu)} = \lim_{s \to \infty} \Big[ \int_{\R^n} \Big(  \mathop{\sum_{R \in \mathcal{D}(w)}}_{\ell(R) \le 2^s} 1_R(x) \int_{\ell(R)/2}^{\ell(R)} |\theta_{t}f(x)|^{2} \, \frac{dt}{t} \Big)^{q/2}\,d\mu(x)\Big]^{1/q}.
\end{displaymath}
We take the expectation $E_w$ of this identity. Notice that there holds that
\begin{displaymath}
 \Big[ \int_{\R^n} \Big(  \mathop{\sum_{R \in \mathcal{D}(w)}}_{\ell(R) \le 2^s} 1_R(x) \int_{\ell(R)/2}^{\ell(R)} |\theta_{t}f(x)|^{2} \, \frac{dt}{t} \Big)^{q/2}\,d\mu(x)\Big]^{1/q} \le \|Vf\|_{L^q(\mu)} \in L^1((\{0,1\}^n)^\Z).
\end{displaymath}
Indeed, $\|Vf\|_{L^q(\mu)} < \infty$ and $E_w 1 = 1$. Therefore we have by dominated convergence that
\begin{displaymath}
\|Vf\|_{L^q(\mu)} = \lim_{s \to \infty} E_w \Big[ \int_{\R^n} \Big(  \mathop{\sum_{R \in \mathcal{D}(w)}}_{\ell(R) \le 2^s} 1_R(x) \int_{\ell(R)/2}^{\ell(R)} |\theta_{t}f(x)|^{2} \, \frac{dt}{t} \Big)^{q/2}\,d\mu(x)\Big]^{1/q}.
\end{displaymath}

We now write
\begin{displaymath}
\|V\|_{L^q(\mu) \to L^q(\mu)} = \mathop{\sup_{f \textup{ compactly supported}}}_{\|f\|_{L^q(\mu)} \le 1} \|Vf\|_{L^q(\mu)}.
\end{displaymath}
Fix such $f$, and then fix $N$ so that spt$\,f \subset B(0, 2^N)$. It is enough to prove that for every $s \ge N$ there holds that
\begin{align*}
E_w \Big[ \int_{\R^n} \Big(  \mathop{\sum_{R \in \mathcal{D}(w)}}_{\ell(R) \le 2^s} 1_R(x) &\int_{\ell(R)/2}^{\ell(R)} |\theta_{t}f(x)|^{2} \, \frac{dt}{t} \Big)^{q/2}\,d\mu(x)\Big]^{1/q} \\ &\le C(1 + \mcar_V(q, 9)) + \|V\|_{L^q(\mu) \to L^q(\mu)}/2.
\end{align*}
Now also fix $s \ge N$. One may argue as in \ref{reduction.good.R} and reduce to showing that uniformly on $w \in \Omega$ there holds that
\begin{align*}
\Big[ \int_{\R^n} \Big(  \mathop{\sum_{R \in \mathcal{D}(w)_{\textup{good}}}}_{\ell(R) \le 2^s} 1_R(x) \int_{\ell(R)/2}^{\ell(R)} |\theta_{t}f(x)|^{2} \, \frac{dt}{t} \Big)^{q/2}\,d\mu(x)\Big]^{1/q}
 \lesssim (1 + \mcar_V(q, 9)).
\end{align*}
We fix $w$ and write $\D = \D(w)$.

\subsection{Expanding $f$ and splitting the summation}
We now expand the fixed $f$ in $L^q(\mu)$ as follows:
\begin{equation}\label{eq:expand}
f = \lim_{\kappa \to \infty} \mathop{\mathop{\sum_{Q^* \in \mathcal{D}}}_{\ell(Q^*) = 2^s}}_{Q^* \cap B(0, 2^N) \ne \emptyset} \mathop{\mathop{\sum_{Q \in \mathcal{D}}}_{Q \subset Q^*}}_{\ell(Q) > 2^{-\kappa}} \Delta_Q f.
\end{equation}
This time the martingales are simple: $\Delta_Q f = \sum_{Q' \in \textup{ch}(Q)} [\langle f \rangle_{Q'} - \langle f \rangle_{Q}]1_{Q'}$ with the understanding that
$\Delta_{Q^*} f =  \sum_{Q' \in \textup{ch}(Q^*)}  \langle f \rangle_{Q'}1_{Q'}$, $\ell(Q^*) = 2^s$. 
The argument in \ref{reduction.begin} shows that it is enough to be able to bound the quantity
\begin{align}
\Big[ \int_{\R^n} \Big(  \mathop{\sum_{R \in \mathcal{D}_{\textup{good}}}}_{2^{-\kappa} < \ell(R) \le 2^s} 1_R(x) \int_{\ell(R)/2}^{\ell(R)} \Big| \mathop{\mathop{\sum_{Q \in \mathcal{D}}}_{Q \subset Q^*}}_{\ell(Q) > 2^{-\kappa}} \theta_{t} \Delta_Q f(x)\Big|^{2} \, \frac{dt}{t}\Big)^{q/2}\,&d\mu(x)\Big]^{1/q} \label{eq:appendix-HEART_EST}
\end{align}
with $C(1 + \mcar_V(q, 9))$ for every fixed $\kappa$ and for every fixed $Q^*$.

The splitting of the summation is the same as in the proof of the main theorem:
the quantity in \eqref{eq:appendix-HEART_EST} is dominated by $I_{\ell(Q)<\ell(R)} + I_{\textup{sep}} + I_{\textup{diag}} + I_{\textup{nested}}$.
The first three terms are treated using similar arguments to the corresponding ones found in sections \ref{section:Q<R}, \ref{section:separated} and \ref{section:diagonal}, and allows us to obtain
$$I_{\ell(Q)<\ell(R)} + I_{\textup{sep}} + I_{\textup{diag}} \lesssim  1.$$
Indeed, notice that in these sections things boil down to the martingale estimate
\begin{equation}\Big\| \Big(\sum_{Q \subset Q^*} |\Delta_Q f|^2 \Big)^{1/2} \Big\|_{L^q(\mu)} \lesssim \|f\|_{L^q(\mu)} = 1, \label{Appendix:MartinagleBound}
\end{equation}
which is easy for the classical martingales. These sections don't depend on the finer structure of the martingales.

The only difference lies in the treatment of the nested sum. Mostly it is much easier because of the simple martingales. But the thing that is more
complicated is that now only $f \in L^q(\mu)$ (and not bounded). The moral of the story: only the paraproduct requires a different argument.

\subsection{The paraproduct in $T1$}
We need to show that
\begin{displaymath}
\Big\| \Big( \mathop{\sum_{S \in \D}}_{S \subset Q^*} |\langle f \rangle_{S} |^2 A_S^2 \Big)^{1/2} \Big\|_{L^q(\mu)} \lesssim \mcar_V(q, 9),
\end{displaymath}
where
\begin{displaymath}
A_S(x)^2 :=\mathop{\sum_{R \in \mathcal{D}_{\textup{good}}}}_{R^{(r)}=S} 1_{R}(x) \int_{\ell(R)/2}^{\ell(R)}  \big| \theta_t 1(x)\big|^2\, \frac{dt}{t}.
\end{displaymath}
By Proposition \ref{CarlesonEmbeddingThm} it is enough to show the next lemma.
\begin{lem}
There holds that 
\begin{displaymath}
\Car_q ((A_S)_S) \lesssim \mcar_V(q, 9).
\end{displaymath}
\end{lem}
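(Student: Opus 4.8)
The plan is to unwind the definition of $\Car_q$, use the geometric relation $R^{(r)} = S$ between the summation cubes, and reduce the Carleson sum inside a fixed dyadic $P$ to a square-function integral over a controlled dilate of $P$, which is exactly what $\mcar_V(q,9)$ bounds. First I would fix a dyadic cube $P \in \mathcal{D}$ and estimate
\begin{displaymath}
\frac{1}{\mu(P)} \int_P \Big[ \mathop{\sum_{S \in \mathcal{D}}}_{S \subset P} A_S(x)^2 \Big]^{q/2}\,d\mu(x)
= \frac{1}{\mu(P)} \int_P \Big[ \mathop{\sum_{R \in \mathcal{D}_{\textup{good}}}}_{R^{(r)} \subset P} 1_R(x) \int_{\ell(R)/2}^{\ell(R)} |\theta_t 1(x)|^2\,\frac{dt}{t} \Big]^{q/2}\,d\mu(x).
\end{displaymath}
The key point is that $R^{(r)} \subset P$ forces $R \subset P$ and, conversely, every $R \subset P$ with $\ell(R) \le 2^{-r}\ell(P)$ has $R^{(r)} \subset P$; the finitely many larger generations ($R^{(r)} = P$ itself, i.e. $\ell(R) = 2^{-r}\ell(P), \dots$) can be absorbed into a bounded number of extra terms. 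Since the regions $W_R = R \times (\ell(R)/2, \ell(R))$ for $R \in \mathcal{D}_k$ are pairwise disjoint across scales and tile the Carleson box, the inner sum telescopes into a single vertical integral:
\begin{displaymath}
\mathop{\sum_{R \in \mathcal{D}_{\textup{good}}}}_{R \subset P} 1_R(x) \int_{\ell(R)/2}^{\ell(R)} |\theta_t 1(x)|^2\,\frac{dt}{t} \le \int_0^{\ell(P)} |\theta_t 1(x)|^2\,\frac{dt}{t}, \qquad x \in P.
\end{displaymath}

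Dropping the goodness restriction only enlarges the sum, so the displayed quantity is bounded by
\begin{displaymath}
\frac{1}{\mu(P)} \int_P \Big( \int_0^{\ell(P)} |\theta_t 1(x)|^2\,\frac{dt}{t}\Big)^{q/2}\,d\mu(x) \le \frac{\mu(9P)}{\mu(P)} \cdot \mcar_V(q,9)^q.
\end{displaymath}
This is not yet what we want because $\mu(9P)/\mu(P)$ is not bounded for non-doubling $\mu$. The standard fix, which I expect to be the one genuine subtlety, is to \emph{not} take the supremum over all $P$ against $\mu(P)$ but rather to split $P$ into its children or, more efficiently, to note that in the definition of $\mcar_V$ the cube over which we integrate $\theta_t 1$ need not be $P$ itself: one covers $P$ by the cube $\widetilde{P} \in \mathcal{D}_0$ (or a slightly shifted cube) of comparable sidelength with $P \subset \widetilde{P}$ and $\ell(\widetilde P) \approx \ell(P)$, applies $\mcar_V(q,9)$ to $\widetilde{P}$, and uses $3\widetilde{P} \subset 9P$ together with $\mu(P) \lesssim \mu(3\widetilde P)$ — or, cleanest of all, one simply observes that $\Car_q$ as defined in \eqref{eq:qCarlesonCondition} already has $\mu(R)$ in the denominator, so one instead bounds by partitioning the collection $\{S \subset P\}$ according to the value of $S^{(r)} = S_0$ and summing $\int_{S_0}(\int_0^{\ell(S_0)}|\theta_t 1|^2 dt/t)^{q/2}\,d\mu \le \mu(9 S_0)\mcar_V(q,9)^q$ over the disjoint cubes $S_0$ at each fixed relative scale, where now $\sum_{S_0} \mu(9S_0) \lesssim \mu(3P) \lesssim \mu(9P)$ by bounded overlap of the dilates $9 S_0 \subset 3P$.

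The main obstacle is therefore purely the non-homogeneity bookkeeping in the last paragraph: making sure the dilated cubes $9S_0$ that appear when invoking $\mcar_V(q,9)$ have bounded overlap and stay inside a fixed dilate of $P$, so that their $\mu$-measures sum to $\lesssim \mu(\lambda P)$ for some fixed $\lambda$, rather than blowing up. The choice of the dilation constant $9$ (versus $3$) in $\mcar_V$ is precisely engineered to leave room for the extra generations $R^{(r)} = S_0 \subsetneq P$ and the dilation of the $S_0$'s. Once that is arranged, combining the pieces over the geometrically decaying (in fact, finitely many relevant) relative scales $\ell(S_0)/\ell(P)$ gives $\Car_q((A_S)_S)^q \lesssim \mcar_V(q,9)^q$, which is the claim.
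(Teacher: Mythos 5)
Your opening reduction is on the right track (for $S\subset P$ and $R^{(r)}=S$ one has $R\subset P$, the Whitney regions collapse into a vertical integral, and $\mcar_V(q,9)$ is applied), and you correctly identify the non-doubling obstruction $\mu(9P)/\mu(P)$. But your proposed fixes do not close this gap, and the step where you write ``dropping the goodness restriction only enlarges the sum'' is exactly where the proof is lost. The point of the lemma is that the Carleson constant $\Car_q$ of Proposition \ref{CarlesonEmbeddingThm} has $\mu(P)$ itself --- not $\mu(3P)$ or $\mu(9P)$ --- in the denominator, and for a non-doubling $\mu$ there is no passage from $\mu(9P)$ back to $\mu(P)$. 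Both of your fixes end with $\mu(3P)$ or $\mu(9P)$ on the right (your own last display says $\sum_{S_0}\mu(9S_0)\lesssim\mu(3P)\lesssim\mu(9P)$), so they verify only a ``dilated'' Carleson condition, which is strictly weaker and is not what Proposition \ref{CarlesonEmbeddingThm} assumes; its proof uses the disjointness of the maximal cubes $R^i_j$ and would break if the denominator were replaced by the measure of a dilate. Likewise, replacing $P$ by a comparable cube $\widetilde P\supset P$ does not help: applying $\mcar_V(q,9)$ to $\widetilde P$ still produces $\mu(9\widetilde P)$, which cannot be dominated by $\mu(P)$.

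The missing idea is that the goodness of the cubes $R$ in the definition of $A_S$ is precisely what keeps all dilates \emph{inside} $P$. Since $R^{(r)}=S\subset P$ forces $\ell(R)\le 2^{-r}\ell(P)$, goodness of $R$ gives $d(R,P^c)\ge \ell(R)^{\gamma}\ell(P)^{1-\gamma}\ge 2^{r(1-\gamma)}\ell(R)\ge 9\ell(R)$ (recall $2^{r(1-\gamma)}\ge 10$). One then passes to the maximal cubes $\mathcal{R}(P)$ with $d(R,P^c)\ge 9\ell(R)$; these are disjoint, Whitney-type cubes, their dilates satisfy $9R\subset P$ with bounded overlap, and applying $\mcar_V(q,9)$ on each of them yields
\begin{displaymath}
\sum_{R\in\mathcal{R}(P)}\int_R\Big[\int_0^{\ell(R)}|\theta_t1|^2\,\frac{dt}{t}\Big]^{q/2}d\mu
\le \mcar_V(q,9)^q\sum_{R\in\mathcal{R}(P)}\mu(9R)\lesssim \mcar_V(q,9)^q\,\mu(P),
\end{displaymath}
which is the bound with the undilated $\mu(P)$ that the Carleson embedding requires. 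So the choice of the dilation $9$ is not there to ``leave room'' for dilating $P$; it is matched to the goodness parameter so that the $9$-dilates of the relevant cubes never leave $P$ at all.
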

\begin{proof}
Let $Q \in \D$, $Q \subset Q^*$. We have that
\begin{align*}
\int_Q \Big[ \mathop{\sum_{S \in \D}}_{S \subset Q} A_S(x)^2 \Big]^{q/2}\,d\mu(x) &= \int_Q \Big[ \mathop{\sum_{S \in \D}}_{S \subset Q} \mathop{\sum_{R \in \mathcal{D}_{\textup{good}}}}_{R^{(r)}=S} 1_{R}(x) \int_{\ell(R)/2}^{\ell(R)}  \big| \theta_t 1(x)\big|^2\, \frac{dt}{t} \Big]^{q/2}\,d\mu(x) \\
&\le \int_Q \Big[ \mathop{\sum_{R \in \D}}_{d(R, Q^c) \ge 9\ell(R)} 1_{R}(x) \int_{\ell(R)/2}^{\ell(R)}  \big| \theta_t 1(x)\big|^2\, \frac{dt}{t} \Big]^{q/2}\,d\mu(x).
\end{align*}
Here we used that each appearing $R \in \D_{\textup{good}}$ satisfies that $R  \subset Q$ and $\ell(R) \le 2^{-r}\ell(Q)$. Therefore, we have that
$d(R, Q^c) \ge \ell(R)^{\gamma}\ell(Q)^{1-\gamma} \ge 2^{r(1-\gamma)} \ell(R) \ge 9\ell(R)$. Let $\mathcal{R}(Q)$ denote the maximal
$R \in \D$ for which $d(R, Q^c) \ge 9\ell(R)$.

We have reduced to bounding
\begin{align*}
 \int_Q & \Big[ \sum_{R \in \mathcal{R}(Q)} \mathop{\sum_{H \in D}}_{H \subset R} 1_{H}(x) \int_{\ell(H)/2}^{\ell(H)}  \big| \theta_t 1(x)\big|^2\, \frac{dt}{t} \Big]^{q/2}\,d\mu(x) \\
 &\le  \int_Q \Big[ \sum_{R \in \mathcal{R}(Q)} 1_R(x) \int_{0}^{\ell(R)}  \big| \theta_t 1(x)\big|^2\, \frac{dt}{t} \Big]^{q/2}\,d\mu(x) \\
 &= \sum_{R \in \mathcal{R}(Q)} \int_R  \Big[ \int_{0}^{\ell(R)}  \big| \theta_t 1(x)\big|^2\, \frac{dt}{t} \Big]^{q/2}\,d\mu(x) \\
 &\le \mcar_V(q, 9)^q \sum_{R \in \mathcal{R}(Q)} \mu(9R) \\
 & \lesssim \mcar_V(q, 9)^q \mu(Q).
\end{align*}
In these estimates we used the disjointness of the cubes in $\mathcal{R}(Q)$ and the bounded overlap property $\sum_{R \in \mathcal{R}(Q)} 1_{9R} \lesssim 1_Q$. We are done.
\end{proof}
This completes our proof of Theorem \ref{appendix:T1thm}.
\begin{bibdiv}
\begin{biblist}

\bib{A}{article}{
title={Lectures on the Kato square root problem},
author={Auscher, Pascal},
conference={
title={Surveys in analysis and operator theory},
address={Canberra},
date={2001}
},
book={
series={Proc. Centre Math. Appl. Austral. Nat. Univ.},
volume={40},
publisher={Austral. Nat. Univ., Canberra},
date={2002},
},
pages={1--18},
,}

\bib{AHLMT}{article}{
   author={Auscher, Pascal},
   author={Hofmann, Steve},
   author={Lacey, Michael},
   author={McIntosh, Alan},
   author={Tchamitchian, Ph.},
   title={The solution of the Kato square root problem for second order
   elliptic operators on ${\Bbb R}\sp n$},
   journal={Ann. of Math. (2)},
   volume={156},
   date={2002},
   number={2},
   pages={633--654},
  }
  
\bib{AHMTT}{article}{
  author={Auscher, Pascal},
  author={Hofmann, Steve}
  author={Muscalu, Camil}
  author={Tao, Terrence}
  author={Thiele, Christoph}
  title={Carleson measures, trees, extrapolation, and $T(b)$ theorems},
  journal={Publ. Mat.},
  volume={46},
  date={2002},
  number={2},
  pages={257--325},
}

\bib{AR}{article}{
  author={Auscher, Pascal},
  author={Routin, Eddy},
  title={Local $Tb$ theorems and Hardy inequalities},
  journal={J. Geom. Anal.},
  volume={23},
  date={2013},
  number={1},
  pages={303--374},
}

\bib{AY}{article}{
  author={Auscher, Pascal},
  author={Yang, Qi Xiang},
  title={$BCR$ algorithm and the $T(b)$ theorem},
  journal={Publ. Mat.},
  volume={53},
  date={2009},
  number={1},
  pages={179--196},
}

\bib{Ch}{article}{
  author={Christ, Michael},
  title={A $T(b)$ theorem with remarks on analytic capacity and the Cauchy integral},
  journal={Colloquium Mathematicum},
  volume={50/51},
  date={1990},
  pages={601--628},
}
\bib{Ho2}{article}{
   author={Hofmann, Steve},
   title={Local $Tb$ theorems and applications in PDE},
   conference={
      title={International Congress of Mathematicians. Vol. II},
   },
   book={
      publisher={Eur. Math. Soc., Z\"urich},
   },
   date={2006},
   pages={1375--1392},
}

\bib{Ho1}{article}{
  author={Hofmann, Steve},
  title={A proof of the local $Tb$ theorem for standard Calder\'{o}n-Zygmund operators},
  eprint={http://arxiv.org/abs/0705.0840},
  date={2007},  
 }

\bib{Ho}{article}{
title={A local $Tb$ theorem for square functions},
author={Hofmann, Steve},
conference={
title={Perspectives in partial differential equations, harmonic analysis and applications},
address={Providence, RI},
date={2008}
},
book={
series={Proc. Sympos. Pure Math.},
volume={79},
publisher={Amer. Math. Soc.},
date={2008},
},
pages={175--185},
,}

\bib{HLM}{article}{
   author={Hofmann, Steve},
   author={Lacey, Michael},
   author={McIntosh, Alan},
   title={The solution of the Kato problem for divergence form elliptic
   operators with Gaussian heat kernel bounds},
   journal={Ann. of Math. (2)},
   volume={156},
   date={2002},
   number={2},
   pages={623--631},
}

\bib{HMc}{article}{
title={The solution of the Kato problem in two dimensions},
 author={Hofmann, Steve},
 author={McIntosh, Alan},
conference={
title={Proceedings of the conference on harmonic analysis and PDE},
address={El Escorial, Spain},
date={2000}
},
book={
series={Publ. Mat.},
volume={extra},
date={2002},
},
pages={143--160},
,}

\bib{Hy}{article}{
  author={Hyt{\"o}nen, Tuomas},
  title={The sharp weighted bound for general Calder\'on-Zygmund operators},
  journal={Ann. of Math. (2)},
  volume={175},
  date={2012},
  number={3},
  pages={1473--1506},
}

\bib{Hy1}{article}{
  author={Hyt{\"o}nen, Tuomas},
  title={The vector-valued non-homogeneous Tb theorem},
  journal={Int. Math. Res. Notices},
  doi={10.1093/imrn/rns222},
  date={2012},
}

\bib{HyM}{article}{
   author={Hyt\"onen, Tuomas},
   author={Martikainen, Henri},
   title={On general local $Tb$ theorems},
   journal={Trans. Amer. Math. Soc.},
   volume={364},
   date={2012},
   number={9},
   pages={4819--4846}, 
}

\bib{HN}{article}{
  author={Hyt{\"o}nen, Tuomas},
  author={Nazarov, Fedor},
  title={The local Tb theorem with rough test functions},
  eprint={http://www.arxiv.org/abs/1206.0907},
  date={2012},
}

\bib{LM1}{article}{
  author={Lacey, Michael},
  author={Martikainen, Henri},
  title={Local Tb theorem with $L^2$ testing conditions and general measures: Calder\'{o}n-Zygmund operators},
  eprint={http://arxiv.org/abs/1310.8531},
  date={2013},
,}
\bib{LM2}{article}{
  author={Lacey, Michael},
  author={Martikainen, Henri},
  title={Local Tb theorem with $L^2$ testing conditions and general measures: Square functions},
  eprint={http://arxiv.org/abs/1308.4571},
  date={2013},
,}

\bib{LV}{article}{
  author={Lacey, Michael},
  author={V\"ah\"akangas, Antti},
  title={Non-homogeneous local T$1$ theorem: Dual exponents},
  eprint={http://arxiv.org/abs/1301.5858},
  date={2013},
,}

 \bib{LV1}{article}{
  author={Lacey, Michael},
  author={V\"ah\"akangas, Antti},
  title={On the Local Tb Theorem: A Direct Proof under the Duality Assumption},
  journal={Proc. Edinb. Math. Soc., to appear},
  eprint={http://arxiv.org/abs/1209.4161},
  date={2013},
,}

\bib{MM}{article}{
  author={Martikainen, Henri},
  author={Mourgoglou, Mihalis},
  title={Square functions with general measures},
  journal={Proc. Amer. Math. Soc., to appear},
  eprint={http://www.arxiv.org/abs/1212.3684},
  date={2012},
,}

\bib{MMO}{article}{
  author={Martikainen, Henri},
  author={Mourgoglou, Mihalis},
  author={Orponen, Tuomas},
  title={Square functions with general measures II},
  journal={Indiana Univ. Math. J., to appear},
  eprint={http://arxiv.org/abs/1305.6865},
  date={2013},
,}

\bib{NTVa}{article}{
  author={Nazarov, Fedor},
  author={Treil, Sergei},
  author={Volberg, Alexander},
  title={Accretive system $Tb$-theorems on nonhomogeneous spaces},
  journal={Duke Math. J.},
  volume={113},
  date={2002},
  number={2},
  pages={259--312},
}

\bib{St}{article}{ 
   title={Topics in harmonic analysis related to the Littlewood--Paley theory},
   author={Stein, Elias M.},
   book={
   series={Annals of Mathematics Studies},
   publisher={Princeton University Press},
   place={Princeton, N.J.},
   date={1970},
},
,}
\end{biblist}
\end{bibdiv}

\end{document}